\title{\textbf{Some model theory of }$\Th(\mathbb{N},\cdot)$}
\author{Atticus Stonestrom\footnote{Email: \texttt{atticusstonestrom@yahoo.com}}}
\date{}
\begin{document}
\maketitle

\begin{abstract}
`Skolem arithmetic' is the complete theory $T$ of the multiplicative monoid $(\mathbb{N},\cdot)$. We give a full characterization of the $\varnothing$-definable stably embedded sets of $T$, showing in particular that, up to the relation of having the same definable closure, there is only one non-trivial one: the set of squarefree elements. We then prove that $T$ has weak elimination of imaginaries but not elimination of finite imaginaries.
\end{abstract}

\renewcommand{\abstractname}{Acknowledgements}
\begin{abstract}
 I would first like to thank Jamshid Derakhshan for his constant support and friendship, and for all of the math he has taught me over the years. I would also like to thank Alexi Block Gorman and James Hanson for their insights on Presburger arithmetic, Udi Hrushovski for a helpful discussion of the paper, and Emil Je{\v r}{\'a}bek, for his succinct and elegant post on MathOverflow summarizing a quantifier elimination result for $T$.

Finally, I would especially like to thank Alex Kruckman; his guidance and patience, not only over the course of this paper, but also during all of these past months as I have been learning about (neo-)stability theory, are simply impossible to overstate. I would like to thank him in particular for his great help in understanding the QE result for $T$, for pointing out an oversight in the proof of Lemma 3.2, for his observation that Fact B.2 follows from a theorem of Tarski, and for all of his feedback and encouragement.
\end{abstract}

\newpage

\section{Notation}
Throughout, $T$ denotes the complete theory of the multiplicative monoid $(\mathbb{N},\cdot,1)$, where we use $\mathbb{N}$ to mean the non-zero natural numbers. Fix $\mathbb{M}$ to be a monster model of $T$: concretely, we throughout consider $\mathbb{M}$ as a special structure \citep{baldwin_2015} of size $\beth_{\mu^+}(\mu)$, for some suitably large cardinal $\mu$, and we use the word `small' to refer to subsets of size $<\mu^+$. In particular, whenever we refer to a `parameter set $B\subset\mathbb{M}$', we mean that $B$ is small. Now, define a formula $v_1\mid v_2\equiv\exists w (v_2=v_1\cdot w)$, meaning `$v_1$ divides $v_2$', and define further a formula $$\pi(u)\equiv (u\neq 1)\wedge\forall w_1\forall w_2\left[u\mid (w_1\cdot w_2)\to(u\mid w_1\vee u\mid w_2)\right],$$ meaning `$u$ is prime'. We will refer to any realization of $\pi$ as a `prime', and we will denote by $\mathbb{P}$ the set $\pi(\mathbb{M})$ of all primes of $\mathbb{M}$. All other formulas and notation specific to $\mathbb{M}$ will be defined where they appear.

We use $\omega$ to refer to the natural numbers \textit{including} $0$, and restrict the symbol $\mathbb{N}$ to mean only the substructure $\N\prec\mathbb{M}$; we instead use $\omega$ when the natural numbers are employed as an index set. For $n\in\omega$, we use $[n]$ to denote the set $\{1,\dots,n\}$. We use $\subseteq$ for containment and $\subset$ for strict containment, and $|\overline{a}|$ for the length of a tuple $\overline{a}$. Given an equivalence relation $E\subseteq\mathbb{M}^n\times\mathbb{M}^n$, we write the $E$-equivalence class of a tuple $\overline{a}\in\mathbb{M}^n$ as $\overline{a}^E$. 

We often use `partitioned formulas' of the form $\phi(\overline{v};\overline{w})$, where $\overline{w}$ represents a parameter tuple. Given a tuple $\overline{c}\in\mathbb{M}^{|\overline{w}|}$ and a subset $X\subseteq\mathbb{M}$, we write $\phi(X^{|\overline{v}|};\overline{c})$ to mean the set of all tuples $\overline{a}\in X^{|\overline{v}|}$ such that $\mathbb{M}\models\phi(\overline{a};\overline{c})$. Finally, given tuples $\overline{a}_1$ and $\overline{a}_2$ and a parameter set $B\subset\mathbb{M}$, we write $\overline{a}_1\equiv_B\overline{a}_2$ to mean that $\tp(\overline{a}_1/B)=\tp(\overline{a}_2/B)$, and use $\overline{a}_1\equiv\overline{a}_2$ to abbreviate $\overline{a}_1\equiv_\varnothing\overline{a}_2$.


\section{Introduction}
Skolem arithmetic is in many regards a highly pathological theory, lying firmly in the `delta quadrant' of the model-theoretic universe. For instance, the formula $v\mid w$ has both the independence property, as it shatters any small subset of $\mathbb{P}$, and the strict order property, as it induces a partial ordering on $\mathbb{M}$ without maximal elements. Similarly, model-theoretic notions of independence behave poorly in the theory; forking and dividing do not coincide, and in fact types may fork over their domains.\footnote{As an example, if $r\in\mathbb{M}$ is any squarefree element with infinitely many prime factors, and $p\in\mathbb{P}$ is one of its prime factors, then $p\nind^f_{\{r\}}r$.} Our aim in this paper is to instead understand what kinds of \textit{tame} structure appear within $T$. To this end, we compute the $\varnothing$-definable stably embedded sets of $T$, and we show that $T$ admits weak elimination of imaginaries.

The paper proceeds as follows. In Section 2 we give a number of constructions and lemmas that provide the main tools for the rest of the paper. First, in Section 2.1, we describe a quantifier elimination result for $T$ noted in unpublished work of Emil Je{\v r}{\'a}bek; for completeness, we include a proof of this result in Appendix A. In Section 2.2 we discuss radical, or squarefree, elements; there is a natural two-to-one correspondence between the $\mathbb{M}$-definable subsets of $\mathbb{P}$ and the radical elements of $\mathbb{M}$, which allows us to employ radical elements as a means of coding definable sets of primes. In Section 2.3 we give a few lemmas on conjugate tuples, and in Section 2.4 we prove some results on definability; we discuss in particular a lexicographic ordering on $\mathbb{M}^n$ that allows us to obtain some control over the definable closure of a tuple. Finally, in Section 2.5, we discuss $\varnothing$-definable subsets of $\mathbb{M}$ that are cofinal with respect to divisibility, and prove that the definable closure of any such set is all of $\mathbb{M}$.

In Section 3 we investigate the theory's $\varnothing$-definable stably embedded sets, a full characterization of which we give in Theorem 3.4. In short, a $\varnothing$-definable subset $D\subseteq\mathbb{M}^n$ is stably embedded if and only if $\dcl D$ is one of $\{1\}$, $\mathbb{M}$, or the definable closure of the set of radical elements of $\mathbb{M}$. Thus, up to the relation of having the same definable closure, the only non-trivial stably embedded subset of $\mathbb{M}$ is the set of radical elements. Note that, by the results on cofinal sets in Section 2, every $\varnothing$-definable cofinal subset of $\mathbb{M}$ is stably embedded.

In Section 4 we investigate the theory's imaginary elements. We first give an easy example showing that the theory does not eliminate finite imaginaries; although any $\mathbb{M}$-definable subset of $\mathbb{P}$ \textit{can} be coded (by a suitable radical element), the next simplest case cannot be in general, and the set $\{p\cdot q,r\cdot s\}$ has no canonical parameter for any distinct primes $p,q,r,s$. With this done, the bulk of the section is then devoted to showing that $T$ nonetheless does have weak elimination of imaginaries, which we prove in Theorem 4.9. The approach is to reduce questions about sets definable over arbitrary parameters to questions about sets definable over radical tuples, which are generally more tractable; we do this using both results from Section 2 and the stable embeddedness of the set of radical elements.


\section{Lemmas and constructions}
\subsection{Quantifier elimination}

Define a formula $\operatorname{pow}_u(w)\equiv\pi(u)\wedge\forall  v\left[\pi(v)\wedge (v\mid w)\to (v=u)\right]$, expressing that $u$ is prime and the only possible prime divisor of $w$. We have the following theorem of $\mathbb{N}$: $$\mathbb{N}\models\forall u\forall v\big[\pi(u)\to\exists w\bigl[\pow_u(w)\wedge (w\mid v)\wedge\forall z\bigl(\pow_u(z)\wedge(z\mid v)\to (z\mid w)\bigl)\big]\big].$$ In other words, for every prime $u\in\mathbb{P}$ and every $v\in\mathbb{M}$, there exists an element $w$ that is the `highest power' of $u$ dividing $v$. Such a $w$ is unique with this property, and we will denote it by $v_{u^\infty}$, which we call the `$u$-part' of $v$. We also will denote by $u^\infty:=\pow_u(\mathbb{M})$ the set of all powers of $u$ in $\mathbb{M}$.

 Fix a prime $p\in\mathbb{P}$ and a formula $\phi(\overline{v})$. Define the `relativization to $p$' of $\phi$, denoted $\phi^p(\overline{v})$, by induction. If $\phi$ is atomic, take $\phi^p(\overline{v})$ to be the formula obtained by replacing every instance of $v_i$ in $\phi$ with $v_{i,p^\infty}$. Then let relativization to $p$ distribute over Boolean operations and existential quantification; for example, if $\phi(\overline{v})$ is of the form $\exists w\psi(\overline{v},w)$, then take $\phi^p(\overline{v})\equiv\exists w\psi^p(\overline{v},w)$. 

\begin{remark}
The `relativization to $p$' process simply replaces every (free or bound) instance of a variable in $\phi(\overline{v})$ with its $p$-part. In other words, $\phi^p(\overline{v})$ just expresses that $\phi(\overline{v}_{p^\infty})$ holds in the submonoid $p^\infty$ of $\mathbb{M}$. Note in particular that $\phi^p(\overline{v})$ is equivalent to $\phi^p(\overline{v}_{p^\infty})$.
\end{remark}

Now, given a formula $\phi(\overline{v})$ and a natural number $n\in\omega$, we can construct a corresponding formula $\exists^{\geqslant n}u\left[\pi(u)\wedge\phi^u(\overline{v})\right]$, which expresses that there are at least $n$ primes $u$ at which the $u$-relativization of $\phi(\overline{v})$ holds. In fact, every formula is equivalent modulo $T$ to a Boolean combination of formulas of this form; this is noted in unpublished work of Emil Je{\v r}{\'a}bek, who attributes it to a more general result of Mostowski. \citep{jerabek_2018} For completeness, we include a self-contained proof of this fact in Appendix A. To ease the notational strain, we will throughout the paper write these formulas and their negations as $\phi^{\geqslant n}(\overline{v})$ and $\phi^{< n}(\overline{v})$, respectively, and use $\phi^{=n}(\overline{v})$ to mean $\phi^{\geqslant n}(\overline{v})\wedge\phi^{\leqslant n}(\overline{v})$. We record here only the statement of the main QE result; for a proof see Appendix A.

\begin{manualtheorem}{A.4}
Any formula $\phi(\overline{v})$ is equivalent to a Boolean combination of formulas of the form $\theta^{\geqslant m}(\overline{v})$.$\hfill\blacksquare$
\end{manualtheorem}

\subsection{Radical elements}
For a tuple $\overline{a}$, we write $\sigma(\overline{a})$ to mean $\left\{p\in\mathbb{P}:\overline{a}_{p^\infty}\neq \overline{1}\right\}$, which we call the `support' of $\overline{a}$. Given two tuples $\overline{a}_1$ and $\overline{a}_2$, we write $\overline{a}_1\perp\overline{a}_2$ to mean that $\sigma(\overline{a}_1)\cap\sigma(\overline{a}_2)=\varnothing$; note that both $\sigma(\overline{v})$ for a variable tuple $\overline{v}$ and $\overline{v}_1\perp\overline{v}_2$ for variable tuples $\overline{v}_1$ and $\overline{v}_2$ are first-order expressible.

We will call an element $r\in\mathbb{M}$ squarefree, or radical, if $r_{p^\infty}\in\{1,p\}$, ie if $p^2\nmid r$, for every prime $p\in\mathbb{P}$. We have the following formula defining radical elements, $$\rad(v)\equiv\forall u\left[\pi(u)\to (v_{u^\infty}=1)\vee (v_{u^\infty}=u)\right],$$ and the following theorem of $T$: $\mathbb{N}\models\forall v\exists !w\left[\rad(w)\wedge \sigma(v)=\sigma(w)\right]$. In other words, for every $a\in\mathbb{M}$, there exists a unique radical element $r\in\mathbb{M}$ with $\sigma(r)=\sigma(a)$. (Indeed, for an arbitrary natural number $a\in\mathbb{N}$, the squarefree part of $a$ witnesses this sentence in $\mathbb{N}$ when $v=a$, which gives the desired result.) We will denote the element $r$ by $\sqrt{a}$, and we will write $\sqrt{\mathbb{M}}=\rad(\mathbb{M})$ to mean the set of all radical elements of $\mathbb{M}$. If one thinks of elements of $\mathbb{M}$ as coding families of powers of primes, then $\sqrt{a}$ can be thought of as containing \textit{precisely} the information of which primes are represented in $a$, and nothing more. Note that a radical element is completely determined by its prime support. Conversely, any $\mathbb{M}$-definable set of primes can be coded by a suitable radical element, in the following way:

\begin{lemma}
For any $\mathbb{M}$-definable set of primes $Q\subseteq\mathbb{P}$, there exists a radical element $r\in\mathbb{M}$ such that $\sigma(r)$ is one of $Q$ and $\mathbb{P}\setminus Q$.
\end{lemma}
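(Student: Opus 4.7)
The plan is to reduce from $\mathbb{M}$ to the standard model $\mathbb{N}$ by elementarity, and then handle $\mathbb{N}$ using the quantifier elimination of Theorem A.4. Fix a formula $\phi(v;\overline{w})$ defining $Q$ over parameters $\overline{c}\in\mathbb{M}$, WLOG with $\phi\vdash\pi(v)$. The content of the lemma for the pair $(\phi,\overline{c})$ is captured, uniformly in $\overline{c}$, by the single first-order sentence
$$\sigma_\phi\equiv\forall\overline{w}\,\exists r\bigl[\rad(r)\wedge\bigl(\forall p[\pi(p)\to(p\mid r\leftrightarrow\phi(p,\overline{w}))]\vee\forall p[\pi(p)\to(p\mid r\leftrightarrow\neg\phi(p,\overline{w}))]\bigr)\bigr],$$
and by $\mathbb{N}\prec\mathbb{M}$ it suffices to verify $\sigma_\phi$ in $\mathbb{N}$. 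Since the supports of radical elements in $\mathbb{N}$ are exactly the finite subsets of $\pi(\mathbb{N})$ (realised as their products), the task becomes: every $\mathbb{N}$-definable subset of $\pi(\mathbb{N})$ is finite or cofinite.

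For this, Theorem A.4 together with the Boolean algebra structure of the finite-or-cofinite sets reduces the problem to checking that each set $\{p\in\pi(\mathbb{N}):\theta^{\geqslant m}(p,\overline{c})\}$ is finite or cofinite. The key unpacking is that $\theta^u(p,\overline{c})$ evaluates to $\theta(p,\overline{c}_{p^\infty})$ when $u=p$, and to $\theta(1,\overline{c}_{u^\infty})$ when $u\neq p$, since $p_{u^\infty}$ is $p$ or $1$ accordingly. Writing $R=\{u:\theta(1,\overline{c}_{u^\infty})\}$ and $S=\{p:\theta(p,\overline{c}_{p^\infty})\}$, the set of $u$-witnesses for $\theta^{\geqslant m}(p,\overline{c})$ differs from $R$ by at most $\{p\}$, and a brief case split on $|R|$ versus $m$ yields that the definable set in question is always one of $\pi(\mathbb{N})$, $\varnothing$, $\pi(\mathbb{N})\setminus(R\setminus S)$, or $S\setminus R$.

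It then remains to verify that $R$ and $S$ are themselves finite or cofinite in $\mathbb{N}$. For $R$: since $\sigma(\overline{c})$ is finite in $\mathbb{N}$, $\overline{c}_{u^\infty}=\overline{1}$ off a finite set of primes, so $\theta(1,\overline{c}_{u^\infty})$ is eventually the constant $\theta(1,\overline{1})$, making $R$ finite or cofinite accordingly. For $S$, the same argument applies once one observes that in $\mathbb{N}$ all primes share a common type over $\varnothing$ (via prime-permuting automorphisms of $(\mathbb{N},\cdot)$), so $\theta(p,\overline{1})$ is constant in $p\in\pi(\mathbb{N})$. I expect the main obstacle to be the bookkeeping in the case analysis on $|R|$ versus $m$: while conceptually easy, enumerating the four ranges ($|R|>m$, $|R|=m$, $|R|=m-1$, $|R|<m-1$) takes some care. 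Once past this, both the finite/cofinite conclusion in $\mathbb{N}$ and the elementary transfer back up to $\mathbb{M}$ are routine.
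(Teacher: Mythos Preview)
Your proof is correct, modulo a small notational slip: what you write as $\theta(p,\overline{c}_{p^\infty})$ and $\theta(1,\overline{c}_{u^\infty})$ should really be $\theta^p(p,\overline{c}_{p^\infty})$ and $\theta^u(1,\overline{c}_{u^\infty})$, since relativization also restricts bound variables. This does not affect your argument, because the finite/cofinite conclusion for $R$ and $S$ only needs that $\theta^u(1,\overline{1})$ and $\theta^p(p,\overline{1})$ are constant across primes, which follows from the same automorphism observation you already invoke.

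The paper, however, takes a more direct and elementary route that avoids Theorem~A.4 altogether. After the same reduction to $\mathbb{N}$, the paper argues directly: for any formula $\theta$ and any $\overline{c}\in\mathbb{N}$, the set $P=\{p\in\mathbb{P}\cap\mathbb{N}:\mathbb{N}\models\theta(p,\overline{c})\}$ is either contained in the finite set $\sigma(\overline{c})$, or contains some prime $p\notin\sigma(\overline{c})$, in which case prime-permuting automorphisms of $\mathbb{N}$ fixing $\overline{c}$ force every prime outside $\sigma(\overline{c})$ into $P$, making the complement finite. Your approach has the virtue of being explicit about the shape of the definable set (pinning it down to one of four concrete Boolean combinations), but it leans on the full QE machinery and the case analysis you anticipated; the paper's two-line automorphism argument is shorter, self-contained, and logically prior to the QE theorem.
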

\begin{proof}
Use $\neg_0$ denote the string `$\neg$' and $\neg_1$ to denote the empty string. For a fixed formula $\theta(u;\overline{w})$, we show the following:
 $$\mathbb{N}\models\forall \overline{w}\exists v\left[\rad(v)\wedge\bigvee_{i=0}^1\forall u\left[\pi(u)\to \big((u\mid v)\leftrightarrow \neg_i\theta(u;\overline{w})\big)\right]\right].$$ To see this, let $\overline{c}\in\mathbb{N}^{|\overline{w}|}$ be arbitrary, and consider the set $P:=\left\{p\in\mathbb{P}\cap\mathbb{N}:\mathbb{N}\models\theta(p,\overline{c})\right\}$. We have two cases; first suppose that $P\subseteq\sigma(\overline{c})$. Then in particular, since elements of $\mathbb{N}$ have finite support, $P$ is finite, so taking $v=\prod P\in\mathbb{N}$ and $i=1$ gives the desired result. Otherwise, suppose there is a prime $p\in P\setminus\sigma(\overline{c})$. Then, for every $q\in\mathbb{P}\cap\mathbb{N}$ not in $\sigma(\overline{c})$, there is an automorphism of $\mathbb{N}$ taking $p$ to $q$ and fixing the (prime factors of) $\overline{c}$, whence $q\in P$. So the complement of $P$ in $\mathbb{P}\cap\mathbb{N}$ is a subset of $\sigma(\overline{c})$, and is therefore again finite, so taking $v=\prod\left[(\mathbb{P}\cap\mathbb{N})\setminus P\right]\in\mathbb{N}$ and $i=0$ gives the desired result.
\end{proof}

There is thus a natural two-to-one correspondence from the $\mathbb{M}$-definable sets of primes onto $\sqrt{\mathbb{M}}$. We can avoid worrying about complements if we restrict our attention to the $\mathbb{M}$-definable subsets of $\sigma(\overline{a})$ for some tuple $\overline{a}$; we will use the notation introduced in the following remark essentially without mention throughout the rest of the paper.

\begin{remark}
If $Q$ is any $\mathbb{M}$-definable set of primes, and $\overline{a}$ is any tuple, then there exists a tuple $\overline{a}_Q$ such that, for any $p\in\mathbb{P}$, we have $\overline{a}_{Q,p^\infty}=\overline{a}_{p^\infty}$ if $p\in Q$ and $\overline{a}_{Q,p^\infty}=\overline{1}$ if $p\notin Q$. We abbreviate $\overline{a}_{\mathbb{P}\setminus Q}$ by writing $\overline{a}_{\neg Q}$, and we abuse notation by writing $$\overline{a}_Q=\prod\left\{\overline{a}_{q^\infty}:q\in Q\right\}\in\mathbb{M}^{|\overline{a}|}.$$ As a particular case, if there exists a tuple $\overline{a}$ such that $Q\subseteq\sigma(\overline{a})$, then there exists a radical element $r\in\sqrt{\mathbb{M}}$ with $\sigma(r)=Q$; we similarly abuse notation by writing $r=\prod Q$.
\end{remark}
\subsection{Conjugates}
We collect here a few easy criteria for determining and constructing conjugate tuples.

\begin{lemma} Suppose $\overline{a}_1,\overline{a}_2$ and $\overline{c}_1,\overline{c}_2$ are tuples, and that $\overline{a}_1\perp\overline{c}_1$ and $\overline{a}_2\perp\overline{c}_2$. If $\overline{a}_1\equiv\overline{a}_2$ and $\overline{c}_1\equiv\overline{c}_2$, then $\overline{a}_1\overline{c}_1\equiv\overline{a}_2\overline{c}_2$.
\end{lemma}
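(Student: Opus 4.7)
The plan is to apply Theorem A.4 to reduce the claim to showing, for every formula $\theta(\overline{v}, \overline{w})$ and every $m \in \omega$, the equivalence $\theta^{\geqslant m}(\overline{a}_1, \overline{c}_1) \iff \theta^{\geqslant m}(\overline{a}_2, \overline{c}_2)$. Fix $\theta$ and $m$. The key use of perp is that, for each prime $u \in \mathbb{P}$, at most one of $\overline{a}_{i, u^\infty}$ and $\overline{c}_{i, u^\infty}$ differs from $\overline{1}$; hence $\theta^u(\overline{a}_i, \overline{c}_i)$ is equivalent to $\theta^u(\overline{a}_i, \overline{1})$, $\theta^u(\overline{1}, \overline{c}_i)$, or $\theta^u(\overline{1}, \overline{1})$ depending on whether $u \in \sigma(\overline{a}_i)$, $u \in \sigma(\overline{c}_i)$, or neither. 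Since every $u^\infty$ is isomorphic to $(\omega, +)$ as a monoid, the truth value of $\theta^u(\overline{1}, \overline{1})$ does not depend on $u$.

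I then partition $\{u \in \mathbb{P} : \theta^u(\overline{a}_i, \overline{c}_i)\}$ as $A_i \sqcup B_i \sqcup C_i$ according to the three cases above. For each $k$, the condition $|A_i| \geqslant k$ is first-order definable in $\overline{a}_i$ alone, hence preserved by $\overline{a}_1 \equiv \overline{a}_2$; similarly $|B_i| \geqslant k$ is preserved by $\overline{c}_1 \equiv \overline{c}_2$. The third piece $C_i$ is either empty (when $(\omega, +) \not\models \theta(\overline{1}, \overline{1})$) or equal to $\mathbb{P} \setminus (\sigma(\overline{a}_i) \cup \sigma(\overline{c}_i))$. In the empty case, $\theta^{\geqslant m}(\overline{a}_i, \overline{c}_i) \iff |A_i| + |B_i| \geqslant m$, which unfolds as the finite disjunction $\bigvee_{j_1 + j_2 = m} (|A_i| \geqslant j_1 \wedge |B_i| \geqslant j_2)$, and the preservation follows term by term.

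For the nonempty case I will appeal to the observation that, for every $m$, the sentence asserting the existence of $\geqslant m$ primes dividing no coordinate of $\overline{v}$ or $\overline{w}$ already holds in $\mathbb{N}$, since tuples from $\mathbb{N}$ have finite support while $\mathbb{P} \cap \mathbb{N}$ is infinite. This sentence therefore belongs to $T$ and holds in $\mathbb{M}$, so $|C_i| \geqslant m$ automatically on both sides, making $\theta^{\geqslant m}(\overline{a}_i, \overline{c}_i)$ true uniformly in $i$. The step I anticipate as subtlest is recognizing this uniform lower bound on $|C_i|$ via a sentence of $T$; without it, one would be driven into a delicate cardinal-arithmetic argument about $|\mathbb{P} \setminus (\sigma(\overline{a}) \cup \sigma(\overline{c}))|$, which is not obviously determined by the individual types of $\overline{a}$ and $\overline{c}$ together with perp.
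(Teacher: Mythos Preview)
Your proof is correct and follows essentially the same route as the paper's: reduce via Theorem~A.4, dispose of the case $\theta^p(\overline{1},\overline{1})$ first, then in the remaining case split the witnessing primes according to whether they lie in $\sigma(\overline{a}_i)$ or $\sigma(\overline{c}_i)$; the paper pushes individual primes forward along automorphisms $f,g$ with $f(\overline{a}_1)=\overline{a}_2$ and $g(\overline{c}_1)=\overline{c}_2$, whereas you instead observe that the counts $|A_i|\geqslant k$ and $|B_i|\geqslant k$ are themselves formulas in $\overline{a}_i$ (resp.\ $\overline{c}_i$) alone. One small correction: in the monster model $\mathbb{M}$ the submonoid $u^\infty$ need not be \emph{isomorphic} to $(\omega,+)$, only elementarily equivalent to it, but your conclusion that the truth of $\theta^u(\overline{1},\overline{1})$ is independent of $u$ still holds since any two primes of $\mathbb{M}$ are conjugate.
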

\begin{proof}
Let $\phi(\overline{v},\overline{w})$ be any formula; by Theorem A.4, we may assume that $\phi$ is of form $\theta^{\geqslant n}(\overline{v},\overline{w})$ for some formula $\theta$ and $n\in\omega$. Suppose that $\theta^{\geqslant n}(\overline{a}_1,\overline{c}_1)$ holds; we will show that $\theta^{\geqslant n}(\overline{a}_2,\overline{c}_2)$ does too. If $\mathbb{M}\models\theta^p(\overline{1},\overline{1})$ for some (and hence any) $p\in\mathbb{P}$, then the result is immediate, so assume this is not the case, and let $p_1,\dots,p_n$ be any distinct primes such that $\mathbb{M}\models\theta^{p_i}(\overline{a}_1,\overline{c}_1)$ for each $i\in[n]$. Since $\mathbb{M}\models\neg\theta^p(\overline{1},\overline{1})$ and $\overline{a}_1\perp\overline{c}_1$, for each $i\in[n]$ precisely one of $p_i\in\sigma(\overline{a}_1)$ and $p_i\in\sigma(\overline{c}_1)$ holds. Assume without loss of generality that $n'\in[n+1]$ is such that, for any $i\in[n]$, we have $p_i\in\sigma(\overline{a}_1)$ if $i<n'$ and $p_i\in\sigma(\overline{c}_1)$ if $i\geqslant n'$. Also, by the theorem hypotheses, let $f,g$ be any automorphisms such that $f(\overline{a}_1)=\overline{a}_2$ and $g(\overline{c}_1)=\overline{c}_2$. Since $\overline{a}_2$ and $\overline{c}_2$ are coprime, we have $\{f(p_1),\dots,f(p_{n'-1})\}\perp\{g(p_{n'}),\dots,g(p_{n})\}$, and it thus suffices to show $\mathbb{M}\models\theta^{f(p_i)}(\overline{a}_2,\overline{c}_2)$ and $\mathbb{M}\models\theta^{g(p_j)}(\overline{a}_2,\overline{c}_2)$ for each $i<n'$ and $j\geqslant n'$. We show the former, as the latter case is identical; fix $i<n'$. Since $\overline{a}_1\perp\overline{c}_1$ and $p_i\in\sigma(\overline{a}_1)$, by Remark 2.1 we have $\mathbb{M}\models\theta^{p_i}(\overline{a}_1,\overline{1})$; pushing forward along $f$ tells us $\mathbb{M}\models\theta^{f(p_i)}(\overline{a}_2,\overline{1})$. But $\overline{a}_2\perp\overline{c}_2$, and $f(p_i)\in\sigma(\overline{a}_2)$, so again by Remark 2.1 we have $\mathbb{M}\models\theta^{f(p_i)}(\overline{a}_2,\overline{c}_2)$, as desired.
\end{proof}

\begin{lemma}
Suppose $r_1,r_2\in\sqrt{\mathbb{M}}$. If $\sigma(r_1)$ and $\sigma(r_2)$ are infinite, then $r_1\equiv r_2$.
\end{lemma}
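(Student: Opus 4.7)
My plan is to apply Theorem A.4 to reduce the problem to showing that $\theta^{\geqslant m}(r_1)\leftrightarrow\theta^{\geqslant m}(r_2)$ for every formula of this form, and then to carry out a case analysis on how $\theta^p(r)$ behaves when $r$ is radical.

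Since $r_{p^\infty}\in\{1,p\}$ for any radical $r$, Remark 2.1 gives that $\theta^p(r)$ coincides with $\theta^p(p)$ when $p\in\sigma(r)$ and with $\theta^p(1)$ when $p\notin\sigma(r)$. Any two primes of $\mathbb{M}$ are $\varnothing$-conjugate (already witnessed in $\mathbb{N}$ by the automorphism permuting them), so the truth values of $\theta^p(p)$ and $\theta^p(1)$ do not depend on the choice of $p\in\mathbb{P}$. This splits the situation into four cases. If $\theta^p(p)\leftrightarrow\theta^p(1)$, then the truth value of $\theta^{\geqslant m}(r)$ is independent of $r$ altogether. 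If $\theta^p(p)$ holds and $\theta^p(1)$ fails, then $\theta^{\geqslant m}(r)\leftrightarrow|\sigma(r)|\geqslant m$, which holds for both $r_1$ and $r_2$ by hypothesis. In the remaining case, $\theta^{\geqslant m}(r)$ reduces to the statement $|\mathbb{P}\setminus\sigma(r)|\geqslant m$.

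The main subtlety — and the step I would want to flag — is handling this last case: it requires the observation that $\mathbb{P}\setminus\sigma(v)$ is infinite for \emph{every} $v\in\mathbb{M}$, not just for radical elements with infinite support. This is immediate by elementarity, since for each $m\in\omega$ the sentence $\forall v\,\exists^{\geqslant m}u[\pi(u)\wedge u\nmid v]$ holds in $\mathbb{N}$ (as every element of $\mathbb{N}$ has finite prime support) and therefore in $\mathbb{M}$. With this in hand, the last case also yields $\theta^{\geqslant m}(r_1)\leftrightarrow\theta^{\geqslant m}(r_2)$, closing the proof.
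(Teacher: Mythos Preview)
Your proof is correct and follows essentially the same approach as the paper's: reduce via Theorem~A.4 to formulas $\theta^{\geqslant m}(v)$, observe that for radical $r$ the value of $\theta^p(r)$ depends only on whether $p\in\sigma(r)$, and use that $\theta^p(1)$ and $\theta^p(p)$ are each independent of the choice of $p$. The paper's proof is terser and leaves the case analysis implicit (in particular, it does not explicitly address the point you flag about $\mathbb{P}\setminus\sigma(r)$ being infinite), but your more detailed version is a faithful elaboration of the same argument.
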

\begin{proof}
This is essentially an immediate corollary of Theorem A.4; we need only show that $\theta^{\geqslant n}(r_1)$ holds if and only if $\theta^{\geqslant n}(r_2)$ holds for any formula $\theta$ and $n\in\omega$. But the sentence $\theta^p(1)$ (respectively $\theta^p(p)$) holds for some $p\in\mathbb{P}$ if and only if $\theta^q(1)$ (respectively $\theta^q(q)$) holds for every $q\in\mathbb{P}$, whence the result follows.
\end{proof}

\begin{lemma}
If $\overline{a}$ is any tuple and $B\subset\mathbb{M}$ is any (small) parameter set, then there exists $\overline{a}'\in\mathbb{M}^{|\overline{a}|}$ a realization of $\tp(\overline{a})$ such that $\overline{a}'\perp B$.
\end{lemma}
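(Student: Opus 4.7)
The approach is to realize, by saturation of $\mathbb{M}$, the partial type
$$\Sigma(\overline{v}) \;:=\; \tp(\overline{a}/\varnothing) \;\cup\; \{\overline{v}\perp b : b\in B\}.$$
Since $|\Sigma|\leqslant|T|+|B|<\mu^+$, it suffices to show $\Sigma$ is consistent. By compactness this reduces to showing that for any finite $\Delta\subseteq\tp(\overline{a}/\varnothing)$ and any single $b\in\mathbb{M}$ (obtained as a product $b_1\cdots b_k$ of elements of $B$, using that $\overline{v}\perp b_i$ for all $i$ is equivalent to $\overline{v}\perp b$), the set $\Delta\cup\{\overline{v}\perp b\}$ is satisfiable in $\mathbb{M}$; and by Theorem~A.4 I may assume each formula of $\Delta$ has the form $\theta^{\geqslant n}(\overline{v})$ or $\theta^{<n}(\overline{v})$.

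The construction proceeds in two steps. First, I would exhibit a finite ``truncation'' $\overline{a}_P$ (for a finite $P\subseteq\sigma(\overline{a})$, in the sense of Remark~2.2) satisfying $\Delta$. For each positive formula $\theta_i^{\geqslant n_i}(\overline{v})\in\Delta$: either $\theta_i^p(\overline{1})$ holds for some (hence every) prime $p$, in which case the formula is automatically witnessed by primes outside $P$, or else every witness for $\theta_i^p(\overline{a})$ lies in $\sigma(\overline{a})$ and I may add $n_i$ of them to $P$. For each negative formula $\theta_j^{<m_j}(\overline{v})\in\Delta$: the finite bound forces $\theta_j^p(\overline{1})$ to fail, since otherwise $\overline{a}$ would have infinitely many witnesses outside $\sigma(\overline{a})$. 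A routine check using these observations shows $\overline{a}_P$ satisfies $\Delta$. Second, by elementarity $\mathbb{N}\models\forall v\,\exists^{\geqslant n}u\,(\pi(u)\wedge\neg(u\mid v))$ for each $n$, so $\mathbb{P}\setminus\sigma(b)$ is infinite; pick distinct primes $q_1,\ldots,q_{|P|}$ within it and enumerate $P=\{p_1,\ldots,p_{|P|}\}$. An easy induction using Lemma~2.1, with base case that single primes are $\varnothing$-conjugate (which follows from Theorem~A.4, since for a prime $p$ and formula $\theta$ the truth of $\theta^q(p)$ depends only on whether $q=p$), shows $(p_1,\ldots,p_{|P|})\equiv(q_1,\ldots,q_{|P|})$, so by homogeneity of $\mathbb{M}$ there is an automorphism $g$ with $g(p_i)=q_i$. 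Then $\overline{a}':=g(\overline{a}_P)$ has support $\{q_1,\ldots,q_{|P|}\}\subseteq\mathbb{P}\setminus\sigma(b)$, whence $\overline{a}'\perp b$, and $\overline{a}'\equiv\overline{a}_P$ satisfies $\Delta$.

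The main subtlety is the truncation step: verifying that $\overline{a}_P$ satisfies $\Delta$ requires the case analysis of $\theta^p(\overline{1})$, which controls whether ``spurious'' witnesses appear at primes outside $\sigma(\overline{a})$ when passing from $\overline{a}$ to $\overline{a}_P$. Once this is done, the remainder is a routine combination of saturation, compactness, and the conjugacy of tuples of distinct primes granted by Lemma~2.1.
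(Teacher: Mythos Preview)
Your argument is correct but takes a more laborious route than the paper. The paper's proof is quite short: after the same compactness reduction to a single formula $\phi(\overline{v})\in\tp(\overline{a})$ and finitely many parameters, it transfers the problem entirely to the standard model $\mathbb{N}$ by observing that the first-order sentence $\forall\overline{w}\,\exists\overline{v}\,[\phi(\overline{v})\wedge\bigwedge_i\overline{v}\perp w_i]$ holds there. In $\mathbb{N}$ this is immediate, since every element already has finite support, so any realization $\overline{e}$ of $\phi$ can be moved by an automorphism of $\mathbb{N}$ onto a set of primes disjoint from $\sigma(\overline{w})$. Your approach instead works directly in $\mathbb{M}$: you invoke Theorem~A.4 to reduce $\Delta$ to formulas of shape $\theta^{\geqslant n}$ or $\theta^{<n}$, then carry out a case analysis on whether $\theta^p(\overline{1})$ holds in order to hand-build a finite-support truncation $\overline{a}_P$ still satisfying $\Delta$, and finally move $\overline{a}_P$ by an automorphism. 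The truncation step and its case analysis are precisely what the paper's transfer-to-$\mathbb{N}$ trick gives for free, since in $\mathbb{N}$ every tuple is already its own finite truncation. Your route is more explicit about the mechanics inside $\mathbb{M}$ and does not rely on the ``verify in $\mathbb{N}$'' device, but the paper's proof is shorter and, notably, does not need the quantifier elimination result at all.

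(One small numbering mismatch to fix if you align with the paper: the coprime-combination lemma you call ``Lemma~2.1'' is Lemma~2.4, and the truncation notation $\overline{a}_P$ is introduced in Remark~2.3.)
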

\begin{proof}
By compactness, and since $B$ is small, it suffices to replace $\tp(\overline{a})$ with a single formula $\phi(\overline{v})\in\tp(\overline{a})$ and to assume that $B$ is finite, say of form $\{b_1,\dots,b_n\}$. Now the desired result follows from the following theorem of $\mathbb{N}$: $$\mathbb{N}\models\forall w_1\dots\forall w_n\exists \overline{v}\left[\phi(\overline{v})\wedge\bigwedge_{i\in[n]}\overline{v}\perp w_i\right].$$ To see this, let $\overline{e}$ be any realization of $\phi(\overline{v})$ in $\mathbb{N}$ and $\overline{c}$ any instantiation of $\overline{w}$ in $\mathbb{N}$. Choose any set of primes $P\subset\mathbb{P}\cap\mathbb{N}$ such that $|P|=|\sigma(\overline{e})|$ and such that $\sigma(\overline{c})\perp P$, and let $\overline{e}'$ be the image of $\overline{e}$ under any automorphism of $\mathbb{N}$ taking $\sigma(\overline{e})$ to $P$. Now taking $\overline{v}=\overline{e}'$ gives a realization of $\phi(\overline{v})$ disjoint from $\overline{c}$, as desired.
\end{proof}

The following lemma is convenient when thinking about $\varnothing$-definable cofinal sets.

\begin{lemma}
Let $a\in\mathbb{M}$, and suppose there exist infinitely many $p\in\mathbb{P}$ such that $a_{p^\infty}\neq p^n$ for any $n\in\omega$. Then, for any $c\in\mathbb{M}$, there exists a conjugate $a'\equiv a$ such that $c\mid a'$.
\end{lemma}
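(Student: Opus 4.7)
The hypothesis furnishes an infinite set $P_\infty := \{p \in \mathbb{P} : a_{p^\infty} \neq p^n \text{ for any } n \in \omega\}$; note $P_\infty \subseteq \sigma(a)$. My plan is to construct a conjugate $a'$ via a \emph{prime swap}: modify $a$ at the primes in $\sigma(c)$ to arrange divisibility by $c$, compensating by a corresponding modification at equally many primes drawn from $P_\infty$. Writing $\sigma(c) = \{q_1, \ldots, q_k\}$ and $d_i := c_{q_i^\infty}$, I first fix distinct $p_1, \ldots, p_k \in P_\infty \setminus \sigma(c)$, which is possible since $P_\infty$ is infinite and $\sigma(c)$ is finite.

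For each $i \in [k]$, I need elements $x_i \in p_i^\infty$ and $y_i \in q_i^\infty$ satisfying $(p_i, x_i) \equiv (q_i, a_{q_i^\infty})$ and $(q_i, y_i) \equiv (p_i, a_{p_i^\infty})$, and additionally $d_i \mid y_i$. Existence of $x_i$ is easy: any two primes of $\mathbb{M}$ are conjugate over $\varnothing$ (a short calculation via Theorem A.4, since for a prime $p$ and a formula $\theta$, the cardinality of $\{u \in \mathbb{P} : \theta^u(p)\}$ depends only on whether $\theta(1)$ and $\theta(v)$ hold of $v=1$ within a submonoid $u^\infty$), so I fix any automorphism $f_i$ sending $q_i \mapsto p_i$ and set $x_i := f_i(a_{q_i^\infty})$.

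Existence of $y_i$ is the main obstacle. Fixing any automorphism $g_i$ sending $p_i \mapsto q_i$, let $z_i := g_i(a_{p_i^\infty}) \in q_i^\infty$; I want $y_i$ realizing $\tp(z_i/q_i)$ together with $d_i \mid y_i$. By hypothesis $z_i \neq q_i^n$ for any $n \in \omega$, so $\tp(z_i/q_i)$ implies $v \neq q_i^n$ for every $n \in \omega$. Consistency with $d_i \mid v$ then follows by saturation of $\mathbb{M}$, via the Presburger-style fact that any $\{q_i\}$-definable subset of $q_i^\infty$ containing a nonstandard power of $q_i$ is cofinal in $q_i^\infty$ with respect to divisibility.

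Finally, let $P' := \{p_1, \ldots, p_k, q_1, \ldots, q_k\}$ and define
$$a' := a_{\neg P'} \cdot \prod_{i=1}^k x_i \cdot \prod_{i=1}^k y_i,$$
which is well-defined since the factors have pairwise disjoint supports. By construction, $d_i \mid a'_{q_i^\infty} = y_i$ for each $i$, so $c \mid a'$. To verify $a' \equiv a$, apply Theorem A.4: for any $\theta$ and $n$, the formula $\theta^{\geq n}(v)$ counts primes $p$ at which $\theta^p(v)$ holds. For $p \notin P'$, $a_{p^\infty} = a'_{p^\infty}$ and hence $\theta^p(a) \iff \theta^p(a')$ by Remark 2.1; within $P'$, the bijection $p_i \leftrightarrow q_i$ carries $\{p \in P' : \theta^p(a)\}$ onto $\{p \in P' : \theta^p(a')\}$ by the choices of $x_i$ and $y_i$. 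Hence $|\{p : \theta^p(a)\}| = |\{p : \theta^p(a')\}|$, giving $a' \equiv a$.
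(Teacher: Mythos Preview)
Your argument contains a genuine gap: you write $\sigma(c)=\{q_1,\ldots,q_k\}$ and explicitly invoke that ``$\sigma(c)$ is finite.'' In the monster model $\mathbb{M}$ this is false in general; elements of $\mathbb{M}$ may have infinite prime support (indeed, Lemma~2.5 discusses radical elements with infinite support, and Lemma~2.2 produces such elements). Your entire prime-swap construction---choosing finitely many $p_1,\ldots,p_k\in P_\infty\setminus\sigma(c)$, building $x_i,y_i$, and assembling $a'$ as a finite product---collapses once $\sigma(c)$ is infinite. The Presburger step and the final QE count are fine where they apply, but they only cover $c$ of finite support.

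The fix is not hard, but it is missing: one additional compactness step shows that it suffices to realize $\tp(a)\cup\{c\mid v\}$, and for that it is enough to handle each formula in $\tp(a)$ together with each \emph{finite-support} divisor of $c$; your construction then applies. The paper sidesteps the issue entirely by reversing the roles of $a$ and $c$: after an automorphism, one seeks a realization of $\tp(c)$ dividing $a$, and compactness reduces this to realizing a single formula $\theta(v)\in\tp(c)$. Any such $\theta$ already has a realization $q_1^{k_1}\cdots q_n^{k_n}\in\mathbb{N}$, and one simply moves the $q_i$ onto $n$ primes from $P_\infty$. This avoids both the infinite-support problem and the Presburger cofinality argument.
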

\begin{proof}
After an automorphism, it suffices to find a realization of $\tp(c)$ that divides $a$. By compactness, we need only find a realization of a single formula $\theta(v)\in\tp(c)$ that divides $a$. Since $\mathbb{M}\models\exists v\theta(v)$, also $\mathbb{N}\models\exists v\theta(v)$, so let $q_1^{k_1}\cdot\ldots\cdot q_n^{k_n}$ be a realization of $\theta$ in $\mathbb{N}$, where the $q_i$ are distinct primes and the $k_i\in\omega$. By hypothesis, there exist primes $p_1,\dots,p_n\in\sigma(a)$ such that $a_{p_i^\infty}\neq p_i^k$ for any $k\in\omega$ and $i\in[n]$. In particular, each $p_i^{k_i}$ divides $a_{p_i^\infty}$, so $p_1^{k_1}\cdot\ldots\cdot p_n^{k_n}$ is a realization of $\theta$ dividing $a$, as desired.
\end{proof}

\subsection{Definability}

Given a tuple $\overline{a}\in\mathbb{M}^n$, denote its components by $a_1\dots a_n$, and define a partial ordering $\preccurlyeq$ on $n$-tuples by taking $\overline{a}^1\preccurlyeq\overline{a}^2$ to hold if and only if $a^1_m\mid a^2_m$, where $m\in[n+1]$ is minimal such that $a^1_m\neq a^2_m$. As usual, write $\overline{a}^1\prec\overline{a}^2$ to mean that $\overline{a}^1\preccurlyeq\overline{a}^2$ and $\overline{a}^1\neq\overline{a}^2$. This relation preserves several properties of divisibility – for example, to check that $\overline{a}^1\preccurlyeq\overline{a}^2$ it suffices to show that $\overline{a}^1_{p^\infty}\preccurlyeq\overline{a}^2_{p^\infty}$ for every $p\in\mathbb{P}$ – but it has the advantage that it descends to a total ordering on $(p^\infty)^n\subset\mathbb{M}^n$ even when $n>1$.

\begin{lemma}
Any non-empty $\mathbb{M}$-definable set $D\subseteq\mathbb{M}^n$ has $\preccurlyeq$-minimal elements: tuples $\overline{d}\in D$ such that, for any $\overline{e}\in D$, if $\overline{e}\preccurlyeq\overline{d}$ then $\overline{e}=\overline{d}$.
\end{lemma}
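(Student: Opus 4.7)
The plan is by induction on $n$. For $n>1$, the argument is a routine projection-and-fiber: the projection $\pi_1(D)\subseteq\mathbb{M}$ is non-empty and definable, so by the base case it has a divisibility-minimum $d_1$; the fiber $D_{d_1}=\{\overline{a}'\in\mathbb{M}^{n-1}:(d_1,\overline{a}')\in D\}$ is non-empty and definable over $d_1$, so the inductive hypothesis yields a $\preccurlyeq$-minimum $\overline{a}'\in D_{d_1}$; and $(d_1,\overline{a}')$ is then $\preccurlyeq$-minimal in $D$, by a short case analysis on the first coordinate at which a putative strict $\preccurlyeq$-predecessor could differ (coordinate $1$ would contradict minimality of $d_1$ in $\pi_1(D)$, and a later coordinate would contradict minimality of $\overline{a}'$ in $D_{d_1}$).

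The main obstacle is the base case $n=1$, where $\preccurlyeq$ coincides with divisibility on $\mathbb{M}$. I would use Theorem A.4 to write the defining formula as a Boolean combination of formulas $\theta^{\geqslant m}(v)$, each a counting condition on the primes $p$ at which $\theta^p$ holds of the $p$-part. A key preliminary observation is that each $\theta^{\geqslant m}(v)$ restricted to $v\in p^\infty$ reduces to a formula in $p^\infty$ itself: since $\theta^q(1)$ has constant truth value across all primes $q$, the relativizations at $q\neq p$ contribute only a constant to the count, leaving $\theta^p(v)$ as the sole variable contribution. So $p^\infty$ is stably embedded in $\mathbb{M}$, and each projection $D_p=\{v_{p^\infty}:v\in D\}\subseteq p^\infty$ is Presburger-definable and therefore has a divisibility-minimum $m_p$.

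The delicate point is that the ``product'' $\prod_p m_p$ need not lie in $D$, since the counting conditions couple distinct primes — for instance, with $D=\{p,q\}$ for two fixed primes one has $m_r=1$ for every prime $r$, but $1\notin D$ and the global minima are $p$ and $q$ individually. To assemble a genuine minimum I would fix $d_0\in D$ and restrict to divisors of $d_0$, then partition the primes of $\sigma(d_0)$ into finitely many classes according to the local behavior of each of the finitely many $\theta$'s appearing in the Boolean combination; within each class the primes are interchangeable for the purposes of the counting conditions, so one can choose the non-trivial $p$-parts class by class so as to exactly meet each counting condition with minimal contribution, obtaining an element of $D$ whose $p$-parts are simultaneously as small as possible.
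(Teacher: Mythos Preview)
You have missed the one-line argument that the paper uses. The property ``every non-empty instance of $\phi(\overline{v};\overline{w})$ has a $\preccurlyeq$-minimal element'' is first-order expressible (uniformly in $\overline{w}$), and it holds in $\mathbb{N}$ simply because $(\mathbb{N},\mid)$ is well-founded, hence $\preccurlyeq$ is well-founded on $\mathbb{N}^n$. Since $T=\Th(\mathbb{N},\cdot)$, the same sentence holds in $\mathbb{M}$. That is the entire proof; no quantifier elimination, no Presburger analysis, no induction on $n$ is needed.

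Your inductive reduction to $n=1$ is correct, but your base case is not a proof. The final paragraph is where the argument breaks down. After restricting to divisors of some $d_0\in D$, you propose to ``partition the primes of $\sigma(d_0)$ into finitely many classes according to local behaviour'' and then ``choose the non-trivial $p$-parts class by class so as to exactly meet each counting condition with minimal contribution, obtaining an element of $D$ whose $p$-parts are simultaneously as small as possible.'' Two problems. First, a divisibility-minimal element of $D$ need \emph{not} have all its $p$-parts simultaneously minimal: take $D=\{12,18\}$, where both elements are minimal but neither has the smallest $p$-part at every prime, and the coordinatewise minimum $6$ is not in $D$. So the target you describe is not what you need to build. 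Second, the ``choice'' procedure is not specified: the defining formula is a Boolean combination of counting conditions, not a conjunction, so there is no single notion of ``exactly meeting each condition''; and in $\mathbb{M}$ the support $\sigma(d_0)$ may be infinite and the $p$-parts non-standard, so it is not clear that your class-by-class selection is even a definable construction yielding an element of $\mathbb{M}$, let alone one lying in $D$. You would essentially have to re-prove well-foundedness of $\mid$ on definable subsets of $\mathbb{M}$ from the QE, which is circuitous when the transfer from $\mathbb{N}$ gives it for free.
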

\begin{proof}
Let $\phi(\overline{v};\overline{w})$ be a formula and $\overline{c}\in\mathbb{M}^{|\overline{w}|}$ a tuple such that $D=\phi(\mathbb{M}^n;\overline{c})$. Since, by well-ordering of $\mathbb{N}$, \textit{every} non-empty subset of $\mathbb{N}^n$ has $\preccurlyeq$-minimal elements, we have the following theorem of $\mathbb{N}$, $$\mathbb{N}\models\forall\overline{w}\bigg[\exists\overline{v}\phi(\overline{v};\overline{w})\to\exists \overline{v}\left[\phi(\overline{v};\overline{w})\wedge\forall \overline{u}\big(\phi(\overline{u};\overline{w})\wedge (\overline{u}\preccurlyeq \overline{v})\to (\overline{u}=\overline{v})\big)\right]\bigg],$$ from which the desired result follows.
\end{proof}

The following two lemmas are very useful, giving a degree of control over the definable closure of a tuple $\overline{c}$; together they imply that any $\preccurlyeq$-minimal element of a $\overline{c}$-definable set $D$ is actually \textit{definable} over $\{\overline{c}\}\cup\sqrt{\mathbb{M}}$. Put differently, the family of $\preccurlyeq$-minimal elements of $D$ can be parametrized over $\overline{c}$ by a set of radical tuples. Refinements of this fact, in the form of Lemma 2.15 and Lemma 4.8, play a key role in the proofs of both of our main results.

\begin{lemma}
Suppose $D\subseteq\mathbb{M}^n$ is a $\overline{c}$-definable set, and that $\overline{d}\in D$ is $\preccurlyeq$-minimal in $D$. Then $\overline{d}_{p^\infty}\in\dcl\{\overline{c}_{p^\infty},p\}$ for each $p\in\mathbb{P}$.
\end{lemma}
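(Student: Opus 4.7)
The plan is to exhibit $\overline{d}_{p^\infty}$ as the unique $\preccurlyeq$-minimal element of a suitable cross-section of $D$ that can be defined using only parameters from $\{\overline{c}_{p^\infty}, p\}$. Fix a prime $p$, write $D = \phi(\mathbb{M}^n; \overline{c})$ for some partitioned formula $\phi(\overline{v}; \overline{w})$, and set
$$D'' := \left\{\overline{y} \in (p^\infty)^n : \overline{y} \cdot \overline{d}_{\neg p^\infty} \in D\right\}.$$
Then $\overline{d}_{p^\infty} \in D''$, and since $(p^\infty)^n$ is totally ordered by $\preccurlyeq$, $D''$ has at most one $\preccurlyeq$-minimal element. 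I claim this element must be $\overline{d}_{p^\infty}$: if $\overline{y} \in D''$ satisfies $\overline{y} \preccurlyeq \overline{d}_{p^\infty}$, then since $\preccurlyeq$ can be verified prime-by-prime (as noted in the excerpt), we have $\overline{y} \cdot \overline{d}_{\neg p^\infty} \preccurlyeq \overline{d}$ in $\mathbb{M}^n$, and the $\preccurlyeq$-minimality of $\overline{d}$ in $D$ then forces $\overline{y} = \overline{d}_{p^\infty}$. Once $D''$ is shown to be $\{\overline{c}_{p^\infty}, p\}$-definable, the preceding lemma gives existence of a $\preccurlyeq$-min of $D''$, total orderedness of $(p^\infty)^n$ gives uniqueness, and definability of the unique $\preccurlyeq$-min yields $\overline{d}_{p^\infty} \in \dcl\{\overline{c}_{p^\infty}, p\}$.

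The main content is thus the $\{\overline{c}_{p^\infty}, p\}$-definability of $D''$, and this is precisely where Theorem A.4 does the work. By QE, assume $\phi$ is a Boolean combination of formulas $\theta_i^{\geqslant m_i}(\overline{v}; \overline{c})$ for $i \in [k]$. For any $\overline{y} \in (p^\infty)^n$, the tuple $\overline{y} \cdot \overline{d}_{\neg p^\infty}$ has $u$-part equal to $\overline{d}_{u^\infty}$ for $u \neq p$ and equal to $\overline{y}$ for $u = p$; Remark 2.1 then says the number of witnessing primes $u \neq p$ in the count $\theta_i^{\geqslant m_i}(\overline{y} \cdot \overline{d}_{\neg p^\infty}; \overline{c})$ is a fixed value $M_i \in \omega \cup \{\infty\}$ independent of $\overline{y}$, while the condition at $u = p$ reads $\theta_i^p(\overline{y}; \overline{c}_{p^\infty})$. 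Therefore $\theta_i^{\geqslant m_i}(\overline{y} \cdot \overline{d}_{\neg p^\infty}; \overline{c})$ is equivalent, as a condition on $\overline{y} \in (p^\infty)^n$, to $\top$, $\bot$, or $\theta_i^p(\overline{y}; \overline{c}_{p^\infty})$, according as $M_i \geq m_i$, $M_i < m_i - 1$, or $M_i = m_i - 1$. Each of these is $\{\overline{c}_{p^\infty}, p\}$-definable, so taking the corresponding Boolean combination and intersecting with the $p$-definable set $(p^\infty)^n$ gives a defining formula for $D''$ over $\{\overline{c}_{p^\infty}, p\}$.

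The only conceptual subtlety---and the one thing to keep straight while writing this up---is that the $M_i$'s do depend on $\overline{d}$ and $\overline{c}$, but they appear not as parameters of the defining formula; rather, they determine \emph{which} Boolean combination of the $\theta_i^p$'s (with truth-value constants $\top$ and $\bot$) one selects. Once this is acknowledged, the argument is essentially just a bookkeeping consequence of Theorem A.4 together with the $\preccurlyeq$-minimality hypothesis.
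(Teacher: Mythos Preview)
Your argument is correct, but it takes a noticeably different route from the paper's. The paper argues by contradiction in three lines: if $\overline{d}_{p^\infty}\notin\dcl\{\overline{c}_{p^\infty},p\}$, then by compactness there is $\overline{q}\in(p^\infty)^n$ with $\overline{q}\equiv_{\overline{c}_{p^\infty}}\overline{d}_{p^\infty}$ and $\overline{q}\prec\overline{d}_{p^\infty}$; Lemma~2.4 then gives $\overline{d}_{\neg\{p\}}\cdot\overline{q}\equiv_{\overline{c}}\overline{d}$, so $\overline{d}_{\neg\{p\}}\cdot\overline{q}\in D$, contradicting $\preccurlyeq$-minimality. In other words, the paper never writes down a defining formula for your cross-section $D''$; it instead observes (via Lemma~2.4, whose proof already absorbed the QE work) that any automorphism of $(p^\infty)^n$ fixing $\overline{c}_{p^\infty}$ extends to one fixing $\overline{c}$, which is exactly the statement that $D''$ is $\{\overline{c}_{p^\infty},p\}$-invariant.

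Your approach unpacks this invariance syntactically: you invoke Theorem~A.4 directly to compute that membership in $D''$ reduces to a Boolean combination of the $\theta_i^p(\overline{y};\overline{c}_{p^\infty})$, with the $M_i$'s selecting which combination. This is longer but more explicit, and has the minor advantage of not appealing to Lemma~2.4 (so it could in principle be read before that lemma). The paper's version has the advantage of isolating the reusable model-theoretic fact---conjugacy over disjoint supports propagates---once and for all in Lemma~2.4, keeping the present proof to a few sentences. Both are fine; your observation that the $M_i$'s are constants determining the \emph{shape} of the formula rather than parameters in it is exactly the right point to flag.
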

\begin{proof}
Fix $p\in\mathbb{P}$ and suppose that $\overline{d}_{p^\infty}\notin\dcl\{\overline{c}_{p^\infty},p\}$. Then, by compactness, we may find some $\overline{q}\in(p^\infty)^n$ such that $\overline{q}\equiv_{\overline{c}_{p^\infty}}\overline{d}_{p^\infty}$ and $\overline{q}\prec\overline{d}_{p^\infty}$; otherwise $\overline{d}_{p^\infty}$ would be $\preccurlyeq$-minimal in $(p^\infty)^n$ satisfying some $\overline{c}_{p^\infty}$-formula, contradicting that $\overline{d}_{p^\infty}\notin\dcl\{\overline{c}_{p^\infty},p\}$. Now by Lemma 2.4 we have $\overline{d}_{\neg\{p\}}\cdot\overline{q}\equiv_{\overline{c}}\overline{d}$, whence $\overline{d}_{\neg\{p\}}\cdot\overline{q}\in D$, contradicting $\preccurlyeq$-minimality of $\overline{d}$.
\end{proof}

\begin{lemma}
Suppose $\overline{a},\overline{c}$ are tuples such that $\overline{a}_{p^\infty}\in\dcl\{\overline{c}_{p^\infty},p\}$ for each $p\in\mathbb{P}$. Then $\overline{a}$ is definable over $\{\overline{c}\}\cup\sqrt{\mathbb{M}}$.
\end{lemma}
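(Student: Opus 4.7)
The plan is to extract, for each prime $p$, a formula witnessing $\overline{a}_{p^\infty}\in\dcl\{\overline{c}_{p^\infty},p\}$; to use compactness to reduce to finitely many such formulas that together cover all primes; to code the resulting pieces by radicals via Lemma 2.1; and finally to assemble these ingredients into one formula defining $\overline{a}$ over $\overline{c}$ together with those radicals.

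In detail: for each $p\in\mathbb{P}$ the hypothesis provides some $\psi(\overline{v};\overline{w},u)$ such that $\overline{a}_{p^\infty}$ is the unique element of $(p^\infty)^n$ satisfying $\psi(\overline{v};\overline{c}_{p^\infty},p)$ — one may always conjunct with $\overline{v}\in(u^\infty)^n$ to restrict attention there. Let $\xi_\psi(u)$ be the $\overline{a}\overline{c}$-formula asserting that $u$ is prime and that $\overline{a}_{u^\infty}$ is the unique element of $(u^\infty)^n$ satisfying $\psi(\overline{v};\overline{c}_{u^\infty},u)$. The partial type $\{\pi(u)\}\cup\{\neg\xi_\psi(u):\psi\}$ is then inconsistent over $\overline{a}\overline{c}$, as any realization would contradict the hypothesis. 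By compactness a finite subfamily $\psi_1,\ldots,\psi_k$ suffices, so the $\overline{a}\overline{c}$-definable sets $Q_i:=\xi_{\psi_i}(\mathbb{M})\cap\mathbb{P}$ cover $\mathbb{P}$.

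Now apply Lemma 2.1 to each $Q_i$ to obtain a radical $r_i\in\sqrt{\mathbb{M}}$ with $\sigma(r_i)\in\{Q_i,\mathbb{P}\setminus Q_i\}$, so that membership in $Q_i$ becomes a $\overline{c}r_i$-definable condition $\delta_i(u)$ on a prime $u$ (namely $u\mid r_i$ or $u\nmid r_i$, depending on the case). Consider the formula
$$\phi(\overline{v})\equiv\forall u\left[\pi(u)\to\bigvee_{i=1}^{k}\left(\delta_i(u)\wedge\bigwedge_{j<i}\neg\delta_j(u)\wedge\psi_i(\overline{v}_{u^\infty};\overline{c}_{u^\infty},u)\right)\right],$$
whose parameters lie in $\overline{c}\cup\{r_1,\ldots,r_k\}\subseteq\{\overline{c}\}\cup\sqrt{\mathbb{M}}$. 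By construction, $\overline{a}$ satisfies $\phi$: for each prime $p$, taking $i$ least with $p\in Q_i$, the conjunct $\psi_i(\overline{a}_{p^\infty};\overline{c}_{p^\infty},p)$ holds. Conversely, any realizer $\overline{a}'$ has $\overline{a}'_{p^\infty}\in(p^\infty)^n$ satisfying the same $\psi_i(\,\cdot\,;\overline{c}_{p^\infty},p)$ as $\overline{a}_{p^\infty}$, hence $\overline{a}'_{p^\infty}=\overline{a}_{p^\infty}$ for every $p\in\mathbb{P}$.

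The main thing to pin down is the final step — concluding $\overline{a}'=\overline{a}$ from agreement on every prime-power part. This follows from the first-order fact (provable in $\mathbb{N}$ and hence inherited by $\mathbb{M}$ by elementarity) that an element is determined by its $p$-parts as $p$ ranges over $\mathbb{P}$. Apart from this and the compactness reduction to finitely many $\psi_i$, the construction is just bookkeeping.
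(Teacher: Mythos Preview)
Your proof is correct and follows essentially the same approach as the paper's: compactness reduces to finitely many witnessing formulas, the corresponding prime sets are coded by radicals, and these pieces are assembled into a single defining formula over $\{\overline{c}\}\cup\sqrt{\mathbb{M}}$. The only minor difference is that the paper invokes Remark~2.3 (restricting to primes in $\sigma(\overline{a})$) to obtain radicals with exactly the desired support and thereby avoid the complement ambiguity, whereas you use Lemma~2.2 (which you mislabel as Lemma~2.1) and handle the two cases explicitly via your~$\delta_i$.
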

\begin{proof}
By hypothesis, the following partial type in $u$ is inconsistent: $$\{\pi(u)\}\cup\left\{\phi(\overline{a}_{u^\infty},\overline{c}_{u^\infty})\to\exists^{>1}\overline{v}\left[\overline{v}\in(u^\infty)^n\wedge\phi(\overline{v},\overline{c}_{u^\infty})\right]\right\}_\phi.$$ Hence, by compactness, there exist finitely many formulas $\phi_1,\dots,\phi_k$ such that, for every prime $p\in\mathbb{P}$, one of $\phi_i((p^\infty)^n,\overline{c}_{p^\infty})=\{\overline{a}_{p^\infty}\}$ holds for some $i\in [k]$. Now, by Remark 2.3, for each $i\in[k]$ define $$r_i=\prod\big\{p\in\sigma(\overline{a}):\phi_i((p^\infty)^n,\overline{c}_{p^\infty})=\{\overline{a}_{p^\infty}\}\big\}\in\sqrt{\mathbb{M}}.$$ Then $\overline{a}$ is definable over $\{\overline{c},\overline{r}\}$.
\end{proof}

The next lemma says that a set definable over a tuple with support $P_1$ and definable over a tuple with support $P_2$ is definable over a tuple with support $P_1\cap P_2$. In Section 4 this allows us to show that every $\mathbb{M}$-definable set is definable over a smallest prime support.

\begin{lemma}
Let $\overline{c}^1,\overline{c}^2$ be any (finite) tuples, and let $P=\sigma(\overline{c}^1)\cap\sigma(\overline{c}^2)$. If $D\subseteq\mathbb{M}^n$ is $\overline{c}^1$-definable and $\overline{c}^2$-definable, then $D$ is $\overline{c}^1_P$-definable.
\end{lemma}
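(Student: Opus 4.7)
The plan is to show that every automorphism $f$ of $\mathbb{M}$ fixing $\overline{c}^1_P$ pointwise satisfies $f(D)=D$, from which $\overline{c}^1_P$-definability of $D$ follows by homogeneity of $\mathbb{M}$. Set $\overline{a}:=\overline{c}^1_{\neg P}$ and $\overline{b}:=\overline{c}^2_{\neg P}$, so that $\overline{c}^1=\overline{c}^1_P\cdot\overline{a}$ and $\overline{c}^2=\overline{c}^2_P\cdot\overline{b}$ componentwise; from the definition $P=\sigma(\overline{c}^1)\cap\sigma(\overline{c}^2)$ one immediately has $\overline{a}\perp\overline{c}^1_P\overline{c}^2$ and $\overline{b}\perp\overline{c}^1\overline{c}^2_P$.

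The main technical obstacle, I expect, is a parametric extension of Lemma 2.4: if $\overline{a}_1\equiv_{\overline{e}}\overline{a}_2$, $\overline{c}_1\equiv_{\overline{e}}\overline{c}_2$, and $\overline{a}_i\perp\overline{c}_i\overline{e}$ for $i\in\{1,2\}$, then $\overline{a}_1\overline{c}_1\equiv_{\overline{e}}\overline{a}_2\overline{c}_2$. This should admit essentially the same proof as Lemma 2.4 via Theorem A.4: for a formula $\theta^{\geqslant n}(\overline{v},\overline{w},\overline{e})$, the witnessing primes partition into those in $\sigma(\overline{e})$, in $\sigma(\overline{a}_i)\setminus\sigma(\overline{e})$, and in $\sigma(\overline{c}_i)\setminus\sigma(\overline{e})$; the count in each class is first-order expressible over $\overline{e}$ in whichever of $\overline{a}_i$ or $\overline{c}_i$ is relevant, and so is preserved by the hypothesized type equivalences. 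A corresponding parametric extension of Lemma 2.6 — that for $\overline{a}\perp\overline{e}$ and any parameter set $B$, there exists $\overline{a}'\equiv_{\overline{e}}\overline{a}$ with $\overline{a}'\perp B$ — then follows routinely by combining Lemma 2.6 with Lemma 2.4.

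Granted these tools, fix an automorphism $f$ fixing $\overline{c}^1_P$. Since $\overline{b}\perp\overline{c}^1_P$, I would apply the parametric Lemma 2.6 to produce $\overline{b}_0\equiv_{\overline{c}^1_P}\overline{b}$ with $\overline{b}_0\perp\overline{a}\cdot f^{-1}(\overline{a})$, and set $\overline{c}^{2\prime}:=\overline{c}^2_P\cdot\overline{b}_0$. Applying the parametric Lemma 2.4 to the pairs $(\overline{b},\overline{c}^2_P)$ and $(\overline{b}_0,\overline{c}^2_P)$ over $\overline{c}^1_P$ then gives $\overline{c}^{2\prime}\equiv_{\overline{c}^1_P}\overline{c}^2$. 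Moreover, since $f$ fixes $\overline{c}^1_P$ pointwise it must preserve the prime set $P=\sigma(\overline{c}^1_P)$ setwise, so $f(\sigma(\overline{c}^2_P))\subseteq P$; combined with the choices $\overline{b}_0\perp\overline{a}$ and $\overline{b}_0\perp f^{-1}(\overline{a})$, a direct inspection of supports confirms both $\overline{c}^{2\prime}\perp\overline{a}$ and $f(\overline{c}^{2\prime})\perp\overline{a}$.

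Two further applications of the parametric Lemma 2.4 — one to $(\overline{a},\overline{c}^{2\prime})$ and $(\overline{a},\overline{c}^2)$ over $\overline{c}^1_P$, and one to $(\overline{a},f(\overline{c}^{2\prime}))$ and $(\overline{a},\overline{c}^2)$ over $\overline{c}^1_P$ (using $f(\overline{c}^{2\prime})\equiv_{\overline{c}^1_P}\overline{c}^2$, which is immediate from $f$ fixing $\overline{c}^1_P$) — then yield $\overline{c}^{2\prime}\equiv_{\overline{c}^1}\overline{c}^2$ and $f(\overline{c}^{2\prime})\equiv_{\overline{c}^1}\overline{c}^2$. Choosing automorphisms $g_1,g_2$ fixing $\overline{c}^1$ with $g_1(\overline{c}^2)=\overline{c}^{2\prime}$ and $g_2(\overline{c}^2)=f(\overline{c}^{2\prime})$, and writing $D=\phi_2(\mathbb{M}^n,\overline{c}^2)$ by the hypothesized $\overline{c}^2$-definability of $D$, each $g_i$ fixes $D$, so $D=g_i(D)=\phi_2(\mathbb{M}^n,g_i(\overline{c}^2))$; hence $D=\phi_2(\mathbb{M}^n,\overline{c}^{2\prime})=\phi_2(\mathbb{M}^n,f(\overline{c}^{2\prime}))$, and therefore $f(D)=f(\phi_2(\mathbb{M}^n,\overline{c}^{2\prime}))=\phi_2(\mathbb{M}^n,f(\overline{c}^{2\prime}))=D$, as desired.
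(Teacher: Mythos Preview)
Your argument is correct. One remark: the parametric extensions you state are immediate corollaries of Lemmas 2.4 and 2.6 themselves, so there is no need to revisit Theorem A.4. For the first, apply Lemma 2.4 to the pairs $(\overline{a}_i,\overline{c}_i\overline{e})$; for the second, use Lemma 2.6 to obtain $\overline{a}'\equiv\overline{a}$ with $\overline{a}'\perp B\overline{e}$, and then Lemma 2.4 gives $\overline{a}'\overline{e}\equiv\overline{a}\overline{e}$. With those in hand, your chain of equivalences goes through as written; the step $f(\overline{c}^{2\prime})\equiv_{\overline{c}^1_P}\overline{c}^2$ is justified because $f$ fixing $\overline{c}^1_P$ gives $\overline{c}^{2\prime}\equiv_{\overline{c}^1_P}f(\overline{c}^{2\prime})$, which combines with $\overline{c}^{2\prime}\equiv_{\overline{c}^1_P}\overline{c}^2$ by transitivity.

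The paper's proof follows the same overall strategy (show every automorphism $f$ fixing $\overline{c}^1_P$ fixes $D$ setwise, via Lemmas 2.4 and 2.6) but executes it dually: rather than moving $\overline{c}^2_{\neg P}$ to a position $\overline{b}_0$ disjoint from both $\overline{a}$ and $f^{-1}(\overline{a})$, the paper moves $f(\overline{c}^1_{\neg P})$ back to $\overline{c}^1_{\neg P}$ via a composite $h\circ g$ with $g$ fixing $\overline{c}^1$ and $h$ fixing $\overline{c}^2$. Concretely, it sets $Q=\sigma(f(\overline{c}^1_{\neg P}))\cap\sigma(\overline{c}^2)$, finds $g$ fixing $\overline{c}^1$ that pushes $f(\overline{c}^1_{\neg P})_Q$ off of $\overline{c}^2$, and then finds $h$ fixing $\overline{c}^2$ and $\overline{c}^1_P$ taking $g(f(\overline{c}^1_{\neg P}))$ to $\overline{c}^1_{\neg P}$; since $h\circ g$ and $h\circ g\circ f$ both fix $D$, so does $f$. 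Your route and the paper's are essentially mirror images of one another; yours has the mild advantage of isolating the parametric lemmas as reusable statements.
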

\begin{proof}
Let $f$ be any automorphism fixing $\overline{c}^1_P$; we will show that $f$ fixes $D$ setwise. First note that, since $f(\overline{c}^1_P)=\overline{c}^1_P$ and $P\subseteq\sigma(\overline{c}^1)$, we have $f(P)=P$ and thus $f(\overline{c}^1_{\neg P})\perp P$. In particular, defining $Q=\sigma(f(\overline{c}_{\neg P}^1))\cap\sigma(\overline{c}^2)$, we have $Q\perp P$ and $Q\subseteq\sigma(\overline{c}^2)$, and hence $Q\perp\overline{c}^1$. Thus $f(\overline{c}^1_{\neg P})_{Q}\perp f(\overline{c}_{\neg P}^1)_{\neg Q}\overline{c}^1_{\neg P}\overline{c}^1_P$, and so by Lemma 2.4 and Lemma 2.6 we may find an automorphism $g$ fixing each of the latter three tuples and such that $g(f(\overline{c}^1_{\neg P})_{Q})\perp\overline{c}^2$. Since $g$ fixes $\overline{c}^1$, it fixes $D$ setwise, and furthermore we have $$g(f(\overline{c}_{\neg P}^1))=g(f(\overline{c}_{\neg P}^1)_{Q})\cdot f(\overline{c}_{\neg P}^1)_{\neg Q},$$ whence $g(f(\overline{c}^1_{\neg P}))\perp\overline{c}^2$; in particular $g(f(\overline{c}^1_{\neg P}))\perp\overline{c}^1_P$ as well. Thus, since $g(f(\overline{c}^1_{\neg P}))\equiv\overline{c}^1_{\neg P}$ and $\overline{c}^1_{\neg P}\perp \overline{c}^1_{P}\overline{c}^2$, by Lemma 2.4 we may find an automorphism $h$ fixing these latter two tuples and taking $g(f(\overline{c}^1_{\neg P}))$ to $\overline{c}^1_{\neg P}$. Since $h$ fixes $\overline{c}^2$, it fixes $D$ setwise, so $h\circ g$ fixes $D$ setwise. On the other hand, note that each of $f,g,h$ fix $\overline{c}^1_P$, and that $h(g(f(\overline{c}^1_{\neg P})))=\overline{c}^1_{\neg P}$ by construction of $h$. In particular, $h\circ g\circ f$ fixes $\overline{c}^1$ and thus fixes $D$ setwise. Combining these two facts tells us that $f$ fixes $D$ setwise, as desired.
\end{proof}

Finally, we record a construction that generalizes the notion of a greatest common divisor. Since the restriction of $\preccurlyeq$ to $(p^\infty)^n\subset\mathbb{M}^n$ is a total ordering for every $p\in\mathbb{P}$, if $D\subseteq (p^\infty)^n$ in Lemma 2.8, then $D$ has a \textit{unique} $\preccurlyeq$-minimal element, which we denote by $\min_{\preccurlyeq}(D)$.

\begin{lemma}
Let $D\subseteq\mathbb{M}^n$ be a non-empty $\mathbb{M}$-definable set of $n$-tuples. Then there exists a unique $n$-tuple $\gamma(D)\in\mathbb{M}^n$ such that $\gamma(D)_{p^\infty}=\min_{\preccurlyeq}\left\{\overline{d}_{p^\infty}:\overline{d}\in D\right\}$ for each $p\in\mathbb{P}$. In particular, we have $\sigma(\gamma(D))=\bigcap_{\overline{d}\in D}\sigma(\overline{d})$.
\end{lemma}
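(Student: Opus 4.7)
My plan is to separate the argument into uniqueness and existence of $\gamma(D)$, followed by the support identification. Uniqueness is immediate from unique factorization: the sentence $\forall a\forall b\,[a=b\leftrightarrow \forall u(\pi(u)\to a_{u^\infty}=b_{u^\infty})]$ holds in $\mathbb{N}$ and hence in $\mathbb{M}$, and applied componentwise this shows that any two tuples satisfying the displayed property must coincide.

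For existence, fix a formula $\phi(\overline{v};\overline{w})$ and a parameter $\overline{c}$ with $D=\phi(\mathbb{M}^n;\overline{c})$. For each $p\in\mathbb{P}$ the set $\{\overline{d}_{p^\infty}:\overline{d}\in D\}$ is a non-empty $\{\overline{c},p\}$-definable subset of $(p^\infty)^n$; by Lemma 2.8 and totality of $\preccurlyeq$ on $(p^\infty)^n$ it has a unique $\preccurlyeq$-minimum $m(p)$. The task therefore reduces to producing a single $\overline{\gamma}\in\mathbb{M}^n$ with $\overline{\gamma}_{p^\infty}=m(p)$ for every $p$, which I would accomplish by transferring the first-order sentence
\[ \forall\overline{w}\,\Big[\exists\overline{v}\,\phi(\overline{v};\overline{w})\to \exists\overline{\gamma}\,\forall u\,\big(\pi(u)\to \exists\overline{d}[\phi(\overline{d};\overline{w})\wedge\overline{\gamma}_{u^\infty}=\overline{d}_{u^\infty}]\wedge \forall\overline{d}[\phi(\overline{d};\overline{w})\to \overline{\gamma}_{u^\infty}\preccurlyeq \overline{d}_{u^\infty}]\big)\Big] \]
from $\mathbb{N}$. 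It holds in $\mathbb{N}$ because elements of $\mathbb{N}^n$ have finite support: for any fixed $\overline{d}_0\in\phi(\mathbb{N}^n;\overline{c})$ we have $\{p:m(p)\neq\overline{1}\}\subseteq\sigma(\overline{d}_0)$, which is finite, so the componentwise product $\overline{\gamma}:=\prod_p m(p)$ is a well-defined element of $\mathbb{N}^n$ witnessing the existential.

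The support description is then a direct corollary: $\gamma(D)_{p^\infty}=m(p)=\overline{1}$ if and only if $\overline{1}\in\{\overline{d}_{p^\infty}:\overline{d}\in D\}$, i.e.\ if and only if some $\overline{d}\in D$ has $p\notin\sigma(\overline{d})$, so $\sigma(\gamma(D))=\bigcap_{\overline{d}\in D}\sigma(\overline{d})$. The main obstacle is existence: a direct saturation/compactness argument inside $\mathbb{M}$ is unavailable because the partial type asserting ``$\overline{\gamma}_{p^\infty}=m(p)$ for every $p\in\mathbb{P}$'' has a parameter set as large as $|\mathbb{M}|$, which may exceed the saturation bound of the monster. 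Rewriting the requirement as a single sentence parametrized only by $\phi$ and referring to $\preccurlyeq$-minima rather than fixed values $m(p)$ is what lets us circumvent this by transfer from $\mathbb{N}$.
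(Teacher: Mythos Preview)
Your proof is correct and follows essentially the same approach as the paper: you express the defining property of $\gamma(D)$ as a first-order sentence (parametrized by the formula defining $D$), verify it in $\mathbb{N}$ using finite support, and transfer to $\mathbb{M}$, then derive the support identity from $\overline{1}\preccurlyeq\overline{c}$ for all $\overline{c}$. The paper's proof is terser---it simply refers back to the template of Lemma 2.8---but your explicit sentence and the separate treatment of uniqueness are exactly the details that template unpacks to.
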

\begin{proof}
This can be proved exactly as Lemma 2.8, since the desired condition is first-order expressible and possible for every subset of $\mathbb{N}^n$. For the second part, note that $\overline{1}\preccurlyeq\overline{c}$ for every $n$-tuple $\overline{c}$. In particular, given $p\in\mathbb{P}$, we have $\gamma(D)_{p^\infty}=\overline{1}$ if and only if there exists some $\overline{d}\in D$ with $\overline{d}_{p^\infty}=\overline{1}$.
\end{proof}

Note that $\gamma(D)\preccurlyeq\overline{d}$ for every $\overline{d}\in D$. If $n=1$, ie if $D\subseteq\mathbb{M}$, then the element $\gamma(D)\in\mathbb{M}$ is furthermore universal with that property: if $e\in\mathbb{M}$ is such that $e\preccurlyeq d$, ie $e\mid d$, for every $d\in D$, then also $e\preccurlyeq \gamma(D)$, ie $e\mid \gamma(D)$. In other words, in that case, $\gamma(D)$ is precisely the greatest common divisor of $D$.

\subsection{Cofinal sets}
In this section we will consider only $\varnothing$-definable sets of $1$-tuples. If $D\subseteq\mathbb{M}$ is such, then say that $D$ is \textit{cofinal} if it is cofinal with respect to divisibility in $\mathbb{M}$: in other words, if, for every $c\in\mathbb{M}$, there exists some $d\in D$ such that $c\mid d$. Fix a cofinal $\varnothing$-definable set $D\subseteq\mathbb{M}$.

\begin{lemma}
We have $\dcl D\supseteq\sqrt{\mathbb{M}}$.
\end{lemma}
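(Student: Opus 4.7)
The plan is to show that every radical $r\in\sqrt{\mathbb{M}}$ can be expressed as $\sqrt{\gcd(d_1,d_2)}$ for suitable $d_1,d_2\in D$; since both $\sqrt{\cdot}$ and $\gcd$ are $\varnothing$-definable operations on $\mathbb{M}$, this gives $r\in\dcl\{d_1,d_2\}\subseteq\dcl D$. Because $\sigma(\sqrt{\gcd(d_1,d_2)})=\sigma(d_1)\cap\sigma(d_2)$ and radicals are determined by their supports, the task reduces to producing $d_1,d_2\in D$ with $\sigma(d_1)\cap\sigma(d_2)=\sigma(r)$.

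Write $P:=\sigma(r)$ and assume $P\neq\varnothing$, since otherwise $r=1\in\dcl\varnothing\subseteq\dcl D$. By cofinality of $D$, I pick $d_1\in D$ with $r\mid d_1$, so that $P\subseteq\sigma(d_1)$, and decompose $d_1=d_{1,P}\cdot d_{1,\neg P}$. To produce a second parameter, I replace the off-$P$ piece of $d_1$ by a conjugate disjoint from $\sigma(d_1)$: applying Lemma 2.6 to the tuple $d_{1,\neg P}$ with parameter set $\{d_1\}$ yields some $e\equiv d_{1,\neg P}$ with $e\perp d_1$, and I set $d_2:=d_{1,P}\cdot e$.

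The two verifications needed are (i) $d_2\in D$, and (ii) $\sigma(d_1)\cap\sigma(d_2)=P$. For (i), note that $d_{1,P}\perp d_{1,\neg P}$ by construction, and $d_{1,P}\perp e$ because $\sigma(d_{1,P})\subseteq\sigma(d_1)$ while $e\perp d_1$; Lemma 2.4 then yields $d_1=d_{1,P}\cdot d_{1,\neg P}\equiv d_{1,P}\cdot e=d_2$, and the $\varnothing$-definability of $D$ forces $d_2\in D$. For (ii), using $\sigma(e)\cap\sigma(d_1)=\varnothing$ and $\sigma(d_{1,P})=P\subseteq\sigma(d_1)$, I compute $\sigma(d_1)\cap\sigma(d_2)=\sigma(d_1)\cap(\sigma(d_{1,P})\cup\sigma(e))=\sigma(d_{1,P})=P$, as required. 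Thus $\sqrt{\gcd(d_1,d_2)}=r\in\dcl\{d_1,d_2\}\subseteq\dcl D$. The one step that takes any care is the verification that $d_2$ still lies in $D$, which is precisely the back-and-forth between Lemmas 2.4 and 2.6.
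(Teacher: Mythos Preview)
Your proof is correct and follows essentially the same approach as the paper's: both pick $d_1\in D$ with $r\mid d_1$, use Lemma~2.6 to find $e\equiv d_{1,\neg\sigma(r)}$ with $e\perp d_1$, apply Lemma~2.4 to conclude $d_2:=d_{1,\sigma(r)}\cdot e\in D$, and then recover $r$ from $\sigma(d_1)\cap\sigma(d_2)=\sigma(r)$. You are slightly more explicit in naming the recovering operation as $\sqrt{\gcd(\cdot,\cdot)}$, but the argument is the same.
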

\begin{proof}
Fix a radical element $r\in\sqrt{\mathbb{M}}$. By cofinality of $D$, there exists $d\in D$ such that $r\mid d$. By Lemma 2.6, we may find $e$ a conjugate of $d_{\neg\sigma(r)}$ such that $e\perp d$. Now by Lemma 2.4 we have $d=d_{\neg\sigma(r)}\cdot d_{\sigma(r)}\equiv e\cdot d_{\sigma(r)}$, so by $\varnothing$-definability $e\cdot d_{\sigma(r)}\in D$. But $r$ is now definable over $\{d,e\cdot d_{\sigma(r)}\}$, since its prime factors are precisely $\sigma(d)\cap\sigma(e\cdot d_{\sigma(r)})$.
\end{proof}

We require one fact that we will not prove here. First note that, for any $p\in\mathbb{P}$, the substructure $p^\infty$ of $\mathbb{M}$ is a model of Presburger arithmetic, since the $p$-adic valuation map induces a monoid isomorphism between $p^\infty\cap\mathbb{N}$ and $(\omega,+)$ whenever $p\in\mathbb{P}\cap\mathbb{N}$.

\begin{fact}
Let $p\in\mathbb{P}$ and suppose that $q_1,q_2\in p^\infty$ are powers of $p$. If $q_1\mid q_2$ and $q_2$ lies in $\dcl\{q_1,p\}$, then also $q_1\in\dcl\{q_2\}$.
\end{fact}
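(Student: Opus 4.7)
The plan is to translate the claim into Presburger arithmetic using the isomorphism between $p^\infty$ and a Presburger model noted just before the fact. Write $q_1=p^a$ and $q_2=p^b$, where $a,b$ live in a Presburger model $P$ and $p$ itself plays the role of the Presburger constant $1$. The hypotheses translate to $a\leqslant b$ and $b\in\dcl_P\{a,1\}$: the latter because any $\mathbb{M}$-formula defining $q_2$ uniquely from $\{q_1,p\}$ restricts, via the $p$-formula $\pow_p$ cutting out $p^\infty$, to a formula of $P$ defining $b$ uniquely from $\{a,1\}$. The target $q_1\in\dcl\{q_2\}$ will follow from $a\in\dcl_P\{b,1\}$, because for $q_2\neq 1$ we have $p=\sqrt{q_2}\in\dcl\{q_2\}$, while the case $q_2=1$ forces $q_1=1$ and is vacuous.

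Next I would invoke the standard Presburger fact that $b\in\dcl_P\{a,1\}$ forces the existence of integers $n>0$ and $m,k\in\mathbb{Z}$ such that $nb=ma+k$; this follows from Presburger quantifier elimination, as isolating $b$ in a single-variable formula over $\{a,1\}$ ultimately requires an equational constraint of this shape. I then split on whether $m=0$. If $m\neq 0$, the Presburger formula $mx=nb-k$ has $a$ as its unique solution in $P$ (a torsion-free totally ordered abelian group), so $a\in\dcl_P\{b,1\}$ as desired. If $m=0$, then $b=k/n$ is a fixed standard integer; the hypothesis $a\leqslant b$ then forces $a$ to be standard as well, so $a\in\dcl_P(\varnothing)\subseteq\dcl_P\{b,1\}$.

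The main obstacle is invoking the Presburger $\dcl$ characterization cleanly and verifying the translation between $\mathbb{M}$-definable closure and Presburger-definable closure on $p^\infty$; both are routine but warrant care, particularly in confirming that the binding constraint isolating a singleton must reduce to an equation $nb=ma+k$ rather than a combination of inequalities and congruences. Once this is in hand, the only remaining ingredient is the degeneracy case analysis (where $m=0$ forces $b$, and hence $a$, to be standard), and then absorbing the parameter $p$ via $p=\sqrt{q_2}$ completes the argument.
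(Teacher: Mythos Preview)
Your argument is correct. The paper, however, does not actually prove this fact: it records it with a one-line citation to the exchange property for Presburger arithmetic (Example~10 of Belegradek--Peterzil--Wagner). Your route is therefore genuinely more self-contained: you characterize $\dcl_P\{a\}$ explicitly via Presburger quantifier elimination as those $b$ with $nb=ma+k$ for some integers $n>0$ and $m,k$, and then invoke the order hypothesis $a\leqslant b$ to dispatch the degenerate case $m=0$. This has the virtue of making transparent exactly where the divisibility hypothesis $q_1\mid q_2$ enters---without it the $m=0$ branch fails, since $b$ standard places no constraint on $a$---whereas the bare citation to exchange leaves this opaque. Regarding the translation step you flag as the main obstacle: what you need is that every monoid automorphism $\alpha$ of $p^\infty$ extends to an automorphism of $\mathbb{M}$ fixing $p$, so that $\dcl_{\mathbb{M}}\{q_1,p\}\cap p^\infty$ matches $\dcl_P\{a\}$. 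This holds by taking $\hat\alpha(c)=c_{\neg\{p\}}\cdot\alpha(c_{p^\infty})$, which is a monoid automorphism of $\mathbb{M}$ and fixes $p$ because the Presburger generator $1$ is $\varnothing$-definable; so your reduction is sound.
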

\begin{proof}
This is a case of `exchange' for Presburger arithmetic; see Example 10 in \citep{belegradek_peterzil_wagner_2000}.
\end{proof}

\begin{lemma}
We have $\dcl D=\mathbb{M}$.
\end{lemma}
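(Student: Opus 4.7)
The plan is to show that every $a\in\mathbb{M}$ lies in $\dcl D$. The strategy combines three of the preceding results: cofinality of $D$ (to exhibit elements of $D$ dominating $a$), Lemma 2.10 applied to a carefully chosen $a$-definable set (to bound $d_{p^\infty}$ over $a_{p^\infty}$ at each prime), and Fact 2.14 (to reverse this containment and recover $a$ locally from $d$). The final assembly will then invoke Lemma 2.11 together with the already-established Lemma 2.13.

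Given $a\in\mathbb{M}$, I would first use cofinality of $D$ to observe that the $\{a\}$-definable set $E:=\{v\in D:a\mid v\}$ is non-empty, and then apply Lemma 2.8 to choose some $d\in E$ which is $\preccurlyeq$-minimal. Since $E$ is $\{a\}$-definable, Lemma 2.10 gives $d_{p^\infty}\in\dcl\{a_{p^\infty},p\}$ for every prime $p$. Now for each $p\in\sigma(a)$, the elements $a_{p^\infty}$ and $d_{p^\infty}$ both lie in $p^\infty$ with $a_{p^\infty}\mid d_{p^\infty}$, so Fact 2.14 applies and yields $a_{p^\infty}\in\dcl\{d_{p^\infty}\}$; for $p\notin\sigma(a)$, $a_{p^\infty}=1$ lies trivially in $\dcl\varnothing$. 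In either case, $a_{p^\infty}\in\dcl\{d_{p^\infty},p\}$ for every $p\in\mathbb{P}$.

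With the hypothesis of Lemma 2.11 thus verified (for $\overline{c}=d$ and $\overline{a}=a$), it follows that $a$ is definable over $\{d\}\cup\sqrt{\mathbb{M}}$. Since $d\in D$ and Lemma 2.13 gives $\sqrt{\mathbb{M}}\subseteq\dcl D$, we conclude $a\in\dcl D$, which together with the trivial inclusion $\dcl D\subseteq\mathbb{M}$ completes the proof. The main obstacle is choosing the correct $d$: the key insight is that taking $d$ to be $\preccurlyeq$-minimal within the $\{a\}$-definable set $\{v\in D:a\mid v\}$ forces $d_{p^\infty}$ into the definable closure of $a_{p^\infty}$ over the Presburger substructure $p^\infty$ at every prime, which is precisely the hypothesis Fact 2.14 needs in order to reverse the direction of definability and express $a_{p^\infty}$ over $d_{p^\infty}$.
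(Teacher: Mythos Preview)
Your proof is correct and follows essentially the same argument as the paper's own proof: choose a $\preccurlyeq$-minimal $d$ in $\{v\in D:a\mid v\}$, use the minimality lemma to get $d_{p^\infty}\in\dcl\{a_{p^\infty},p\}$, reverse this via the Presburger exchange fact, and then assemble $a$ from $d$ together with $\sqrt{\mathbb{M}}$. The only slip is that your citations are off by one---what you call Lemma~2.10 and Lemma~2.11 are the paper's Lemma~2.9 and Lemma~2.10, respectively.
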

\begin{proof}
Let $c\in\mathbb{M}$. By cofinality of $D$, the $c$-definable set $\{d\in D:c\mid d\}$ is non-empty, and thus by Lemma 2.8 it has a minimal element $d$. By Lemma 2.9, $d_{p^\infty}\in\dcl\{c_{p^\infty},p\}$ for each $p\in\mathbb{P}$. Since $c\mid d$, also $c_{p^\infty}\mid d_{p^\infty}$ for each $p\in\mathbb{P}$, and thus by Fact 2.14 $c_{p^\infty}\in\dcl\{d_{p^\infty}\}$ for each $p\in\mathbb{P}$. Now by Lemma 2.10, $c$ is definable over $\{d\}\cup\sqrt{\mathbb{M}}$, and since $\sqrt{\mathbb{M}}\subseteq\dcl D$ by Lemma 2.13, this means that $c$ is $D$-definable, as desired.
\end{proof}

\section{Stably embedded sets}

Recall that a $\varnothing$-definable set $D\subseteq\mathbb{M}^n$ is said to be `stably embedded' if every $\mathbb{M}$-definable subset of $D^k$ is definable with parameters from $D$. \citep[p.~136]{tent_ziegler_2012} In this section, we give a full characterization of the $\varnothing$-definable stably embedded sets of $T$. Note that, by Lemma 2.15, every cofinal $\varnothing$-definable subset of $\mathbb{M}$ is stably embedded.

\begin{lemma}
 The subset $\sqrt{\mathbb{M}}\subset\mathbb{M}$ is stably embedded.
\end{lemma}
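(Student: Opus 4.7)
The plan is to use the quantifier-elimination result of Theorem A.4 to reduce the claim to the following: for every formula $\theta(\overline{v},\overline{w})$, every $n\in\omega$, and every $\overline{c}\in\mathbb{M}^{|\overline{w}|}$, the set
$$D:=\theta^{\geqslant n}(\mathbb{M}^k;\overline{c})\cap\sqrt{\mathbb{M}}^k$$
is definable over parameters in $\sqrt{\mathbb{M}}$. Since any $\mathbb{M}$-definable subset of $\sqrt{\mathbb{M}}^k$ is, by Theorem A.4 (after intersecting with $\sqrt{\mathbb{M}}^k$), a Boolean combination of such sets, this reduction suffices.

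The key observation, via Remark 2.1, is that for $\overline{v}\in\sqrt{\mathbb{M}}^k$ and $p\in\mathbb{P}$, the tuple $\overline{v}_{p^\infty}$ is one of the $2^k$ `patterns' $p^\epsilon:=(p^{\epsilon_1},\ldots,p^{\epsilon_k})$ for $\epsilon\in\{0,1\}^k$, and $\theta^p(\overline{v},\overline{c})$ depends only on this pattern and on $\overline{c}_{p^\infty}$. Accordingly, for each $\epsilon$ the set
$$Q_\epsilon:=\{p\in\mathbb{P}:\mathbb{M}\models\theta^p(p^\epsilon,\overline{c})\}$$
is a $\overline{c}$-definable set of primes, so by Lemma 2.2 there exists a radical $r_\epsilon\in\sqrt{\mathbb{M}}$ with $\sigma(r_\epsilon)$ equal to one of $Q_\epsilon$ and $\mathbb{P}\setminus Q_\epsilon$.

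For $\overline{v}\in\sqrt{\mathbb{M}}^k$, define the $\overline{v}$-definable set of primes $P_\epsilon(\overline{v}):=\{p\in\mathbb{P}:\overline{v}_{p^\infty}=p^\epsilon\}$; concretely, $P_\epsilon(\overline{v})=\bigcap_{\epsilon_i=1}\sigma(v_i)\cap\bigcap_{\epsilon_i=0}(\mathbb{P}\setminus\sigma(v_i))$. The number of primes witnessing $\theta^{\geqslant n}(\overline{v},\overline{c})$ is then $\sum_{\epsilon\in\{0,1\}^k}|P_\epsilon(\overline{v})\cap Q_\epsilon|$, so $\overline{v}\in D$ iff there exist $n_\epsilon\in\{0,\ldots,n\}$ with $\sum_\epsilon n_\epsilon\geqslant n$ and $|P_\epsilon(\overline{v})\cap Q_\epsilon|\geqslant n_\epsilon$ for each $\epsilon$. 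Each such inequality is first-order expressible in $\overline{v}$ with parameter $r_\epsilon$, using counting quantifiers over primes and substituting $\sigma(r_\epsilon)$ or its complement for $Q_\epsilon$ as given by Lemma 2.2; taking the finite disjunction over all admissible tuples $(n_\epsilon)$ produces a formula defining $D$ over $\{r_\epsilon:\epsilon\in\{0,1\}^k\}\subseteq\sqrt{\mathbb{M}}$, as desired.

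The main subtlety is the pattern $\epsilon=\overline{0}$: here $P_0(\overline{v})=\mathbb{P}\setminus\sigma(\overline{v})$, and $Q_0$ may well be cofinite (for instance whenever the sentence $\theta(\overline{1},\overline{1})$ holds), in which case Lemma 2.2 only furnishes $r_0$ with support $\mathbb{P}\setminus Q_0$ rather than $Q_0$ itself. The defining formula accommodates both alternatives uniformly; aside from this bookkeeping, the argument is essentially a direct translation of the counting quantifiers from Theorem A.4 into conditions on radical supports.
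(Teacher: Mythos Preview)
Your proof is correct and takes essentially the same approach as the paper's: reduce via Theorem A.4 to formulas $\theta^{\geqslant n}$, observe that for radical $\overline{v}$ each $\overline{v}_{p^\infty}$ is one of the $2^k$ patterns $p^\epsilon$, use Lemma 2.2 to code the sets $Q_\epsilon=\{p:\theta^p(p^\epsilon,\overline{c})\}$ by radical elements, and rewrite the counting condition over these parameters. The only cosmetic difference is that the paper packages the result as a single counting formula $\exists^{\geqslant k}u\,[\pi(u)\wedge\bigwedge_\epsilon((\overline{v}_{u^\infty}=u^\epsilon)\to\neg_\epsilon(u\mid s_\epsilon))]$, whereas you spell out the equivalent finite disjunction over distributions $(n_\epsilon)$; your handling of the $\epsilon=\overline{0}$ case is likewise exactly the issue the paper's $\neg_\epsilon$ notation is absorbing.
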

\begin{proof}
Fix a formula $\phi(\overline{v};\overline{w})$ and a tuple $\overline{c}\in\mathbb{M}^{|\overline{w}|}$, and denote $n=|\overline{v}|$; we want to show that $\phi(\sqrt{\mathbb{M}}{}^n;\overline{c})$ is $\sqrt{\mathbb{M}}$-definable. Thus we will construct a formula $\psi(\overline{v};\overline{s})$, with parameters $\overline{s}$ from $\sqrt{\mathbb{M}}$, such that $\mathbb{M}\models\phi(\overline{r};\overline{c})$ if and only if $\mathbb{M}\models\psi(\overline{r};\overline{s})$ for any radical $n$-tuple $\overline{r}$.
 
 By Theorem A.4, we may assume that $\phi(\overline{v};\overline{w})$ is of the form $\theta^{\geqslant k}(\overline{v};\overline{w})$ for some formula $\theta(\overline{v};\overline{w})$ and some natural number $k\in\omega$. By Remark 2.1, $\theta^{\geqslant k}(\overline{r};\overline{c})$ holds if and only if there are at least $k$ elements $p\in\mathbb{P}$ for which $\theta^{p}(\overline{r}_{p^\infty};\overline{c})$ holds. On the other hand, if $\overline{r}\in\sqrt{\mathbb{M}}{}^n$ is radical, then for any prime $p$ the tuple $\overline{r}_{p^\infty}$ is of the form $p^{{\varepsilon}}:=p^{\varepsilon_1}\dots p^{\varepsilon_n}$ for some index tuple $\overline{\varepsilon}\in 2^n$. In light of this, for each index tuple $\overline{\varepsilon}\in 2^n$, we define a corresponding new formula $\theta_{{\varepsilon}}(u;\overline{w})\equiv\pi(u)\wedge\theta^u(u^{\varepsilon_1},\dots,u^{\varepsilon_n};\overline{w})$. Then, for any radical tuple $\overline{r}$, we have: $$\mathbb{M}\models\phi(\overline{r};\overline{c})\iff \mathbb{M}\models\theta_{{\varepsilon}}(p;\overline{c})\text{, where }\overline{r}_{p^\infty}=p^{{\varepsilon}},\text{ for at least }k\text{ elements }p\in\mathbb{P};$$ note that the formula $\overline{v}_{u^\infty}=u^{{\varepsilon}}$ is first-order expressible for any $\overline{\varepsilon}\in 2^n$.
 
 Now we are in a position to apply Lemma 2.2. For each $\overline{\varepsilon}\in 2^n$, there exists $\neg_{\varepsilon}$ either the empty string or the string `$\neg$' and a radical element $s_{\varepsilon}\in\sqrt{\mathbb{M}}$ whose prime support is precisely the set $\{p\in\mathbb{P}:\mathbb{M}\models\neg_{{\varepsilon}}\theta_{{\varepsilon}}(p;\overline{c})\}$. Now define
 $$\psi(\overline{v};\overline{s})\equiv\exists^{\geqslant k}u\left[\pi(u)\wedge\bigwedge_{{\varepsilon}\in 2^n}\left[\big(\overline{v}_{u^\infty}=u^{\varepsilon}\big)\to\neg_{\varepsilon}\big(u\mid s_{\varepsilon}\big)\right]\right].$$ By definition, the condition $u\mid s_\varepsilon$ is precisely the condition that $\mathbb{M}\models\neg_{{\varepsilon}}\theta_{{\varepsilon}}(u;\overline{c})$. Thus, for a radical tuple $\overline{r}\in\sqrt{\mathbb{M}}{}^n$, the sentence $\psi(\overline{r};\overline{s})$ says `there exist at least $k$ elements $p\in\mathbb{P}$ such that, if $\overline\varepsilon\in 2^n$ is such that $\overline{r}_{p^\infty}=p^{{\varepsilon}}$, then $\mathbb{M}\models\theta_{{\varepsilon}}(p;\overline{c})$'. Since every $\overline{r}_{p^\infty}$ will be of form $p^{{\varepsilon}}$ for some $\overline{\varepsilon}$, and the parameters $s_\varepsilon$ all lie in $\sqrt{\mathbb{M}}$, we are done.
\end{proof}

\begin{lemma}
If $D\subseteq\mathbb{M}$ is a $\varnothing$-definable and stably embedded set of $1$-tuples, and not a subset of $\{1\}$, then $\dcl D\supseteq\sqrt{\mathbb{M}}$.
\end{lemma}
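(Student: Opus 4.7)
The plan is to mimic the proof of Lemma 2.13 by first establishing the following \emph{radical cofinality} property: for every radical $r\in\sqrt{\mathbb{M}}$, some element of $D$ is divisible by $r$. Granted this, the construction of Lemma 2.13 produces a second element $e\cdot d_{\sigma(r)}\in D$ with $\sigma(d)\cap\sigma(e\cdot d_{\sigma(r)})=\sigma(r)$, and so $r\in\dcl\{d,e\cdot d_{\sigma(r)}\}\subseteq\dcl D$ exactly as in that proof.

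The key step is to show that $D$ contains some $d_0$ with infinite support; this is the one place stable embeddedness is used. Suppose for contradiction that every $d\in D$ has finite support. By compactness applied to the formulas $\exists^{\geqslant n}u[\pi(u)\wedge u\mid v]$, these supports are uniformly bounded by some $N\in\omega$. Fix a radical $c\in\sqrt{\mathbb{M}}$ with $\sigma(c)$ both infinite and co-infinite in $\mathbb{P}$, and consider the $c$-definable set $X=\{v\in D:\sigma(c)\cap\sigma(v)\neq\varnothing\}$. By stable embeddedness, $X=\phi(\mathbb{M},\overline{e})\cap D$ for some formula $\phi$ and tuple $\overline{e}\in D^{<\omega}$, and the set $\sigma(\overline{e})$ is finite. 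Pick primes $p\in\sigma(c)\setminus\sigma(\overline{e})$ and $p'\in\mathbb{P}\setminus(\sigma(c)\cup\sigma(\overline{e}))$, and let $\tau$ be an automorphism of $\mathbb{M}$ swapping $p$ with $p'$ and acting trivially elsewhere; then $\tau$ fixes $\overline{e}$, so $\tau(X)=X$. Now, fixing any $d_0\in D\setminus\{1\}$, homogeneity produces a conjugate $d\in D$ of $d_0$ with support $\{p\}\cup S$ for some $S\subseteq\mathbb{P}\setminus(\sigma(c)\cup\{p'\})$ of size $|\sigma(d_0)|-1$. Then $d\in X$ (since $p\in\sigma(c)$), but $\sigma(\tau(c))=(\sigma(c)\setminus\{p\})\cup\{p'\}$ is disjoint from $\sigma(d)$, so $d\notin\tau(X)$, contradicting $\tau(X)=X$.

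Having secured $d_0\in D$ with $\sigma(d_0)$ infinite, fix a radical $r\in\sqrt{\mathbb{M}}$. If $\sigma(r)$ is infinite, Lemma 2.5 gives $r\equiv\sqrt{d_0}$, so transporting by an automorphism sending $\sqrt{d_0}$ to $r$ produces $d\in D$ with $\sqrt{d}=r$, and in particular $r\mid d$. If $\sigma(r)=\{p_1,\dots,p_k\}$ is finite, we pick any distinct $q_1,\dots,q_k\in\sigma(d_0)$ and apply an automorphism sending $q_i\mapsto p_i$ to obtain $d\in D$ with $\{p_1,\dots,p_k\}\subseteq\sigma(d)$, again giving $r\mid d$. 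Either way, the argument of Lemma 2.13 now applies verbatim to this $d$, yielding $r\in\dcl D$. The main obstacle is the support-boundedness contradiction above, which requires simultaneously coordinating the swap $\tau$ (which must fix $\overline{e}$) with the choice of conjugate $d$ (whose support must meet $\sigma(c)$ only at the prescribed prime $p$ and avoid $\{p'\}$).
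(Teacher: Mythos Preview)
Your proof is correct and takes a genuinely different route from the paper's. The paper argues directly by contradiction: assuming some $r\in\sqrt{\mathbb{M}}$ lies outside $\dcl D$, it first observes that $\sigma(r)$ must be infinite, then exhibits the $r$-definable set $E=\{v\in D:\sqrt{v}\mid r\}$ and shows it is not $D$-definable --- for any finite tuple $\overline{d}$ from $D$ there is an automorphism fixing $\overline{d}$ but moving $r$, and such an automorphism is shown not to fix $E$ setwise. You instead first establish that $D$ must contain an element of infinite support (via the test set $X$ and the bounded-support hypothesis), then deduce radical cofinality and invoke the mechanism of Lemma~2.13. The paper's argument is shorter and exploits the assumption $r\notin\dcl D$ more directly; yours has the merit of isolating a clean structural intermediate fact (any non-trivial stably embedded $\varnothing$-definable $D\subseteq\mathbb{M}$ contains an element of infinite support) that could be reused elsewhere.

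One small point deserves care: the automorphism $\tau$ ``swapping $p$ with $p'$ and acting trivially elsewhere'' fixes the large set $\mathbb{P}\setminus\{p,p'\}$ pointwise, so its existence does not follow from monster-model homogeneity alone. It does exist --- one can build it explicitly from any monoid isomorphism $p^\infty\to p'^\infty$, which exists since $p\equiv p'$ --- but you can also sidestep the issue entirely. It suffices to take $\tau$ swapping $p,p'$ and fixing only $\overline{e}$ together with the finitely many primes of $S$; this follows from Lemma~2.4, and since then $\tau^{-1}(\sigma(d))=\{p'\}\cup S$ is disjoint from $\sigma(c)$, you still get $d\notin\tau(X)$ and hence the contradiction, without ever needing to compute $\sigma(\tau(c))$ exactly.
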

\begin{proof}
 Suppose that some $r\in\mathbb{\sqrt{M}}$ is not $D$-definable. First note that $\sigma(r)$ is infinite, as we have the following: if $F\subseteq\mathbb{M}$ is \textit{any} $\varnothing$-definable set that is not a subset of $\{1\}$, then \textit{every} element decomposable as a product of finitely many primes is $F$-definable; to see this, it suffices to show that every $p\in\mathbb{P}$ is $F$-definable. Since $F$ is $\varnothing$-definable, we have $F\cap\N\nsubseteq\{1\}$, so choose any $a=q_1^{k_1}\cdot\ldots\cdot q_n^{k_n}\in F\cap\mathbb{N}\setminus\{1\}$ with the $q_i$ distinct primes and the $k_i>0$. We may find automorphisms $g,h$ of $\mathbb{M}$ such that $g(q_1)=h(q_1)=p$, and $g(q_i)\neq h(q_j)$ for any $i,j>1$. Then $p$ is $\{g(a),h(a)\}$-definable, since it is the only prime dividing both elements. On the other hand, $F$ is $\varnothing$-definable, so $g(a),h(a)\in F$, and thus we are done.
 
 So, $r$ has infinitely many prime factors. Now consider the subset $E\subseteq D$ defined by the formula $\sqrt{v}\mid{r}$. As above, choose any $a=q_1^{k_1}\cdot\ldots \cdot q_n^{k_n}\in D\cap\mathbb{N}\setminus\{1\}$, with the $q_i$ distinct primes and the $k_i>0$. Choose any distinct $p_1,\dots,p_n\in\sigma(r)$; then $a$ and $e:=p_1^{k_1}\cdot\ldots\cdot p_n^{k_n}$ are conjugate, and $\sqrt{e}\mid r$, so $e\in E$. Now we will show that $E$ is not $D$-definable; indeed, let $\overline{d}\in D^{|\overline{d}|}$ be a finite tuple. Since $r$ is not $D$-definable, there is an automorphism $f$ fixing $\overline{d}$ and not fixing $r$. Since a radical element is uniquely determined by its prime factors, we have two cases: either there is a prime $p\mid r$ such that $f(p)\nmid r$, or there is a prime $p\nmid r$ such that $f(p)\mid r$. By replacing $f$ with $f^{-1}$ if necessary, we may assume we are in the first case. Now we may find an automorphism $g$ that swaps $p_1$ and $p$ and fixes $\{p_2,\dots,p_n\}\setminus\{p\}$; in particular, since $p\mid r$, we still have $g(e)\in E$. On the other hand, since $p_1\mid e$, we have $p\mid g(e)$ and hence $f(p)\mid f(g(e))$. Since $f(p)\nmid r$, this means $f(g(e))\notin E$, and thus, since $g(e)\in E$, $f$ does not fix $E$ setwise. Since $\overline{d}$ was arbitrary, this show that $E$ is not $D$-definable, whence $D$ is not stably embedded. 
 \end{proof}

For terminological convenience, given a prime $p\in\mathbb{P}$ and an element $a\in\mathbb{M}$, say that $p$ is `non-standard in $a$' if $a_{p^\infty}\neq p^n$ for any $n\in\omega$. Say that $p$ is `standard in $a$' otherwise.

\begin{lemma}
If $D\subseteq\mathbb{M}$ is a $\varnothing$-definable and stably embedded set of $1$-tuples, and not a subset of $\dcl\sqrt{\mathbb{M}}$, then $\dcl D=\mathbb{M}$.
\end{lemma}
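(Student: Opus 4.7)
The plan is to reduce $\dcl D = \mathbb{M}$ to cofinality of $D$, which will yield the result by Lemma 2.15. Since $D \not\subseteq \dcl\sqrt{\mathbb{M}}$ in particular implies $D \not\subseteq \{1\}$, Lemma 3.2 gives $\sqrt{\mathbb{M}} \subseteq \dcl D$. Combining the $\varnothing$-definability of $D$ (which keeps conjugates of any element inside $D$) with Lemma 2.12, cofinality of $D$ then reduces to exhibiting some $a \in D$ with infinitely many primes non-standard in $a$.

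For $d \in \mathbb{M}$ and $K \in \omega$, write $P_K(d) = \{p \in \mathbb{P} : p^K \mid d\}$. If some $a \in D$ has $P_K(a)$ infinite for every $K$, then by saturation of $\mathbb{M}$ the partial type in $p$ asserting `$\pi(p) \wedge p^K \mid a$ for every $K \in \omega$' is realized, and realized outside any given small set of primes, yielding infinitely many primes non-standard in $a$ as required. Otherwise every $d \in D$ has $P_K(d)$ finite for some $K$; by a standard saturation argument this refines to uniform bounds $K_0, M_0 \in \omega$ with $|P_{K_0}(d)| \leqslant M_0$ for every $d \in D$. Fix $a \in D \setminus \dcl\sqrt{\mathbb{M}}$; since every prime outside the finite set $P_{K_0}(a)$ has $a_{p^\infty} = p^k$ with $k < K_0$ in $\omega$, the hypothesis $a \notin \dcl\sqrt{\mathbb{M}}$ forces at least one $p_0 \in P_{K_0}(a)$ to be non-standard in $a$.

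I plan to rule out this uniform-bound case via stable embeddedness. In this case, a Presburger-arithmetic analysis of $p_0^\infty$ shows that $\dcl D \cap p_0^\infty$ is a proper subset of $p_0^\infty$, so some $p_0^\mu \in p_0^\infty \setminus \dcl D$ exists. For such $\mu$, consider the $\{p_0^\mu\}$-definable subset $E_\mu = \{d \in D : d_{p_0^\infty} \mid p_0^\mu\}$ of $D$. Given any $\bar{d} \in D^k$, since $p_0 \in \sqrt{\mathbb{M}} \subseteq \dcl D$ but $p_0^\mu \notin \dcl\bar{d}$, an automorphism of $\mathbb{M}$ fixing $\bar{d}$ and $p_0$ but moving $p_0^\mu$ exists; such an automorphism moves $E_\mu$ setwise, contradicting $D$-definability of $E_\mu$ and hence the stable embeddedness of $D$.

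The main obstacle is this genericity step --- rigorously showing $\dcl D \cap p_0^\infty \neq p_0^\infty$ from the uniform bound on $|P_{K_0}(d)|$, by controlling which Presburger-types of valuations at $p_0$ can be realized across all elements of $D$. Once this is in place, Lemma 2.12 delivers the cofinality of $D$ and Lemma 2.15 concludes $\dcl D = \mathbb{M}$.
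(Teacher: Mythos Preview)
Your opening is sound: the reduction to cofinality via Lemma 2.15 matches the paper, and your first case (some $a\in D$ with $P_K(a)$ infinite for all $K$, hence infinitely many non-standard primes in $a$, hence $D$ cofinal by closure under conjugates) is correct --- though the relevant lemma is 2.7, not 2.12. The trouble is entirely in the uniform-bound case, and there the plan does not go through.

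The crucial claim --- that the uniform bound $|P_{K_0}(d)|\leqslant M_0$ forces $\dcl D\cap p_0^\infty\subsetneq p_0^\infty$ --- is simply false as a consequence of the bound. Take $D=\bigcup_{p\in\mathbb{P}}p^\infty$, the $\varnothing$-definable set of prime powers. Every $d\in D$ has $|P_K(d)|\leqslant 1$ for all $K$, so the uniform bound holds with $M_0=1$; any non-standard prime power lies in $D\setminus\dcl\sqrt{\mathbb{M}}$; and yet $p_0^\infty\subseteq D$, so $\dcl D\cap p_0^\infty=p_0^\infty$. Thus no ``Presburger analysis controlling which valuation-types at $p_0$ are realized across $D$'' can extract your claim from the bound alone. (This $D$ is not stably embedded, so it does not contradict the lemma --- but it shows your mechanism cannot produce the contradiction without already invoking stable embeddedness in some other way.)

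There is a secondary gap as well: even granting some $p_0^\mu\notin\dcl D$, an automorphism fixing $\bar d$ and $p_0$ and sending $p_0^\mu$ to $p_0^{\mu'}$ moves $E_\mu$ only if some $d\in D$ has $d_{p_0^\infty}$ dividing exactly one of $p_0^\mu,p_0^{\mu'}$. This is not automatic and would need a separate density argument.

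The paper avoids both problems by not fixing a prime in advance. It chooses an auxiliary $c\in\mathbb{M}$ with infinitely many non-standard primes and sets $E=\{d\in D:d\mid c\}$. Given any finite $\bar d$ from $D$, one picks a prime $p$ non-standard in $c$ but standard in every $d_i$ (possible since each $d_i$ has only finitely many non-standard primes), places a conjugate $e\in D$ of the element $a$ so that $e\mid c$ with $p$ non-standard in $e$, and then slides $e_{p^\infty}$ past $c_{p^\infty}$ while fixing $\bar d$. The key is that the prime along which one moves is chosen \emph{after} $\bar d$, which a fixed-$p_0$ strategy cannot accommodate.
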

\begin{proof}
By Lemma 2.15, it suffices to show that $D$ is cofinal, so assume otherwise for contradiction. By Lemma 2.7, and since $D$ is definable without parameters, this means that there are only finitely many primes non-standard in $d$ for any $d\in D$. On the other hand, $D\nsubseteq\dcl\sqrt{\mathbb{M}}$, so let $d$ be an element of $D$ outside of $\dcl\sqrt{\mathbb{M}}$; we claim there is at least one prime non-standard in $d$. Otherwise, by compactness, there exists a finite bound $n_0\in\omega$ such that $d_{p^\infty}\mid p^{n_0}$ for every $p\in\mathbb{P}$. By Remark 2.3, there exists a radical element $s_i$ with support $\{p\in\sigma(d):d_{p^\infty}=p^i\}$ for each $i\in[n_0]$, and then $d$ is definable over $\overline{s}$, contradicting that $d\notin\dcl\sqrt{\mathbb{M}}$.

Now choose any element $c\in\mathbb{M}$ such that there are infinitely many primes non-standard in $c$. We claim that the subset $E\subseteq D$ defined by the formula $v\mid c$ is not $D$-definable, which will show the desired result. To see this, let $\overline{d}$ be any finite tuple from $D$. By construction of $c$, and since there are only finitely many primes non-standard in each $d_i$, there exists a prime $p\in\mathbb{P}$ non-standard in $c$ but standard in every $d_i$.

Now, by the first paragraph, let $d$ be any element of $D$ with at least one non-standard prime. By the proof of Lemma 2.7 and construction of $c$, there exists a conjugate $e\equiv d$ such that $e\mid c$ and such that $p$ is non-standard in $e$. Since $D$ is $\varnothing$-definable, $e\in D$, and hence $e\in E$. On the other hand, since $p$ is non-standard in $e$, we may find an element $q\in p^\infty$ such that $q\equiv e_{p^\infty}$ and such that $q\nmid c_{p^\infty}$. Since $p$ is standard in every $d_i$, we then have $q\equiv_{\overline{d}} e_{p^\infty}$, and so by Lemma 2.4 we also have $e_{\neg\{p\}}\cdot q\equiv_{\overline{d}} e$. But $q\nmid c_{p^\infty}$, so $e_{\neg\{p\}}\cdot q\notin E$, and hence $E$ is not $\overline{d}$-definable. Since $\overline{d}$ was arbitrary, $E$ is thus not $D$-definable, as desired.
\end{proof}

\begin{theorem}
A $\varnothing$-definable set $D\subseteq\mathbb{M}^n$ is stably embedded if and only if $\dcl D$ is one of the sets $\{1\}$, $\dcl\sqrt{\mathbb{M}}$, and $\mathbb{M}$.
\end{theorem}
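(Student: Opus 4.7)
My plan is to reduce the theorem to the case $n=1$ by passing to the set of projections $D^* := \bigcup_{i \in [n]} \pi_i(D) \subseteq \mathbb{M}$. This set is $\varnothing$-definable, and $\dcl D = \dcl D^*$ because each tuple in $D$ is interdefinable with the set of its components. The first step is to show that $D$ is stably embedded if and only if $D^*$ is. For the forward direction, given any $\mathbb{M}$-definable $X \subseteq (D^*)^k$, for each $\overline{i} \in [n]^k$ the pullback
$$X_{\overline{i}} := \{(\overline{d}^1,\dots,\overline{d}^k) \in D^k : (\pi_{i_1}(\overline{d}^1),\dots,\pi_{i_k}(\overline{d}^k)) \in X\}$$
is an $\mathbb{M}$-definable subset of $D^k$, hence $D$-definable by stable embeddedness, hence $D^*$-definable upon expanding each $D$-parameter into its components; then $X$ is a finite union of images of the $X_{\overline{i}}$ under coordinate projections, so is $D^*$-definable. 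The reverse direction is immediate, since any $\mathbb{M}$-definable $Y \subseteq D^k$ automatically sits inside $(D^*)^{nk}$ and so is $D^*$-definable by stable embeddedness of $D^*$.

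With this equivalence in hand, the ``only if'' direction follows directly: Lemmas 3.2 and 3.3, applied to $D^*$, force $\dcl D = \dcl D^*$ to be $\{1\}$ (if $D^* \subseteq \{1\}$), $\dcl\sqrt{\mathbb{M}}$ (if $D^* \subseteq \dcl\sqrt{\mathbb{M}}$ but $D^* \not\subseteq \{1\}$), or $\mathbb{M}$ (if $D^* \not\subseteq \dcl\sqrt{\mathbb{M}}$). For the ``if'' direction, the cases $\dcl D = \{1\}$ and $\dcl D = \mathbb{M}$ are essentially trivial: in the first $D$ is contained in $\{(1,\dots,1)\}$ and is therefore finite, and in the second every parameter set is already $D$-definable.

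The remaining case $\dcl D = \dcl\sqrt{\mathbb{M}}$ is the main obstacle. I would handle it by again passing to $D^*$ and transferring stable embeddedness from $\sqrt{\mathbb{M}}$, which is stably embedded by Lemma 3.1. The transfer uses a compactness argument: since $D^* \subseteq \dcl\sqrt{\mathbb{M}}$, there are finitely many $\varnothing$-definable partial functions $f_j : \sqrt{\mathbb{M}}^{\ell_j} \to D^*$ that are jointly surjective onto $D^*$, and symmetrically $\sqrt{\mathbb{M}} \subseteq \dcl D^*$ yields partial functions $g_i : (D^*)^{m_i} \to \sqrt{\mathbb{M}}$ going the other way. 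Given an $\mathbb{M}$-definable $X \subseteq (D^*)^k$, pulling back along the $f_j$'s produces, for each $\overline{j} \in [m]^k$, an $\mathbb{M}$-definable subset of a product of powers of $\sqrt{\mathbb{M}}$, which Lemma 3.1 shows to be $\sqrt{\mathbb{M}}$-definable; pushing forward along the $f_j$'s and rewriting each $\sqrt{\mathbb{M}}$-parameter using the $g_i$'s then presents $X$ as $D^*$-definable. The main subtlety is upgrading the pointwise $\dcl$-equivalence of $D^*$ and $\sqrt{\mathbb{M}}$ into this uniform two-way translation, which is precisely what compactness delivers.
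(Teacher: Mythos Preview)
Your proposal is correct and takes essentially the same approach as the paper: the paper isolates the transfer principle (if $\dcl D=\dcl E$ then $D$ is stably embedded iff $E$ is) as a standalone Lemma~C.2, proved via a compactness argument (Lemma~C.1) identical to the one you sketch, and then invokes it both to reduce to $n=1$ and to handle the $\dcl\sqrt{\mathbb{M}}$ case. Your version differs only in packaging---you do the $D\leftrightarrow D^*$ reduction by hand with explicit projections rather than citing the general lemma, and then essentially reprove Lemma~C.2 for the $\sqrt{\mathbb{M}}$ transfer.
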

\begin{proof}
The sets $\{1\}$ and $\mathbb{M}$ are trivially stably embedded, and $\sqrt{\mathbb{M}}$ is stably embedded by Lemma 3.1. Hence the backwards direction follows from Lemma C.2. For the forwards direction, again by Lemma C.2, we may assume that $n=1$, and then the desired implications are given in Lemma 3.2 and Lemma 3.3.
\end{proof}



\section{Imaginaries}
Let $D\subseteq\mathbb{M}^n$ be an $X$-definable set for some finite set $X\subset\mathbb{M}$. Recall that $X$ is a `canonical parameter' for $D$ if it is fixed pointwise by every automorphism of $\mathbb{M}$ fixing $D$ setwise, and that $X$ is an `almost canonical parameter' for $D$ if it is fixed setwise by every automorphism of $\mathbb{M}$ fixing $D$ setwise.

Further recall that a theory $T$ has `elimination of imaginaries' (EI) if every $\mathbb{M}$-definable set $D\subseteq\mathbb{M}^n$ has a canonical parameter, `elimination of finite imaginaries' if every finite subset of $\mathbb{M}^n$ has a canonical parameter, and `weak elimination of imaginaries' (WEI) if every $\mathbb{M}$-definable set $D\subseteq\mathbb{M}^n$ has an almost canonical parameter. \citep[p.~139]{tent_ziegler_2012} It is well-known that, to check EI and WEI, it suffices to assume that $D$ is an equivalence class of a $\varnothing$-definable equivalence relation $E\subseteq\mathbb{M}^m\times\mathbb{M}^m$; indeed, if $D=\phi(\mathbb{M}^n;\overline{c})$ for some formula $\phi(\overline{v};\overline{w})$ and some tuple $\overline{c}\in\mathbb{M}^{m}$, then we may define the equivalence relation $$E(\overline{w}_1,\overline{w}_2)\equiv\forall\overline{v}\left[\phi(\overline{v};\overline{w}_1)\leftrightarrow\phi(\overline{v};\overline{w}_2)\right],$$ and the equivalence class $\overline{c}^E$ will be fixed setwise by precisely the automorphisms of $\mathbb{M}$ fixing $D$ setwise.

\begin{remark} By Lemma 2.2, any $\mathbb{M}$-definable subset of $\mathbb{P}$ has a canonical parameter.
\end{remark}

We will show that Skolem arithmetic does not have elimination of finite imaginaries, but that it does have WEI; we prove the latter result by reducing the question from $\mathbb{M}$-definable sets to $\sqrt{\mathbb{M}}$-definable sets, the latter being more tractable.

\begin{theorem}
$T$ does not eliminate finite imaginaries.
\end{theorem}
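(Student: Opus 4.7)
Fix four distinct primes $p, q, r, s\in\mathbb{P}$ and consider the two-element set $D = \{pq, rs\}$. The plan is to show $D$ has no canonical parameter. Suppose for contradiction that $\overline{e}\in\mathbb{M}^k$ is one; I will produce an automorphism $\mu$ of $\mathbb{M}$ that fixes $\overline{e}$ pointwise but acts on $\{p,q,r,s\}$ as the transposition $(pr)$, so that $\mu(pq) = qr\notin D$, the required contradiction.

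\textbf{Restricting the support of $\overline{e}$.} First, I would show $\sigma(\overline{e})\subseteq\{p,q,r,s\}$. Decompose $\overline{e} = \overline{e}_{\{p,q,r,s\}}\cdot \overline{e}_{\neg\{p,q,r,s\}}$, and by Lemma 2.6 choose a conjugate $\overline{e}''$ of $\overline{e}_{\neg\{p,q,r,s\}}$ whose support is disjoint from $\sigma(\overline{e})\cup\{p,q,r,s\}$. Applying Lemma 2.4 with $\overline{a}_1=\overline{a}_2 = (\overline{e}_{\{p,q,r,s\}}, pq, rs)$, $\overline{c}_1 = \overline{e}_{\neg\{p,q,r,s\}}$, and $\overline{c}_2 = \overline{e}''$, then invoking homogeneity of $\mathbb{M}$, I obtain an automorphism $\gamma$ fixing $\overline{e}_{\{p,q,r,s\}}, pq, rs$ and sending $\overline{e}_{\neg\{p,q,r,s\}}$ to $\overline{e}''$. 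Since $\gamma$ fixes $D$ pointwise we must have $\gamma(\overline{e}) = \overline{e}$; cancellation then forces $\overline{e}'' = \overline{e}_{\neg\{p,q,r,s\}}$, and their orthogonality forces both to equal $\overline{1}$. Now $D$ is not $\emptyset$-definable (any automorphism sending $p$ outside of $\{p,q,r,s\}$ moves $pq$ out of $D$), so $\overline{e}\neq\overline{1}$, and since the setwise stabilizer of $D$ acts on $\{p,q,r,s\}$ as the transitive dihedral group $D_4$ and preserves $\sigma(\overline{e})$, in fact $\sigma(\overline{e}) = \{p,q,r,s\}$, with each $\overline{e}_{x^\infty}$ ($x\in\{p,q,r,s\}$) nontrivial.

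\textbf{The bad automorphism.} The automorphism realizing $(pr)(qs)$ lies in the setwise stabilizer of $D$ and so fixes $\overline{e}$; in particular it carries $(\overline{e}_{p^\infty},\overline{e}_{r^\infty})$ to $(\overline{e}_{r^\infty},\overline{e}_{p^\infty})$, so these tuples have the same type over $\emptyset$. Now apply Lemma 2.4 with $\overline{a}_1 = (\overline{e}_{p^\infty},\overline{e}_{r^\infty})$, $\overline{a}_2 = (\overline{e}_{r^\infty},\overline{e}_{p^\infty})$, and $\overline{c}_1 = \overline{c}_2 = (\overline{e}_{q^\infty},\overline{e}_{s^\infty})$ (the orthogonality hypotheses hold since $\{p,r\}\cap\{q,s\} = \emptyset$); homogeneity then yields an automorphism $\mu$ with $\mu(\overline{e}_{p^\infty}) = \overline{e}_{r^\infty}$, $\mu(\overline{e}_{r^\infty}) = \overline{e}_{p^\infty}$, and $\mu$ fixing $\overline{e}_{q^\infty}, \overline{e}_{s^\infty}$. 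By commutativity of multiplication, $\mu(\overline{e}) = \overline{e}$. On the other hand, since $\overline{e}_{p^\infty}\neq\overline{1}$, we have $\mu(p)\in\sigma(\overline{e}_{r^\infty}) = \{r\}$, and symmetrically $\mu(r) = p$, $\mu(q) = q$, $\mu(s) = s$, so $\mu$ acts as $(pr)$ on $\{p,q,r,s\}$ and $\mu(pq) = qr\notin D$, the required contradiction.

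\textbf{Main obstacle.} The crux is the construction of $\mu$: we need an automorphism acting on $\{p,q,r,s\}$ as a transposition outside the dihedral $D_4$ (hence not preserving the pairing in $D$), while still fixing $\overline{e}$ componentwise. This is possible precisely because the monoid operation is commutative, so simultaneously exchanging the $p$- and $r$-parts of each component of $\overline{e}$ leaves the tuple unchanged; Lemma 2.4 is what converts this commutativity into the existence of the required automorphism.
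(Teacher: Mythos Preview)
Your proof is correct and follows essentially the same approach as the paper: both use the two-element set $\{pq,rs\}$ as the witness, exploit the $D_4$-action of the setwise stabilizer on $\{p,q,r,s\}$ to establish conjugacy of the relevant prime parts of the would-be canonical parameter, and then invoke Lemma~2.4 and commutativity to produce an automorphism fixing $\overline{e}$ but acting as a transposition outside $D_4$. The only real difference is that you add an explicit preliminary step restricting $\sigma(\overline{e})$ to (and then showing it equals) $\{p,q,r,s\}$, whereas the paper leaves the support unrestricted and argues more tersely; your version is slightly longer but makes the application of Lemma~2.4 cleaner.
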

\begin{proof}
Fix any distinct primes $p_1,p_2,q_1,q_2\in\mathbb{P}$, and define $a=p_1\cdot p_2$ and $b=q_1\cdot q_2$. We will show that $\{a,b\}$ has no canonical parameter. To see this, suppose that $e\in\mathbb{M}$ is any element fixed by all automorphisms fixing $\{a,b\}$ setwise. Then we claim that $$e_{p_1^\infty}\equiv e_{p_2^\infty}\equiv e_{q_1^\infty}\equiv e_{q_2^\infty}.$$ Indeed, the automorphisms of $\mathbb{M}$ fixing $\{a,b\}$ setwise are the disjoint union of those that fix both of $\{p_1,p_2\}$ and $\{q_1,q_2\}$ setwise and those that swap the sets $\{p_1,p_2\}$ and $\{q_1,q_2\}$. In particular, for each $r,s\in\{p_1,p_2,q_1,q_2\}$, there is an automorphism fixing $\{a,b\}$ setwise – hence fixing $e$ – and swapping $r$ and $s$, whence the claim follows. Thus if $X\subset\mathbb{M}$ is any finite subset fixed pointwise by every automorphism of $\mathbb{M}$ fixing $\{a,b\}$ setwise, then we may find an automorphism fixing $X$ pointwise, swapping $p_1$ and $q_1$, and fixing $p_2$ and $q_2$. This automorphism takes $a$ to $p_1\cdot q_1$ and $b$ to $p_2\cdot q_2$, hence does not fix $\{a,b\}$ setwise, and the result follows.
\end{proof}

Now we begin showing that $T$ has WEI; throughout the rest of this section, we will fix a $\varnothing$-definable equivalence relation $E\subseteq\mathbb{M}^n\times\mathbb{M}^n$ and a tuple $\overline{a}\in\mathbb{M}^n$; we will show that $\overline{a}^E$ has an almost canonical parameter. We begin by showing that $\overline{a}^E$ is definable over a smallest prime support. Recall the $n$-tuple $\gamma(\overline{a}^E)$ defined in Lemma 2.12, each $p$-part of which is $\min_{\preccurlyeq}\left\{\overline{a}'_{p^\infty}:\overline{a}'\in \overline{a}^E\right\}$, and recall in particular that $\sigma(\gamma(\overline{a}^E))=\bigcap_{\overline{a}'\in\overline{a}^E}\sigma(\overline{a}')$.

\begin{lemma}
There exists $\overline{a}'\in \overline{a}^E$ such that $\sigma(\overline{a})\cap\sigma(\overline{a}')=\sigma(\gamma(\overline{a}^E))$.
\end{lemma}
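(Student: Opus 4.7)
Set $P := \sigma(\gamma(\overline{a}^E))$. By Lemma 2.12, $P = \bigcap_{\overline{b} \in \overline{a}^E} \sigma(\overline{b})$, so $P \subseteq \sigma(\overline{b})$ for every $\overline{b} \in \overline{a}^E$; together with $P \subseteq \sigma(\overline{a})$, this means $P \subseteq \sigma(\overline{a}) \cap \sigma(\overline{a}')$ automatically for each $\overline{a}' \in \overline{a}^E$. Hence it suffices to find some $\overline{a}' \in \overline{a}^E$ with $\overline{a}'_{p^\infty} = \overline{1}$ for every $p \in \sigma(\overline{a}) \setminus P$. By the saturation of $\mathbb{M}$ applied to the partial type
$$\{E(\overline{a}, \overline{v})\} \cup \{\overline{v}_{p^\infty} = \overline{1} : p \in \sigma(\overline{a}) \setminus P\},$$
this reduces via compactness to the following finite statement: for every finite $Q_0 \subseteq \sigma(\overline{a}) \setminus P$, some $\overline{b} \in \overline{a}^E$ has $\overline{b}_{p^\infty} = \overline{1}$ for all $p \in Q_0$.

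I would prove the finite statement by induction on $|Q_0|$, the base case being trivial ($\overline{a}$ itself works). For the inductive step, write $Q_0 = Q_0' \cup \{p\}$ with $|Q_0'| = k$, take $\overline{b}' \in \overline{a}^E$ handling $Q_0'$ from the inductive hypothesis, and use the description $P = \bigcap_{\overline{b} \in \overline{a}^E} \sigma(\overline{b})$ together with the assumption $p \notin P$ to furnish $\overline{b} \in \overline{a}^E$ with $\overline{b}_{p^\infty} = \overline{1}$. The task is to combine the "good behavior" of $\overline{b}'$ at $Q_0'$ and of $\overline{b}$ at $p$ into a single tuple $\overline{b}'' \in \overline{a}^E$ that handles all of $Q_0$. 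My approach would be to apply Lemma 2.6 in order to push the non-$\sigma(\overline{a})$ component of $\overline{b}$ to primes fresh with respect to $\overline{a}\overline{b}'$, and then invoke Lemma 2.4 in a parametric form over $\overline{a}$ to ensure that the resulting tuple still lies in $\overline{a}^E$. A careful gluing of the $P$-part of $\overline{b}'$, the $p$-excised $(\sigma(\overline{a}) \setminus P)$-part of $\overline{b}'$, and the fresh exterior inherited from $\overline{b}$ would produce the desired $\overline{b}''$.

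\textbf{Main obstacle.} The chief difficulty is that $\overline{a}^E$ is $\overline{a}$-definable rather than $\varnothing$-definable, so Lemma 2.4 (which provides type equality only over $\varnothing$) and Lemma 2.6 do not immediately preserve $E$-membership. One must develop parametric analogs of these lemmas over $\overline{a}$, verifying via the QE result (Theorem A.4) that the Boolean combinations of counting formulas $\theta^{\geqslant n}(\overline{a}, \overline{v})$ defining $E(\overline{a}, \overline{v})$ are preserved when non-$\sigma(\overline{a})$ parts of tuples are displaced to fresh primes. The gluing step is especially delicate because the contributions from primes in $P$, in $\sigma(\overline{a}) \setminus P$, and outside $\sigma(\overline{a})$ must combine consistently across $\overline{b}'$, $\overline{b}$, and the fresh conjugate in order to preserve each $\theta^{\geqslant n}$-threshold appearing in the Boolean decomposition of $E$.
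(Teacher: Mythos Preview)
Your plan has a genuine gap, and it is exactly the one you flag as the ``main obstacle'': the gluing you describe in the inductive step comes with no argument that the resulting tuple $\overline{b}''$ lies in $\overline{a}^E$. Assembling $\overline{b}''$ from pieces of $\overline{b}'$ and $\overline{b}$ via Lemmas~2.4 and~2.6 gives at best $\overline{b}'' \equiv_\varnothing (\text{something})$, not $\overline{b}'' \equiv_{\overline{a}} (\text{something in }\overline{a}^E)$, and you never actually develop the ``parametric analogs'' you say are required. As written this is a strategy with an acknowledged hole, not a proof.

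The idea you are missing is simple: $\overline{a}^E$ is not merely $\overline{a}$-definable, it is $\overline{b}$-definable for \emph{every} $\overline{b}\in\overline{a}^E$. Hence any automorphism fixing some element of $\overline{a}^E$ already fixes the whole class setwise. This lets one move elements of $\overline{a}^E$ around without ever needing to control types over $\overline{a}$, and it makes all talk of parametric versions of Lemma~2.4 unnecessary. The paper exploits this as follows. Since the desired conclusion is a first-order property of $\overline{a}$, one may work in $\mathbb{N}$, where all supports are finite; there take $\overline{a}'\in\overline{a}^E$ with $|\sigma(\overline{a})\cap\sigma(\overline{a}')|$ minimal. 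If this intersection strictly contains $P$, pick $p$ in the difference and some $\overline{a}''\in\overline{a}^E$ with $p\notin\sigma(\overline{a}'')$, then swap $p$ with a fresh prime $q$ via an automorphism $f$ of $\mathbb{N}$ fixing all other primes in $\sigma(\overline{a})\cup\sigma(\overline{a}')\cup\sigma(\overline{a}'')$. Since $f$ fixes $\overline{a}''$, it fixes $\overline{a}^E$ setwise, so $f(\overline{a}')\in\overline{a}^E$; but $\sigma(\overline{a})\cap\sigma(f(\overline{a}'))$ has lost the prime $p$, contradicting minimality. No gluing, no parametric lemmas, no induction. (Transferring to $\mathbb{N}$ also sidesteps a secondary worry with your saturation step: the parameter set $\sigma(\overline{a})\setminus P$ of your partial type need not be small in $\mathbb{M}$.)
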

\begin{proof}
It suffices to show that $$\mathbb{N}\models\forall \overline{v}\exists \overline{v}'\left[E(\overline{v},\overline{v}')\wedge \left(\sigma(\overline{v})\cap\sigma(\overline{v}')=\sigma(\gamma(\overline{v}^E))\right)\right],$$ so we may assume that $\overline{a}\in\mathbb{N}^n$ and that $E\subseteq\mathbb{N}^n\times\mathbb{N}^n$. Let $\overline{a}'\in \overline{a}^E$ be such that $|\sigma(\overline{a})\cap\sigma(\overline{a}')|$ is minimal possible; we claim that $\overline{a}'$ has the desired property. If not, then there exists some $\overline{a}''\in \overline{a}^E$ such that $\sigma(\overline{a})\cap\sigma(\overline{a}')\nsubseteq\sigma(\overline{a}'')$; let $p\in\left(\sigma(\overline{a})\cap\sigma(\overline{a}')\right)\setminus\sigma(\overline{a}'')$, and let $q\in\mathbb{P}\cap\mathbb{N}$ be any prime not in $\sigma(\overline{a})\cup\sigma(\overline{a}')\cup\sigma(\overline{a}'')$. Now we may find an automorphism $f:\mathbb{N}\to\mathbb{N}$ swapping $p$ and $q$ and fixing all other primes in $\sigma(\overline{a})\cup\sigma(\overline{a}')\cup\sigma(\overline{a}'')$. Since $p\notin\sigma(\overline{a}'')$, $f$ fixes $\overline{a}''$, and hence fixes $\overline{a}^E$ setwise. In particular, $f(\overline{a}')\in a^E$. But also $$\sigma(\overline{a})\cap\sigma(f(\overline{a}'))=\sigma(\overline{a})\cap\sigma(\overline{a}')\setminus\{p\},$$ so this contradicts minimality of $|\sigma(\overline{a})\cap\sigma(\overline{a}')|$, as desired.
\end{proof}

\begin{corollary} $\overline{a}^E$ is definable over $\overline{a}_{\sigma(\gamma(\overline{a}^E))}$. Conversely, $\sigma(\gamma(\overline{a}^E))$ is the smallest prime support over which $\overline{a}^E$ is definable; in other words, if $\overline{a}^E$ is $\overline{c}$-definable for some tuple $\overline{c}$, then $\sigma(\gamma(\overline{a}^E))\subseteq\sigma(\overline{c})$.
\end{corollary}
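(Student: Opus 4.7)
The plan is to split the corollary into its two assertions and handle them separately.

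For the forward direction, that $\overline{a}^E$ is definable over $\overline{a}_{\sigma(\gamma(\overline{a}^E))}$, I would apply Lemma 2.11 directly to the tuples $\overline{a}$ and $\overline{a}'$, where $\overline{a}'\in\overline{a}^E$ is the element produced by Lemma 4.3, so that $\sigma(\overline{a})\cap\sigma(\overline{a}')=\sigma(\gamma(\overline{a}^E))$. Since $E$ is $\varnothing$-definable and symmetric, $\overline{a}^E=\overline{a}'{}^E$ is defined both by $E(\overline{v},\overline{a})$ and by $E(\overline{v},\overline{a}')$, so is both $\overline{a}$-definable and $\overline{a}'$-definable. Lemma 2.11 (with $\overline{c}^1=\overline{a}$ and $\overline{c}^2=\overline{a}'$) then immediately gives $\overline{a}_{\sigma(\gamma(\overline{a}^E))}$-definability.

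For the converse, suppose $\overline{a}^E$ is $\overline{c}$-definable, and for contradiction pick a prime $p\in\sigma(\gamma(\overline{a}^E))\setminus\sigma(\overline{c})$. I would then choose a prime $q\in\mathbb{P}$ with $q\notin\sigma(\overline{c})\cup\sigma(\overline{a})$. Since $p\perp\overline{c}$, $q\perp\overline{c}$, and $p\equiv q$ (any two primes being conjugate), Lemma 2.4 yields $p\equiv_{\overline{c}}q$, and so by homogeneity of $\mathbb{M}$ there exists an automorphism $f$ fixing $\overline{c}$ pointwise with $f(q)=p$. Since $\overline{a}^E$ is $\overline{c}$-definable, $f(\overline{a})\in\overline{a}^E$. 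On the other hand, $f(\overline{a})_{p^\infty}=f(\overline{a}_{q^\infty})=\overline{1}$, since $q\notin\sigma(\overline{a})$, so $p\notin\sigma(f(\overline{a}))$, contradicting $p\in\sigma(\gamma(\overline{a}^E))=\bigcap_{\overline{b}\in\overline{a}^E}\sigma(\overline{b})$.

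Neither step poses a real obstacle: the first is a direct combination of Lemma 4.3 and Lemma 2.11, and the second is a short automorphism-chasing argument. The only point requiring care is the existence of the automorphism $f$, which is handled by Lemma 2.4 applied to the pair $p,q$ together with $\overline{c}$.
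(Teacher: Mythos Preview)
Your proposal is correct and follows essentially the same argument as the paper: the first part combines Lemma 4.3 with Lemma 2.11 exactly as the paper does, and the second part uses the same automorphism trick with a prime $q\notin\sigma(\overline{a})\cup\sigma(\overline{c})$ to move $\overline{a}$ off the prime $p$. The only cosmetic difference is that the paper asserts the existence of an automorphism fixing $\overline{c}$ and swapping $p$ and $q$ without further comment, whereas you justify the needed conjugacy $p\equiv_{\overline{c}}q$ explicitly via Lemma 2.4.
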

\begin{proof}
First we show the first statement. By Lemma 4.3, we may find an element $\overline{a}'\in a^E$ such that $\sigma(\overline{a})\cap\sigma(\overline{a}')=\sigma(\gamma(\overline{a}^E))$. Since $\overline{a}^E$ is $\overline{a}'$-definable, by Lemma 2.11 $\overline{a}^E$ is also $\overline{a}_{\sigma(\overline{a}')}$-definable, and the desired result follows.

For the second statement, suppose $\overline{c}$ is otherwise, and let $p\in\sigma(\gamma(\overline{a}^E))\setminus\sigma(\overline{c})$. Now let $q$ be any prime outside of $\sigma(\overline{a})\cup\sigma(\overline{c})$. We may find an automorphism $f$ that fixes $\overline{c}$ and swaps $p$ and $q$. Since $f$ fixes $\overline{c}$, it must also fix $\overline{a}^E$ setwise. On the other hand, $p\notin \sigma(f(\overline{a}))$, so since $p\in\sigma(\gamma(\overline{a}^E))$ we have $f(\overline{a})\notin \overline{a}^E$, a contradiction.
\end{proof}

The following two lemmas show that every $\sqrt{\mathbb{M}}$-definable set has an almost canonical parameter; combined with stable embeddedness of $\sqrt{\mathbb{M}}$, this is enough to show that every $\mathbb{M}$-definable subset of $\sqrt{\mathbb{M}}{}^m$ has an almost canonical parameter.

\begin{lemma}
If $D\subseteq\mathbb{M}^n$ is $\sqrt{\mathbb{M}}$-definable, then there exists a tuple $\overline{r}\in\sqrt{\mathbb{M}}{}^m$ over which $D$ is definable, such that (i) $r_i\perp r_j$ for each $i\neq j$, and (ii) $\sigma(\overline{r})$ is the smallest prime support over which $D$ is definable.
\end{lemma}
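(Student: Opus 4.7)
The plan is to first locate the smallest prime support over which $D$ can be defined, using the equivalence-relation machinery of Corollary 4.4, and then to realize that support by a pairwise coprime radical tuple via the atom-decomposition idea of Remark 2.3.

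First I would fix a formula $\phi(\overline{v};\overline{w})$ and a radical tuple $\overline{s}\in\sqrt{\mathbb{M}}{}^{|\overline{w}|}$ with $D=\phi(\mathbb{M}^n;\overline{s})$, which exist by the hypothesis that $D$ is $\sqrt{\mathbb{M}}$-definable. Then I would introduce the $\varnothing$-definable equivalence relation $$E(\overline{w}_1,\overline{w}_2)\equiv\forall\overline{v}\left[\phi(\overline{v};\overline{w}_1)\leftrightarrow\phi(\overline{v};\overline{w}_2)\right]$$ on $\mathbb{M}^{|\overline{s}|}$, and observe that, for any tuple $\overline{c}$, the set $D$ is $\overline{c}$-definable if and only if the equivalence class $\overline{s}^E$ is: indeed, from $D=\psi(\mathbb{M}^n;\overline{c})$ one reads off $\overline{s}^E=\{\overline{w}:\forall\overline{v}[\phi(\overline{v};\overline{w})\leftrightarrow\psi(\overline{v};\overline{c})]\}$, and conversely given $\overline{s}^E=\chi(\mathbb{M}^{|\overline{s}|};\overline{c})$ the set $D$ is defined over $\overline{c}$ by $\forall\overline{w}[\chi(\overline{w};\overline{c})\to\phi(\overline{v};\overline{w})]$. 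Applying Corollary 4.4 to $\overline{s}^E$ and setting $P:=\sigma(\gamma(\overline{s}^E))$, I conclude that $\overline{s}^E$, and hence $D$, is $\overline{s}_P$-definable, and that $P\subseteq\sigma(\overline{c})$ for every tuple $\overline{c}$ defining $D$. Since $\overline{s}\in\overline{s}^E$, Lemma 2.12 gives $P\subseteq\sigma(\overline{s})$, so $\overline{s}_P$ is a radical tuple with support exactly $P$.

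Writing the components of $\overline{s}_P$ as $s_1,\dots,s_k$, I would finish by decomposing into atoms: for each non-empty $I\subseteq[k]$, set $A_I=\{p\in P:p\in\sigma(s_i)\text{ iff }i\in I\}$, so that the non-empty $A_I$ partition $P$. Each $A_I$ is $\mathbb{M}$-definable and contained in $\sigma(\overline{s})$, so by Remark 2.3 there is a radical $r_I\in\sqrt{\mathbb{M}}$ with $\sigma(r_I)=A_I$. Collecting the $r_I$ into a tuple $\overline{r}$ yields a pairwise coprime radical tuple with $\sigma(\overline{r})=P$; since $s_i=\prod\{r_I:i\in I\}\in\dcl(\overline{r})$ for each $i$, the set $D$ is $\overline{r}$-definable, and by the conclusion of Corollary 4.4 the support $P$ is the smallest over which $D$ can be defined.

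There is no serious obstacle here; the only step requiring any real care is the mutual $\overline{c}$-definability of $D$ and of $\overline{s}^E$, which is a purely formal manipulation. Once it is in place, Corollary 4.4 delivers the minimum support and Remark 2.3 supplies the atomic decomposition needed for condition (i).
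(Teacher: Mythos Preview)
Your argument is correct and follows essentially the paper's approach: apply Corollary~4.4 to locate the minimal support $P=\sigma(\gamma(\overline{s}^E))$ and then perform an atom decomposition of $\overline{s}_P$ into pairwise coprime radicals. The only cosmetic difference is that the paper first packs $\overline{s}$ into the single element $s=s_1\cdot s_2^2\cdots s_k^{2^{k-1}}$ before invoking Corollary~4.4, so that its atoms are indexed by $i\in[2^k-1]$ rather than by non-empty $I\subseteq[k]$ (these indexings correspond via binary expansion); note also that your final identity should read $(s_i)_P=\prod\{r_I:i\in I\}$ rather than $s_i=\prod\{r_I:i\in I\}$, though this does not affect the conclusion since $D$ is already $\overline{s}_P$-definable.
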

\begin{proof}
Suppose $D$ is definable over a $k$-tuple $\overline{s}\in\sqrt{\mathbb{M}}{}^k$. By uniqueness of base $2$ representations, the tuple $\overline{s}$ is interdefinable with the element $$s:=s_1\cdot s_2^2\cdot\ldots\cdot s_k^{2^{k-1}}\in\mathbb{M},$$ so there exists a formula $\phi(\overline{v};w)$ with $D=\phi(\mathbb{M}^n;s)$. As usual, take $$E(w_1,w_2)\equiv \forall\overline{v}\left[\phi(\overline{v};w_1)\leftrightarrow\phi(\overline{v};w_2)\right].$$ By Corollary 4.4, $s^E$ and hence $D$ are definable over $s':=s_{\sigma(\gamma(s^E))}$. Since also $s'\mid s$, for every $p\in\sigma(s')$ we have $s'_{p^\infty}=p^i$ for some $i\in[2^{k}-1]$. Thus take $m=2^{k}-1$ and define a radical element $r_i=\prod\left\{p\in\sigma(s'):s'_{p^\infty}=p^i\right\}$ for each $i\in[m]$; then $s'$ is definable over $\overline{r}$, $r_i\perp r_j$ for each $i\neq j$, and $\sigma(\overline{r})=\sigma(s')=\sigma(\gamma(s^E))$ is the smallest support over which $D$ is definable, again by Corollary 4.4, so this gives the desired result.
\end{proof}

\begin{lemma}
If $D\subseteq\mathbb{M}^n$ is definable over some tuple $\overline{r}\in\sqrt{\mathbb{M}}{}^m$, where the $r_i$ satisfy the conditions of Lemma 4.5 and $m$ is minimal possible, then every automorphism fixing $D$ setwise permutes the $r_i$.
\end{lemma}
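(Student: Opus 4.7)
Let $f$ be an automorphism of $\mathbb{M}$ fixing $D$ setwise; the goal is to show $f$ permutes the $r_i$, or equivalently, that $f$ preserves the partition $\{\sigma(r_i)\}_i$ of $\sigma(\overline{r})$ setwise. The first step is to observe that $\overline{r}':=f(\overline{r})=(f(r_1),\dots,f(r_m))$ is another tuple satisfying the hypotheses of Lemma 4.5: it consists of radicals with pairwise disjoint supports (these being first-order properties preserved by $f$), defines $D$ (as $f(D)=D$), and satisfies condition (ii). The last point uses Corollary 4.4 — which characterizes $\sigma(\overline{r})$ as the unique smallest prime support over which $D$ is definable — applied to both $\overline{r}$ and $\overline{r}'$, giving $\sigma(\overline{r}')=\sigma(\overline{r})$; in particular, $f$ restricts to a bijection of $\sigma(\overline{r})$, and $\{\sigma(f(r_j))\}_j$ is a second partition of the same underlying set.

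Next, I would form the common refinement of these two partitions: for each $(i,j)\in[m]^2$ with $P_{ij}:=\sigma(r_i)\cap\sigma(f(r_j))\neq\varnothing$, set $t_{ij}:=\prod P_{ij}$, and let $\overline{t}$ be the resulting tuple, of length $k$ equal to the number of non-empty $P_{ij}$. Then $\overline{t}$ consists of pairwise coprime radicals, has total support $\sigma(\overline{r})$, and defines $D$, since the identity $r_i=\prod_jt_{ij}$ exhibits $\overline{r}$ as definable over $\overline{t}$. Thus $\overline{t}$ satisfies the conditions of Lemma 4.5, and by minimality of $m$ in the hypothesis of 4.6 we have $k\geq m$. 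Combinatorially, since each row and each column of the $m\times m$ matrix $(P_{ij})$ contains at least one non-empty entry, $k\geq m$ also follows from the pigeonhole principle, with equality holding precisely when the non-empty cells form a permutation matrix. In the equality case, there is a permutation $\pi$ of $[m]$ with $\sigma(r_i)=P_{i,\pi(i)}\subseteq f(\sigma(r_{\pi(i)}))$, and a size argument using bijectivity of $f$ on $\sigma(\overline{r})$ upgrades this to $\sigma(r_i)=f(\sigma(r_{\pi(i)}))$; since a radical is determined by its prime support, this gives $r_i=f(r_{\pi(i)})$ for each $i$, as desired.

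The crux, and the main obstacle, is therefore showing $k=m$ rather than merely $k\geq m$. My plan is to argue by contradiction: assume $k>m$. Then some row of the matrix $(P_{ij})$ contains at least two non-empty entries, and the strategy is to exploit this to produce a valid tuple of pairwise coprime radicals defining $D$ of length strictly less than $m$, contradicting minimality. Both $\overline{r}$ and $f(\overline{r})$ are length-$m$ coarsenings of $\overline{t}$ via row- and column-merges respectively; the key leverage is to show that, when $k>m$, a suitably interpolating coarsening of $\overline{t}$ — not of this "pure row" or "pure column" form — still defines $D$ while having fewer than $m$ components. Equivalently, the crux amounts to uniqueness of the minimum valid partition of $\sigma(\overline{r})$, which would immediately force $\{\sigma(f(r_j))\}_j$ to coincide with $\{\sigma(r_i)\}_i$ as a partition and conclude the proof.
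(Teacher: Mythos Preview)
Your setup is correct and matches the paper's: you correctly observe that $f(\overline r)$ is another radical tuple with pairwise disjoint components and the same total support $\sigma(\overline r)$ (this uses Corollary~4.4, as you say), and your reduction to the question ``does the common refinement $\overline t$ have length exactly $m$?'' is valid. The combinatorial endgame in the $k=m$ case is also fine.

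The gap is exactly where you yourself flag it: you do not prove that $k>m$ leads to a valid tuple of length $<m$. Your phrasing ``a suitably interpolating coarsening of $\overline t$ \dots\ still defines $D$'' is a hope, not an argument. It is \emph{not} automatic that an arbitrary coarsening of $\overline t$ defines $D$; the only coarsenings you know to work are the row-merge $\overline r$ and the column-merge $f(\overline r)$, both of length $m$. Your alternative framing---``uniqueness of the minimum valid partition''---is just a restatement of the lemma, not a step toward proving it. Concretely, to show $D$ is definable over (say) $\{r_1\cdot r_2,r_3,\dots,r_m\}$ you must show that every automorphism fixing this coarser tuple fixes $D$ setwise, and such automorphisms need not fix either $\overline r$ or $f(\overline r)$, so there is real work to do.

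The paper fills this gap by a substantial argument that your proposal does not approach. Having located indices (say $1,2$) with $\sigma(\alpha(r_k))$ meeting both $\sigma(r_1)$ and $\sigma(r_2)$, the paper proves by induction (Claim~1) that for any finite $P_1\subseteq\sigma(r_1)$, $P_2\subseteq\sigma(r_2)$ of equal size there is an automorphism fixing $D$ setwise that swaps $P_1$ and $P_2$ while fixing $\overline r_{\neg(P_1\cup P_2)}$; this is built by alternately composing automorphisms that fix $\overline r$ with ones that fix $\alpha(\overline r)$. A second step (Claim~2) uses a $\preccurlyeq$-minimality and compactness argument to force one of the relevant blocks to be finite, after which the conclusion that $D$ is $\{r_1\cdot r_2,r_3,\dots,r_m\}$-definable follows. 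None of this is hinted at in your proposal, and it is the heart of the proof.
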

\begin{proof}
Let $D=\phi(\mathbb{M}^n;\overline{r})$ for some formula $\phi(\overline{v};\overline{w})$, and suppose for contradiction that $\alpha$ is an automorphism fixing $D$ setwise but not permuting the $r_i$. Since $\sigma(\overline{r})$ is the smallest support over which $D$ is definable, we have $\sigma(\alpha(\overline{r}))=\sigma(\overline{r})$. Since also $r_i\perp r_j$ for each $i\neq j$, there must hence be some $k\in[m]$ such that $\sigma(\alpha(r_k))\nsubseteq\sigma(r_i)$ for any $i\in[m]$. Thus $\sigma(\alpha(r_k))$ has non-empty intersection with at least two of the $\sigma(r_i)$; without loss of generality suppose there exists $q_l\in\sigma(\alpha(r_k))\cap\sigma(r_l)$ for each $l\in\{1,2\}$. We will show that $D$ is definable over $\{r_1\cdot r_2,r_3,\dots,r_m\}$, which will contradict minimality of $m$ and hence give the desired result. First we require the following:

\begin{claim}
For any finite subsets $P_1\subseteq\sigma(r_1)$ and $P_2\subseteq\sigma(r_2)$ with $|P_1|=|P_2|$, there is an automorphism fixing $D$ setwise, swapping $P_1$ and $P_2$, and fixing $\overline{r}_{\neg(P_1\cup P_2)}$.
\end{claim}
\begin{claimproof}
Let $i=|P_l|$ for $l\in\{1,2\}$; we prove the claim by induction on $i$. The case $i=0$ is trivial, since we may simply take the automorphism to be the identity. Hence suppose each $|P_l|=i+1$, so that $P_l=P_l'\sqcup\{p_l\}$ for some $P_l'\subset P_l$ of size $i$. Assume without loss of generality that $q_l\notin P'_l$ for either $l$, so that if $q_l\in P_l$ then $q_l=p_l$. Now, by the induction hypothesis we may find an automorphism $f$ that has the desired properties with respect to $P_1'$ and $P_2'$. Since $f$ fixes $\overline{r}_{\neg(P_1'\cup P_2')}$ and $p_l,q_l\notin P_1'\cup P_2'$ for either $l\in\{1,2\}$, we have $p_l,q_l\in\sigma(f(r_l))$ for each $l\in\{1,2\}$. Thus we may first find an automorphism $f'$ that fixes $f(\overline{r})$ and fixes $P_1'\cup P'_2$ pointwise, and swaps each $p_l$ with each $f(p_l)$, and then similarly find an automorphism $g$ that fixes $f(\overline{r})$ and fixes $P'_1\cup P'_2$ pointwise, and swaps each $p_l$ with each $q_l$. Since $f$ fixes $D$ setwise, $D$ is $f(\overline{r})$-definable, so both $f'$ and $g$ also fix $D$ setwise. On the other hand, $q_1,q_2\in\sigma(\alpha(r_k))$, so we may find an automorphism $h$ swapping $q_1$ and $q_2$, fixing $\alpha(\overline{r})$ and $f(\overline{r})_{\neg\{q_1,q_2\}}$, and fixing $P_1\cup P_2\setminus\{q_1,q_2\}$ pointwise; since $h$ fixes $\alpha(\overline{r})$, it also fixes $D$ setwise. Thus let $f''=g\circ h\circ g$; we claim that $f''\circ f'\circ f$ has the desired properties. Each of $f$, $f'$, and $f''$ fix $D$ setwise, so $f''\circ f'\circ f$ does too. Since $f''$ and $f'$ fix $P'_1\cup P'_2$ pointwise, and $f$ swaps $P_1'$ and $P_2'$, $f''\circ f'\circ f$ swaps $P'_1$ and $P_2'$ too. On the other hand, we have \begin{align*}(f''\circ f'\circ f)(p_l)&=(g\circ h\circ g)(f'(f(p_l)))\\&=(g\circ h\circ g)(p_l)\\&=(g\circ h)(q_{l})=g(q_{3-l})=p_{3-l}
\end{align*}for each $l\in\{1,2\}$, so that in fact $f''\circ f'\circ f$ swaps $P_1$ and $P_2$. For the final property, note that $f$ fixes $\overline{r}_{\neg(P_1'\cup P_2')}$, and that $f'$ fixes $f(\overline{r})$ and $P'_1\cup {P_2'}$, so that $f'$ fixes $\overline{r}_{\neg(P'_1\cup P'_2)}$. But $f'$ also swaps each $p_l$ and each $f(p_l)$, so in fact $f'\circ f$ fixes $\overline{r}_{\neg (P_1\cup P_2)}$. Similarly, $g\circ h\circ g$ fixes $f(\overline{r})_{\neg\{p_1,p_2\}}$ and $P'_1\cup P'_2$, and thus fixes $\overline{r}_{\neg(P_1\cup P_2)}$ too, as desired.
\end{claimproof}

A corollary is that, for any finite subsets $P_1,P_2\subseteq\sigma(r_1)\cup\sigma(r_2)$ with $|P_1|=|P_2|$, there is an automorphism fixing $D$ setwise, swapping $P_1$ and $P_2$, and fixing $\overline{r}_{\neg(P_1\cup P_2)}$. Now, consider the $\mathbb{M}$-definable set 
\begin{align*}
X:=\big\{r'_1r'_2\in\sqrt{\mathbb{M}}{}^2:\ &r'_1\cdot r'_2=r_1\cdot r_2,r'_1 r'_2\preccurlyeq r_1r_2,r'_1\perp r'_2,\\&\text{ and }D=\phi(\mathbb{M}^n;r'_1,r'_2,r_3,\dots,r_m)\big\}.
\end{align*} By Lemma 2.8, $X$ has a $\preccurlyeq$-minimal element $r'_1r'_2$. Since $r'_1r'_2\preccurlyeq r_1r_2$, we have $\sigma(r'_1)\subseteq\sigma(r_1)$; since also $r_1'\cdot r_2'=r_1\cdot r_2$, we have $\sigma(r'_2)\supseteq\sigma(r_2)$ as well.

\begin{claim}
One of $\sigma(r'_1)$ and $\sigma(r'_2)$ is finite.
\end{claim}
\begin{claimproof}
Suppose for contradiction that both sets are infinite. Then, by Claim 1 and compactness, we may find a $2$-tuple $r''_1r''_2\in\sqrt{\mathbb{M}}{}^2$ that is conjugate to $r_1r_2$ and such that: (i) $r''_1\cdot r''_2=r_1\cdot r_2$, (ii) $D=\phi(\mathbb{M}^n;r''_1,r''_2,r_3,\dots,r_m)$, and (iii) $P_l:=\sigma(r'_l)\cap\sigma(r''_1)$ is infinite for each $l\in\{1,2\}$. Now, choose any $p\in P_1$. By Lemma 2.4 and Lemma 2.5, we have $$r''_{1,P_1}r''_{1,P_2}\equiv r''_{1,P_1\setminus\{p\}}r''_{1,P_2\cup\{p\}},$$ and we may thus find an automorphism $f$ taking the first tuple to the second and fixing all of the following tuples: $r_1''r_2''$ and $r_3\dots r_m$ and $(r_1'r_2')_{\neg\sigma(r''_1)}$. Since $f$ fixes $r''_1r''_2r_3\dots r_m$, by condition (ii) $f$ fixes $D$ setwise, whence $D=\phi(\mathbb{M}^n;f(r'_1),f(r'_2),r_3,\dots,r_m)$. Also, $f$ fixes $r_1\cdot r_2=r''_1\cdot r''_2$, so $f(r'_1)\cdot f(r'_2)=r_1\cdot r_2$ as well. On the other hand, $f$ fixes $\sigma(r_1')\setminus\sigma(r''_1)$ setwise, and $f(P_1)=P_1\setminus\{p\}$. Thus $\sigma(f(r'_1))=\sigma(r'_1)\setminus\{p\}$, whence $f(r'_1)f(r'_2)\prec r'_1r'_2$, so $f(r'_1)f(r'_2)$ lies in $X$ and contradicts $\preccurlyeq$-minimality of $r'_1r'_2$ in $X$.
\end{claimproof}

Now we can show that $D$ is $\{r_1\cdot r_2,r_3,\dots,r_m\}$-definable; to see this, let $f$ be any automorphism fixing the latter set pointwise. By Claim 2, one of $\sigma(r'_1)$ and $\sigma(r'_2)$ is finite. If the latter is finite, then the subset $\sigma(r_2)\subseteq\sigma(r'_2)$ is also finite, and by the remark following Claim 1 we can find an automorphism $g$ that fixes the product $r_1\cdot r_2$ and the tuple $r_3\dots r_m$, fixes $D$ setwise, and takes $\sigma(f(r_2))$ to $\sigma(r_2)$. Then $g\circ f$ fixes $\overline{r}$, hence fixes $D$ setwise, and so $f$ fixes $D$ setwise as well. The case when $\sigma(r'_1)$ is finite is nearly identical; by the remark following Claim 1, we can find an automorphism $g$ that fixes the product $r_1\cdot r_2$ and the tuple $r_3\dots r_m$, fixes $D$ setwise, and takes $\sigma(f(r'_1))$ to $\sigma(r'_1)$. Then $g\circ f$ fixes $r'_1r'_2r_3\dots r_m$, hence fixes $D$ setwise by definition of $X$, and so $f$ again fixes $D$ setwise. In either case, we have shown that $D$ is $\{r_1\cdot r_2,r_3,\dots,r_m\}$-definable, which contradicts minimality of $m$ and thus gives the desired result.
\end{proof}

\begin{corollary}
Any $\mathbb{M}$-definable subset of $\sqrt{\mathbb{M}}{}^n$ has an almost canonical parameter.
\end{corollary}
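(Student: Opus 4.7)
The plan is to package together the three heavy results that have just been proved: stable embeddedness of $\sqrt{\mathbb{M}}$ (Lemma 3.1), the structure lemma for $\sqrt{\mathbb{M}}$-definable sets (Lemma 4.5), and the permutation lemma for automorphisms (Lemma 4.6). Once these are in hand, the corollary should essentially fall out of the definitions.

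So, let $D\subseteq\sqrt{\mathbb{M}}{}^n$ be $\mathbb{M}$-definable. First I would apply Lemma 3.1: since $\sqrt{\mathbb{M}}$ is stably embedded and $D$ is an $\mathbb{M}$-definable subset of $(\sqrt{\mathbb{M}})^n$, the set $D$ is in fact definable with parameters from $\sqrt{\mathbb{M}}$. This is the one step that uses that we are working inside $\sqrt{\mathbb{M}}{}^n$ rather than $\mathbb{M}^n$; without it we have no access to a defining tuple of radicals, and Lemmas 4.5 and 4.6 do not apply.

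Next I would invoke Lemma 4.5 to produce a tuple $\overline{r}\in\sqrt{\mathbb{M}}{}^m$ over which $D$ is definable, with pairwise coprime components $r_i\perp r_j$ and with $\sigma(\overline{r})$ equal to the smallest prime support over which $D$ is definable. Among all such tuples satisfying the conditions of Lemma 4.5, I would choose one of minimal length $m$; this is the hypothesis required to feed into Lemma 4.6, and there is no obstacle to minimizing $m$ since it is a natural number. Then Lemma 4.6 applies directly and tells us that every automorphism fixing $D$ setwise permutes the elements $r_1,\dots,r_m$.

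Finally, set $X:=\{r_1,\dots,r_m\}$. This is a finite subset of $\mathbb{M}$ over which $D$ is definable (by construction of $\overline{r}$), and by the previous paragraph every automorphism fixing $D$ setwise fixes $X$ setwise. By the definition of almost canonical parameter recalled at the start of Section 4, $X$ is therefore an almost canonical parameter for $D$. The only mild subtlety is remembering that an almost canonical parameter is required to be fixed \emph{setwise} (not pointwise), which is precisely what Lemma 4.6 delivers; there is no genuine obstacle here.
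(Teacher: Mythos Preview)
Your proposal is correct and follows essentially the same route as the paper: apply Lemma 3.1 to pass to $\sqrt{\mathbb{M}}$-definability, then Lemma 4.5 to obtain a suitable $\overline{r}$, minimize $m$, apply Lemma 4.6, and take $X=\{r_1,\dots,r_m\}$. If anything, you are slightly more explicit than the paper in noting that $m$ must be chosen minimal before invoking Lemma 4.6.
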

\begin{proof}
Let $D\subseteq\sqrt{\mathbb{M}}{}^n$ be $\mathbb{M}$-definable. By Lemma 3.1, ie since $\sqrt{\mathbb{M}}$ is stably embedded, $D$ is $\sqrt{\mathbb{M}}$-definable. Thus, by Lemma 4.5 and Lemma 4.6, $D$ is definable over some $\overline{r}\in\sqrt{\mathbb{M}}{}^m$ such that every automorphism of $\mathbb{M}$ fixing $D$ setwise permutes the $r_i$. So $\{r_1,\dots,r_m\}$ is an almost canonical parameter for $D$, and the result follows.
\end{proof}

Now return to the case where $E\subseteq\mathbb{M}^n\times\mathbb{M}^n$ is an arbitrary $\varnothing$-definable equivalence relation of $n$-tuples and $\overline{a}\in \mathbb{M}^n$ is an $n$-tuple. Equipped with Corollary 4.7, in order to show that $\overline{a}^E$ has an almost canonical parameter, we need only reduce the question to showing that a suitable definable subset of $\sqrt{\mathbb{M}}{}^m$ has an almost canonical parameter. We do this via the lemmas on definability proven in Section 2.4.

\begin{lemma}
$\overline{a}^E$ is definable over $\left\{\gamma(\overline{a}^E)\right\}\cup\sqrt{\mathbb{M}}$.
\end{lemma}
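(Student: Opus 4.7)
My goal is to locate a representative $\overline{a}^\ast\in\overline{a}^E$ that itself lies in $\dcl(\{\gamma(\overline{a}^E)\}\cup\sqrt{\mathbb{M}})$; once this is done, $\overline{a}^E=(\overline{a}^\ast)^E$ is $\overline{a}^\ast$-definable and hence definable over $\{\gamma(\overline{a}^E)\}\cup\sqrt{\mathbb{M}}$, as required. By Lemma 2.10, to put $\overline{a}^\ast$ into this dcl it suffices that $\overline{a}^\ast_{p^\infty}\in\dcl\{\gamma(\overline{a}^E)_{p^\infty},p\}$ for every prime $p$; the natural targets are $\overline{a}^\ast_{p^\infty}=\overline{1}$ for $p\notin\sigma(\gamma(\overline{a}^E))$ and $\overline{a}^\ast_{p^\infty}=\gamma(\overline{a}^E)_{p^\infty}$ for $p\in\sigma(\gamma(\overline{a}^E))$.

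To produce $\overline{a}^\ast$, I would first apply Lemma 4.3 to pass to some $\overline{a}'\in\overline{a}^E$ with $\sigma(\overline{a})\cap\sigma(\overline{a}')=\sigma(\gamma(\overline{a}^E))$, so that outside $\sigma(\gamma(\overline{a}^E))$ the support of $\overline{a}'$ is disjoint from that of $\overline{a}$; by Corollary 4.4 the class $\overline{a}^E$ is then already definable over $\overline{a}'_{\sigma(\gamma(\overline{a}^E))}$. Now take a $\preccurlyeq$-minimal element $\overline{d}$ of the $\overline{a}'$-definable set $\overline{a}^E$ via Lemma 2.8; Lemma 2.9 gives $\overline{d}_{p^\infty}\in\dcl\{\overline{a}'_{p^\infty},p\}$ at each $p$. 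One then argues that, thanks to the minimality of $\gamma(\overline{a}^E)_{p^\infty}$ as the $\preccurlyeq$-infimum of $\{\overline{y}_{p^\infty}:\overline{y}\in\overline{a}^E\}$, the $p$-parts of $\overline{d}$ can in fact be forced into $\dcl\{\gamma(\overline{a}^E)_{p^\infty},p\}$ — equal to $\overline{1}$ outside $\sigma(\gamma(\overline{a}^E))$ and to $\gamma(\overline{a}^E)_{p^\infty}$ inside it — by choosing $\overline{d}$ from an appropriate $\overline{a}'$-definable sub-class of $\overline{a}^E$ whose elements already realize $\gamma$'s prime-local minima as much as possible.

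The main obstacle, as I see it, is that the elements of $\overline{a}^E$ achieving the local minimum $\gamma(\overline{a}^E)_{p^\infty}$ at $p$ may differ from those achieving it at another prime $q$, and it is not immediate that these prime-local minimizers can be glued into a single element of $\overline{a}^E$. I expect to resolve this using the coprimacy-based conjugacy lemmas of Section 2.3 (Lemmas 2.4 and 2.6), invoking Theorem A.4 to confirm that the glued tuple remains $E$-equivalent to $\overline{a}$: since $E$ is a Boolean combination of prime-counting formulas $\theta^{\geqslant n}$, whose truth values depend only on how many primes witness a given prime-local formula, they are stable under rearrangements of the witnesses across disjoint supports. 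This is analogous in spirit to the prime-by-prime gluing argument appearing in the proof of Lemma 4.6.
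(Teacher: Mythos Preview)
Your plan has the right skeleton --- pick a $\preccurlyeq$-minimal $\overline{d}\in\overline{a}^E$, show $\overline{d}_{p^\infty}\in\dcl\{\gamma(\overline{a}^E)_{p^\infty},p\}$ for every $p$, then invoke Lemma 2.10 --- but your route to that last step is aimed at the wrong target and rests on an argument that does not go through. Your ``natural targets'' $\overline{d}_{p^\infty}=\overline{1}$ off $\sigma(\gamma(\overline{a}^E))$ and $\overline{d}_{p^\infty}=\gamma(\overline{a}^E)_{p^\infty}$ on it amount to asking that $\gamma(\overline{a}^E)$ itself lie in $\overline{a}^E$, which is false in general (e.g.\ if $E$ identifies elements according to whether they have exactly two distinct prime divisors, then for $a=p_1p_2$ one has $\gamma(\overline{a}^E)=1\notin\overline{a}^E$). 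And the gluing you propose --- splicing the $p$-part of one class representative with the $q$-part of another --- is not what Lemmas 2.4 and 2.6 license: those lemmas let you replace a \emph{coprime factor} of a single tuple, whereas two elements of $\overline{a}^E$ need not have disjoint supports, so the prime-counts witnessing the formulas $\theta^{\geqslant n}$ comprising $E$ can genuinely change under such a splice. There is no reason to expect the glued tuple to remain in $\overline{a}^E$.

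The observation you are missing, which makes both the gluing and the detour through Lemma 4.3/Corollary 4.4 unnecessary, is that in Lemma 2.9 the defining parameter may be chosen \emph{prime by prime}. The class $\overline{a}^E$ is definable over every one of its elements, not just over one fixed $\overline{a}'$. For each $p$, the definition of $\gamma$ supplies some $\overline{a}''\in\overline{a}^E$ (depending on $p$) with $\overline{a}''_{p^\infty}=\gamma(\overline{a}^E)_{p^\infty}$; since $\overline{a}^E$ is $\overline{a}''$-definable and $\overline{d}$ is $\preccurlyeq$-minimal in it, Lemma 2.9 gives directly $\overline{d}_{p^\infty}\in\dcl\{\overline{a}''_{p^\infty},p\}=\dcl\{\gamma(\overline{a}^E)_{p^\infty},p\}$. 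That is the entire argument in the paper; no gluing is needed.
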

\begin{proof}
By Lemma 2.8, let $\overline{a}'$ be any $\preccurlyeq$-minimal element of $\overline{a}^E$. Since $\overline{a}^E$ is $\overline{a}'$-definable, it suffices to show that $\overline{a}'$ is definable over $\left\{\gamma(\overline{a}^E)\right\}\cup\sqrt{\mathbb{M}}$. Furthermore, by Lemma 2.10, it suffices for this to show that $\overline{a}'_{p^\infty}\in\dcl\left\{\gamma(\overline{a}^E)_{p^\infty},p\right\}$ for each $p\in\mathbb{P}$. Thus fix any $p\in\mathbb{P}$. By definition, $\gamma(\overline{a}^E)_{p^\infty}$ is $\preccurlyeq$-minimal among the $p$-parts of the elements of $\overline{a}^E$, so there exists some $\overline{a}''\in\overline{a}^E$ such that $\gamma(\overline{a}^E)_{p^\infty}=\overline{a}''_{p^\infty}$. Now $\overline{a}^E$ is $\overline{a}''$-definable; since $\overline{a}'$ is $\preccurlyeq$-minimal in $\overline{a}^E$, by Lemma 2.9 we thus have $\overline{a}'_{p^\infty}\in\dcl\left\{\overline{a}''_{p^\infty},p\right\}$, and the result follows.
\end{proof}

\begin{theorem}
$T$ has weak elimination of imaginaries.
\end{theorem}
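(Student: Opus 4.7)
The plan is to combine Lemma 4.8 with Corollary 4.7, using the fact that $\gamma(\overline{a}^E)$ is canonically attached to the class $\overline{a}^E$ (by its definition as a pointwise $\preccurlyeq$-minimum in Lemma 2.12). By Lemma 4.8 we may fix a formula $\phi(\overline{v};\overline{w},\overline{u})$ and a radical tuple $\overline{r}\in\sqrt{\mathbb{M}}{}^m$ with $\overline{a}^E=\phi(\mathbb{M}^n;\gamma(\overline{a}^E),\overline{r})$. I would then consider the set of all radical tuples that serve as such parameters,
$$S:=\left\{\overline{r}'\in\sqrt{\mathbb{M}}{}^m:\phi(\mathbb{M}^n;\gamma(\overline{a}^E),\overline{r}')=\overline{a}^E\right\},$$
which is nonempty (it contains $\overline{r}$) and $\mathbb{M}$-definable (via $\overline{a}$). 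By Corollary 4.7, $S$ has an almost canonical parameter $X\subset\mathbb{M}$.

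I claim that $\{\gamma(\overline{a}^E)\}\cup X$ is then an almost canonical parameter for $\overline{a}^E$. First, $X$-definability of $S$ together with nonemptiness yields $\{\gamma(\overline{a}^E)\}\cup X$-definability of $\overline{a}^E$ via the equivalence $\overline{v}\in\overline{a}^E\iff\exists\overline{u}\,[\overline{u}\in S\wedge\phi(\overline{v};\gamma(\overline{a}^E),\overline{u})]$, since every $\overline{r}'\in S$ witnesses $\overline{a}^E$ through $\phi$. Conversely, any automorphism $\alpha$ fixing $\overline{a}^E$ setwise fixes $\gamma(\overline{a}^E)$ pointwise, since $\gamma$ depends only on the class. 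A direct computation then shows $\alpha$ fixes $S$ setwise: for $\overline{r}'\in S$, one has $\phi(\mathbb{M}^n;\gamma(\overline{a}^E),\alpha(\overline{r}'))=\alpha(\phi(\mathbb{M}^n;\gamma(\overline{a}^E),\overline{r}'))=\alpha(\overline{a}^E)=\overline{a}^E$, so $\alpha(\overline{r}')\in S$. Since $X$ is almost canonical for $S$, it is fixed setwise by $\alpha$, and thus so is $\{\gamma(\overline{a}^E)\}\cup X$.

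I don't expect any real obstacle here: the hard work has already been done in Sections 2.4 and 4, namely the reduction (Lemma 4.8) of arbitrary definable classes to ones definable from $\gamma(\overline{a}^E)$ plus a radical tuple, and the existence of almost canonical parameters inside $\sqrt{\mathbb{M}}$ (Corollary 4.7, via the minimal-support and disjoint-radical-tuple arguments of Lemmas 4.5 and 4.6). The current theorem is then an assembly argument: the fiber-by-fiber almost canonical parameter $X$ over $\gamma(\overline{a}^E)$ combines with $\gamma(\overline{a}^E)$ itself to give the desired almost canonical parameter for $\overline{a}^E$.
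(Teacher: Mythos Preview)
Your proposal is correct and matches the paper's proof essentially line for line: the paper defines the same set $D=\{\overline{s}'\in\sqrt{\mathbb{M}}{}^m:\overline{a}^E=\phi(\mathbb{M}^n;\gamma(\overline{a}^E),\overline{s}')\}$, applies Corollary 4.7 to obtain an almost canonical parameter $X$, and verifies that $\{\gamma(\overline{a}^E)\}\cup X$ works by the same two observations you give. Your write-up is in fact slightly more explicit than the paper's about why $\overline{a}^E$ is definable from $\{\gamma(\overline{a}^E)\}\cup X$ and why automorphisms fixing $\overline{a}^E$ setwise fix $S$ setwise.
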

\begin{proof}
We need to show that $\overline{a}^E$ has an almost canonical parameter. By Lemma 4.8, let $\overline{s}\in\sqrt{\mathbb{M}}{}^m$ be any radical tuple such that $\overline{a}^E$ is definable over $\left\{\gamma(\overline{a}^E),\overline{s}\right\}$, and choose a formula $\phi(\overline{v};\overline{u},\overline{w})$ such that $\overline{a}^E=\phi(\mathbb{M}^n;\gamma(\overline{a}^E),\overline{s})$. By Corollary 4.7, the $\mathbb{M}$-definable set $$D:=\left\{\overline{s}'\in\sqrt{\mathbb{M}}{}^m:\overline{a}^E=\phi(\mathbb{M}^n;\gamma(\overline{a}^E),\overline{s}')\right\}\subseteq\sqrt{\mathbb{M}}{}^m$$ has an almost canonical parameter $X\subset\mathbb{M}$. $D$ is definable over $X$, so $\overline{a}^E$ is definable over the union $\left\{\gamma(\overline{a}^E)\right\}\cup X$. On the other hand, every automorphism of $\mathbb{M}$ that fixes $\overline{a}^E$ setwise fixes $\gamma(\overline{a}^E)$, hence also fixes $D$ setwise, and hence fixes $X$ setwise as well. So each such automorphism fixes $\left\{\gamma(\overline{a}^E)\right\}\cup X$ setwise, and this latter set is thus an almost canonical parameter for $\overline{a}^E$.
\end{proof}

\newpage
\appendix
\section{Proof of QE}
\begin{remark}
Following the notation of Jeřábek \citep{jerabek_2018}, we will occasionally find it convenient to write the formula $\phi^{\geqslant n}(\overline{v})$ as $|S(\overline{v})|\geqslant n$, where $S(\overline{v})$ is the `set', which depends on $\overline{v}$, of the form $\left\{p\in\mathbb{P}:\phi^p(\overline{v})\right\}$. In particular, if $(S_i(\overline{v}))_{i\in I}$ is a family of such `sets', then any Boolean combination of (finitely many) of the $S_i(\overline{v})$ will also be of that form.
\end{remark}We will prove the desired result in a series of lemmas. Use $\neg_0$ to denote the string `$\neg$' and $\neg_1$ to denote the empty string. Also, given a natural number $k\in\omega$, let $2^k$ denote the set of $k$-tuples whose entries are all $0$ or $1$. We will use $2^k_0$ to mean $2^k\setminus\{\overline{0}\}$.

\begin{lemma}
For any formulas $\phi_i(\overline{v})$ and $\psi_j(\overline{v})$ and any natural numbers $m_i,n_j\in\omega$, the formula $$\eta(\overline{v})\equiv\bigwedge_{i\in[k]}\phi_i^{\leqslant m_i}(\overline{v})\wedge\bigwedge_{j\in[l]}\psi_j^{\geqslant n_j}(\overline{v})$$ is equivalent to a disjunction of formulas of the form $$\bigwedge_{\alpha\in I}\theta_\alpha^{=m'_\alpha}(\overline{v})\wedge\bigwedge_{\beta\in J}\xi_\beta^{\geqslant n_\beta'}(\overline{v})$$ such that, at any prime $p$, the family of formulas $\left\{\theta_\alpha^p(\overline{v}),\xi_\beta^p(\overline{v}):\alpha\in I,\beta\in J\right\}$ is pairwise inconsistent.
\end{lemma}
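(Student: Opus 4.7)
The plan is to stratify primes by the truth values of the formulas $\phi_i^p, \psi_j^p$ at $p$, and then rewrite each cardinality constraint in $\eta$ as a sum-constraint on the cardinalities of the strata. For each sign pattern $\overline{\varepsilon}\in 2^{k+l}$, I would define
$$\chi_{\overline{\varepsilon}}(\overline{v})\equiv\bigwedge_{i\in[k]}\neg_{\varepsilon_i}\phi_i(\overline{v})\,\wedge\,\bigwedge_{j\in[l]}\neg_{\varepsilon_{k+j}}\psi_j(\overline{v}).$$
Since relativization distributes over Boolean operations, the family $\{\chi_{\overline{\varepsilon}}^p\}_{\overline{\varepsilon}}$ is pairwise inconsistent at every prime $p$: any two distinct patterns disagree on the required sign of some $\phi_i^p$ or $\psi_j^p$. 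Writing $S_{\overline{\varepsilon}}(\overline{v}):=\{p\in\mathbb{P}:\chi_{\overline{\varepsilon}}^p(\overline{v})\}$ in the notation of Remark A.1, the set $\{p:\phi_i^p(\overline{v})\}$ decomposes as the disjoint union $\bigsqcup_{\varepsilon_i=1}S_{\overline{\varepsilon}}(\overline{v})$, and similarly for $\psi_j$.

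Under these identifications $\eta(\overline{v})$ becomes the conjunction
$$\bigwedge_{i\in[k]}\sum_{\overline{\varepsilon}:\varepsilon_i=1}|S_{\overline{\varepsilon}}|\leqslant m_i\quad\wedge\quad\bigwedge_{j\in[l]}\sum_{\overline{\varepsilon}:\varepsilon_{k+j}=1}|S_{\overline{\varepsilon}}|\geqslant n_j.$$
Next I would split $2^{k+l}_0$ into $A:=\{\overline{\varepsilon}:\varepsilon_i=1\text{ for some }i\in[k]\}$, whose $|S_{\overline{\varepsilon}}|$ is automatically bounded by $M:=\max_i m_i$ under $\eta$, and $B:=2^{k+l}_0\setminus A$, where only lower bounds matter. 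Setting $N:=\max_j n_j$, enumerate all pairs $(c,t)$ with $c:A\to\{0,\dots,M\}$ and $t:B\to\{0,\dots,N\}$ satisfying
$$\sum_{A,\varepsilon_i=1}c(\overline{\varepsilon})\leqslant m_i\quad\text{and}\quad\sum_{A,\varepsilon_{k+j}=1}c(\overline{\varepsilon})+\sum_{B,\varepsilon_{k+j}=1}t(\overline{\varepsilon})\geqslant n_j,$$
and take the finite disjunction of the associated formulas
$$\bigwedge_{\overline{\varepsilon}\in A}\chi_{\overline{\varepsilon}}^{=c(\overline{\varepsilon})}(\overline{v})\,\wedge\,\bigwedge_{\overline{\varepsilon}\in B}\chi_{\overline{\varepsilon}}^{\geqslant t(\overline{\varepsilon})}(\overline{v}).$$
Each disjunct is of the required form, and the pairwise-inconsistency condition holds by the first paragraph.

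To verify equivalence I would argue both directions. The forward direction is direct: the constraints on $(c,t)$ immediately give the sum bounds and thus $\eta$. For the converse, given $\overline{v}\models\eta$, choose $c(\overline{\varepsilon})=|S_{\overline{\varepsilon}}(\overline{v})|$ for $\overline{\varepsilon}\in A$ (finite by $\eta$) and $t(\overline{\varepsilon})=\min(|S_{\overline{\varepsilon}}(\overline{v})|,N)$ for $\overline{\varepsilon}\in B$; the single nontrivial point is that capping at $N\geqslant n_j$ preserves each lower bound, which follows from the elementary inequality $\sum_i\min(a_i,N)\geqslant\min(\sum_i a_i,N)$. The main obstacle is really only the bookkeeping of sign patterns and ensuring the disjunction remains finite — once the stratification $\{\chi_{\overline{\varepsilon}}\}$ is in place, everything else is a routine case analysis.
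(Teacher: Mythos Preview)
Your proof is correct and follows essentially the same approach as the paper: both stratify primes by the Boolean pattern of truth values of the $\phi_i^p$ and $\psi_j^p$, obtaining pairwise inconsistent formulas $\chi_{\overline{\varepsilon}}$ (the paper splits these into $\theta_{\overline{\varepsilon\varsigma}}$ and $\xi_{\overline{\varsigma}}$), and then enumerate the finitely many compatible assignments of exact counts on the bounded strata and lower-bound thresholds on the unbounded ones. The only cosmetic difference is that the paper first expands each $\phi_i^{\leqslant m_i}$ as a disjunction $\bigvee_{m'_i\leqslant m_i}\phi_i^{=m'_i}$ before stratifying, whereas you fold this directly into the constraint $\sum c(\overline{\varepsilon})\leqslant m_i$; your capping argument for $t$ also makes the converse direction a touch more explicit than in the paper.
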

\begin{proof}
First note that $\eta(\overline{v})$ is equivalent to the disjunction $$\bigvee_{m'_1\leqslant m_1}\dots\bigvee_{m'_k\leqslant m_k}\left[\bigwedge_{i\in[k]}\phi_i^{=m'_i}(\overline{v})\wedge\bigwedge_{j\in[l]}\psi_j^{\geqslant n_j}(\overline{v})\right],$$ and thus we may assume that $\eta(\overline{v})$ is of form $\bigwedge_{i\in[k]}\phi_i^{=m_i}(\overline{v})\wedge\bigwedge_{j\in[l]}\psi_j^{\geqslant n_j}(\overline{v})$. Now, for any elements $\overline{\varepsilon}\in 2^k_0$ and $\overline{\varsigma}\in 2^l$, define two kinds of formula:
\begin{enumerate}
    \item Let $\theta_{\overline{\varepsilon}\overline{\varsigma}}(\overline{v})\equiv\bigwedge_{i\in[k]}\neg_{\varepsilon_i}\phi_i(\overline{v})\wedge\bigwedge_{j\in[l]}\neg_{\varsigma_j}\psi_j(\overline{v})$.
    \item Let $\xi_{\overline{\varsigma}}(\overline{v})\equiv\bigwedge_{i\in[k]}\neg\phi_i(\overline{v})\wedge\bigwedge_{j\in[l]}\neg_{\varsigma_j}\psi_j(\overline{v})$.
\end{enumerate}
Certainly the family $\left\{\theta_{\overline{\varepsilon\varsigma}}^p(\overline{v}):\overline{\varepsilon}\in 2^k_0,\overline{\varsigma}\in 2^l\right\}\cup\left\{\xi_{\overline{\varsigma}}^p(\overline{v}):\overline{\varsigma}\in 2^l_0\right\}$ is pairwise inconsistent for any prime $p\in\mathbb{P}$. With this in mind, let $F$ be the set of functions $f:2^{k+l}_0\to\omega$ with the following two properties:
\begin{enumerate}
    \item For each $i\in[k]$, we have $m_i=\sum_{\overline{\varepsilon\varsigma}\in 2^{k+l}_0}\varepsilon_if(\overline{\varepsilon\varsigma})$.
    \item For each $j\in[l]$, we have $n_j=\sum_{\overline{\varepsilon\varsigma}\in 2^{k+l}_0}\varsigma_jf(\overline{\varepsilon\varsigma})$.
\end{enumerate} Note that $F$ is finite, since the image of each $f\in F$ is bounded on each element of $2^{k+l}_0$. Indeed, if $\overline{\varepsilon\varsigma}\in 2_0^{k+l}$, there is either some $i\in[k]$ such that $\varepsilon_i=1$ or some $j\in[l]$ such that $\varsigma_j=1$. In the first case we have $f(\overline{\varepsilon\varsigma})\leqslant m_i$, and in the second case we have $f(\overline{\varepsilon\varsigma})\leqslant n_j$. In particular, the formula $$\bigvee_{f\in F}\left[\bigwedge_{\overline{\varepsilon}\in 2^k_0,{\overline{\varsigma}}\in 2^{l}}\theta_{\overline{\varepsilon\varsigma}}^{=f(\overline{\varepsilon\varsigma})}(\overline{v})\wedge\bigwedge_{\overline{\varsigma}\in 2^l_0}\xi_{\overline{\varsigma}}^{\geqslant f(\overline{0\varsigma})}(\overline{v})\right]$$ is well-defined and equivalent to $\eta(\overline{v})$.
\end{proof}

\begin{corollary}
For any formulas $\phi_i(\overline{v},w)$ and $\psi_j(\overline{v},w)$ and any $m_i,n_j\in\omega$, the formula $$\eta(\overline{v})\equiv\exists w\left[\bigwedge_{i\in[k]}\phi_i^{\leqslant m_i}(\overline{v},w)\wedge\bigwedge_{j\in[l]}\psi_j^{\geqslant n_j}(\overline{v},w)\right]$$ is equivalent to a Boolean combination of formulas of the form $\theta^{\geqslant m}(\overline{v})$.
\end{corollary}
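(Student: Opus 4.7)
The plan is to apply Lemma A.2 to the body of the existential in $\eta(\overline v)$, viewing $(\overline v,w)$ as a single variable tuple. This expresses the body as a disjunction of formulas
$$\bigwedge_{\alpha\in I}\theta_\alpha^{=m'_\alpha}(\overline v,w)\wedge\bigwedge_{\beta\in J}\xi_\beta^{\geqslant n'_\beta}(\overline v,w),$$
with the property that at every prime $p$ the $p$-relativizations $\{\theta_\alpha^p,\xi_\beta^p\}$ are pairwise inconsistent. Since $\exists w$ distributes over disjunction, I would handle one such disjunct at a time.

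Fix a disjunct. By Remark 2.1 each $\theta_\alpha^p$ and $\xi_\beta^p$ depends only on $\overline v_{p^\infty}$ and $w_{p^\infty}$, so the choice of $w$ decomposes into independent choices of $w_{p^\infty}\in p^\infty$ at each prime. By the pairwise inconsistency hypothesis, any choice of $w_{p^\infty}$ satisfies at most one formula in $\{\theta_\alpha^p,\xi_\beta^p\}$, so the local choice at $p$ amounts to ``hitting'' a single index $\gamma\in I\sqcup J$ or hitting nothing. For each pair $(D,e)$ with $D\subseteq I\sqcup J$ and $e\in\{0,1\}$, the condition that ``at prime $p$ the indices achievable by some choice of $w_{p^\infty}$ are exactly those in $D$, and the option of hitting nothing is available iff $e=1$'' is a Boolean combination of formulas of the form $\exists x\,\theta_\alpha^p(\overline v,x)$, $\exists x\,\xi_\beta^p(\overline v,x)$, and $\exists x\bigwedge_\gamma\neg\gamma^p(\overline v,x)$, each of which is the $p$-relativization of a $\overline v$-formula (by Remark 2.1). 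So there is a formula $\chi_{D,e}(\overline v)$ with $\chi_{D,e}^p(\overline v)$ expressing exactly this profile, and as $(D,e)$ varies the $\chi_{D,e}^p(\overline v)$ partition the primes for each fixed $\overline v$.

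Given $\overline v$, existence of a suitable $w$ is then equivalent to feasibility of a finite integer assignment problem: at each prime with profile $(D,e)$ we assign either some $\gamma\in D$, or, if $e=1$, nothing, in such a way that each $\alpha\in I$ is hit exactly $m'_\alpha$ times and each $\beta\in J$ at least $n'_\beta$ times. Because all $m'_\alpha,n'_\beta$ are finite, feasibility of this problem depends only on the truncated profile counts $\min\bigl(|\{p:\chi_{D,e}^p(\overline v)\}|,M\bigr)$ for the uniform bound $M=\sum_\alpha m'_\alpha+\sum_\beta n'_\beta+1$. These truncated counts are Boolean combinations of the formulas $\chi_{D,e}^{\geqslant m}(\overline v)$ with $m\leqslant M$, yielding the desired Boolean combination of formulas $\zeta^{\geqslant m}(\overline v)$.

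The main obstacle is the feasibility analysis. I would need to verify, first, that the pointwise ``local choice'' picture correctly captures existence of a single global $w\in\mathbb M$ --- this uses saturation of $\mathbb M$ to realise the evidently finitely satisfiable partial type on $w$ that prescribes $w_{p^\infty}$ prime-by-prime according to a chosen feasible assignment --- and second, that feasibility of the combinatorial assignment problem really does depend only on truncated counts, so that the resulting condition is first-order expressible in the required form.
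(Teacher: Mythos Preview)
Your reduction via Lemma A.2 and the profile partition is sound, but the claimed equivalence ``existence of a suitable $w$ $\Leftrightarrow$ feasibility of the assignment problem'' fails in the direction you try to justify by saturation. The partial type that prescribes $w_{p^\infty}$ at every prime $p$ is a type over $\{\overline v\}\cup\mathbb P$, and $\mathbb P$ is not small in $\mathbb M$; saturation does not apply, and such types can be finitely satisfiable yet unrealised. Concretely, take $I=\{\alpha\}$, $J=\varnothing$, $m'_\alpha=0$, and $\theta_\alpha(v,w)\equiv(v=1)\wedge(w=1)$. At each prime $p\notin\sigma(v)$ the profile is $(\{\alpha\},e=1)$, since $w_p=1$ hits $\alpha$ while $w_p=p$ hits nothing; so ``assign $\perp$ everywhere'' is feasible in your sense. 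But $\exists w\,\theta_\alpha^{=0}(v,w)$ asserts that every prime lies in $\sigma(v)\cup\sigma(w)$, which is false in $\mathbb N$ and hence in $\mathbb M$. Your feasibility test says ``yes'' where the answer is ``no''.

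The missing information is precisely what the paper encodes by the sets $V_i=\{p:\phi_i^p(\overline 1,1)\}$ and the condition $|V_i|=0$: any global $w$ satisfies $w_{p^\infty}=1$ at cofinitely many primes, so one must record which $\gamma$ is hit by the choice $w_p=1$, not merely whether \emph{some} $w_p$ hits nothing. Your profile $(D,e)$ omits this. A repair is to enrich each profile with the index $\gamma^\ast\in I\sqcup J\sqcup\{\perp\}$ hit by $w_p=1$ and to require that the assignment agree with $\gamma^\ast$ at all but finitely many primes; once you do this, the truncated-count argument can be made to work, and you will have essentially rederived the paper's conditions $S_i,T_j,U,V_i$ by hand rather than quoting Fact~B.2.
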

\begin{proof}
By Lemma A.2, and since existential quantification distributes over disjunction, we may assume that $\eta(\overline{v})\equiv\exists w\left[\bigwedge_{i\in[k]}\phi_i^{=m_i}(\overline{v},w)\wedge\bigwedge_{j\in[l]}\psi_j^{\geqslant n_j}(\overline{v},w)\right]$ and that, for any $p\in\mathbb{P}$, the family $\left\{\phi_i^p(\overline{v},w),\psi_j^p(\overline{v},w):i\in[k],j\in[l]\right\}$ is pairwise inconsistent. (For convenience, call this latter fact $\ast$.) Consider the following sets of primes:
\begin{enumerate}
    \item $S_i(\overline{v}):=\big\{p\in\mathbb{P}:\exists w\phi_i^p(\overline{v},w)\big\}$, for each $i\in[k]$.
    \item $T_j(\overline{v}):=\big\{p\in\mathbb{P}:\exists w\psi_j^p(\overline{v},w)\big\}$, for each $j\in[l]$.
    \item $U(\overline{v}):=\left\{p\in\mathbb{P}:\forall w\bigvee_{i=1}^k\phi_i^p(\overline{v},w)\right\}$.
    \item $V_i:=\left\{p\in\mathbb{P}:\phi_i^p(\overline{1},1)\right\}$, for each $i\in[k]$.
\end{enumerate} We claim that, for any tuple $\overline{a}\in\mathbb{M}^{|\overline{a}|}$, the sentence $\eta(\overline{a})$ is equivalent to the following conditions on these sets:
\begin{enumerate}
    \item For each $i\in[k]$ and $j\in[l]$, there are subsets $P_i\subseteq S_i(\overline{a})$ and $Q_j\subseteq T_j(\overline{a})$ satisfying
    \begin{enumerate}
        \item $|P_i|=m_i$ and $|Q_j|=n_j$ for each $i\in[k]$, $j\in[l]$.
        \item $P_{i_1}\cap P_{i_2}=P_{i_1}\cap Q_{j_1}=Q_{j_1}\cap Q_{j_2}=\varnothing$ for any $i_1\neq i_2 \in[k]$, $j_1\neq j_2\in [l]$.
        \item $U(\overline{a})\subseteq\bigcup_{i\in[k]}P_i$.
    \end{enumerate}
    \item $|V_i|=0$ for each $i\in[k]$.
\end{enumerate} By Remark A.1 and Fact B.2, condition ($1$) is equivalent to a Boolean combination of formulas of the desired form. Clearly condition ($2$) is also of the desired form, and so this will prove the result.

So, we need only show the equivalence of these conditions. Note that we may assume $\overline{a}\in\mathbb{N}^{|\overline{a}|}$ and that all the sets above quantify over $\mathbb{P}\cap\mathbb{N}$ rather than $\mathbb{P}$. First we show the forward direction. Suppose $\mathbb{N}\models\eta(\overline{a})$ and that $c\in\mathbb{N}$ is a witness. We will show condition ($1$) first. For each $i\in[k]$, let $P_i=\{p\in\mathbb{P}\cap\mathbb{N}:\phi_i^p(\overline{a},c)\}$. Note that $P_i\subseteq S_i(\overline{a})$, with $c$ as a witness of the defining formula for $S_i(\overline{v})$, and that $|P_i|=m_i$ by definition of $\eta$. Similarly, by definition of $\eta$, for each $j\in[l]$ we may find a subset $Q_j\subseteq\{q\in\mathbb{P}\cap\mathbb{N}:\psi_j^q(\overline{a},c)\}$ with $|Q_j|=n_j$. Again, $Q_j\subseteq T_j(\overline{a})$, with $c$ as a witness. Thus the $P_i$ and $Q_j$ satisfy condition ($1$a). That they also satisfy condition ($1$b) is immediate from fact $\ast$. To see ($1$c), suppose $p\in U(\overline{a})$; then, instantiating $w$ at $c$, we have $\mathbb{N}\models\bigvee_{i\in[k]}\phi_i^p(\overline{a},c)$. By definition of the $P_i$, this means $p$ lies in $P_i$ for some $i\in[k]$, as desired. So, all of the properties needed for condition ($1$) are satisfied. To see that condition ($2$) holds, fix $i\in[k]$, and let $q\in\mathbb{P}\cap\mathbb{N}$ be any prime outside $P_i\cup\sigma(c)\cup\sigma(\overline{a})$. Since $q\notin P_i$, by definition we have $\mathbb{N}\models\neg\phi_i^q(\overline{a},c)$. By Remark 2.1, this means in particular that $\mathbb{N}\models\neg\phi_i^q(\overline{a}_{q^\infty},c_{q^\infty})$. But $\overline{a}_{q^\infty}=\overline{1}$ and $c_{q^\infty}=1$ by construction of $q$, so we have $\mathbb{N}\models\neg\phi_i^q(\overline{1},1)$. Now for any $p\in\mathbb{P}\cap\mathbb{N}$ there is an automorphism of $\mathbb{N}$ taking $q$ to $p$, and so we also have $\mathbb{N}\models\neg\phi_i^p(\overline{1},1)$. In particular, $|V_i|=0$.

Now we show the backward direction. Suppose that conditions ($1$) and ($2$) hold, witnessed by the sets $P_i$ and $Q_j$. We will create a witness $c$ for $\eta(\overline{a})$ by stitching together its $p$-parts in the following steps:
\begin{enumerate}
    \item Fix $i\in[k]$. For any $p\in P_i$, since $p\in S_i(\overline{a})$, there exists $e_p\in\mathbb{N}$ such that $\mathbb{N}\models\phi_i^p(\overline{a},e_p)$. By Remark 2.1, we may replace $e_p$ with $e_{p,p^\infty}$ and thus assume that is it a power of $p$. Then let $c_{p^\infty}=e_p$.
    \item Similarly, for every $j\in[l]$ and $q\in Q_j$, there is $f_q$ a power of $q$ such that $\mathbb{N}\models\psi_j^q(\overline{a},f_q)$. Let $c_{q^\infty}=f_q$.
    \item Now suppose $p\in\sigma(\overline{a})$. If $p\in U(\overline{a})$, then, by condition ($1$c), $p\in P_i$ for some $i\in[k]$, and so has been handled in step $1$. If $p\notin U(\overline{a})$, then by the defining formula of $U(\overline{v})$ there must exist some $g_p\in\mathbb{N}$ such that $\mathbb{N}\models\bigwedge_{i=1}^k\neg\phi_i^p(\overline{a},g_p)$. Again by Remark 2.1, we may assume that $g_p$ is a power of $p$, and then let $c_{p^\infty}=g_p$.
    \item Now suppose $p\notin\sigma(\overline{a})$ and that $p\notin\bigcup_{i\in[k]}P_i\cup\bigcup_{j\in[l]} Q_j$. Then let $c_{p^\infty}=1$. Note that, by condition ($2$), we have $\mathbb{N}\models\neg\phi_i^p(\overline{1},1)$ for any $i\in[k]$. In particular, since $\overline{a}_{p^\infty}=\overline{1}$, by Remark 2.1 we will have $\mathbb{N}\models\neg\phi_i^p(\overline{a},c)$.
\end{enumerate} This is well defined for two reasons. First, all but finitely many of the $p$-parts of $c$ will be $1$: the primes at which the $p$-part of $c$ are not $1$ lie either in $\sigma(\overline{a})$, which is finite, or in $\bigcup_{i\in[k]} P_i\cup\bigcup_{j\in[l]} Q_j$, which is finite by condition ($1$a). Second, the $P_i$ and the $Q_j$ are all pairwise disjoint by condition ($1$b), and so the primes specified at each stage in the above steps are all distinct. We claim now that $$\mathbb{N}\models\bigwedge_{i\in[k]}\phi_i^{=m_i}(\overline{a},c)\wedge\bigwedge_{j\in[l]}\psi_j^{\geqslant n_j}(\overline{a},c),$$ which will complete the proof. We prove this in two parts.
\begin{enumerate}
    \item Let $j\in[l]$. We claim that $Q_j\subseteq\{q\in\mathbb{P}\cap\mathbb{N}:\psi_j^q(\overline{a},c)\}$. Since $|Q_j|=n_j$ by condition ($1$a), this will show $\mathbb{N}\models\psi_j^{\geqslant n_j}(\overline{a},c)$, as desired. To see this, suppose $q\in Q_j$. Then by step $2$ we have $\mathbb{N}\models\psi_j^q(\overline{a},c_{q^\infty})$ and hence $\mathbb{N}\models\psi_j^q(\overline{a},c)$, as desired.
    \item Let $i\in[k]$. We claim that $P_i=\{p\in\mathbb{P}\cap\mathbb{N}:\phi_i^p(\overline{a},c)\}$. Since $|P_i|=m_i$ by condition ($1$a), this will show $\mathbb{N}\models\phi_i^{=m_i}(\overline{a},c)$, as desired. The inclusion $\subseteq$ follows from an identical argument as in the above item. So, we show $\supseteq$. Suppose therefore that some $p\in\mathbb{P}\cap\mathbb{N}$ does not lie in $P_i$. We have four cases; if $p\in P_{i'}$ for some $i'\neq i\in[k]$, then by the inclusion $\subseteq$ we have $\mathbb{N}\models\phi_{i'}^p(\overline{a},c)$. Since $\phi_{i'}^p(\overline{v},w)$ and $\phi_i^p(\overline{v},w)$ are inconsistent by $\ast$, this means $\mathbb{N}\models\neg\phi_i^p(\overline{a},c)$, as desired. The case when $p\in Q_j$ for some $j\in[l]$ follows similarly, since $\psi_j^p(\overline{v},w)$ and $\phi_i^p(\overline{v},w)$ are again inconsistent by $\ast$. If $p$ does not lie in any of the $P_{i'}$ or $Q_j$, but it is a prime factor of some element of $\overline{a}$, then by step $3$ we have $c_{p^\infty}=g_p$ and hence $\mathbb{N}\models\neg\phi_i^p(\overline{a},c_{p^\infty})$, as desired. Finally, if $p$ does not lie in any of the $P_{i'}$ or $Q_j$, and does not divide any element of $\overline{a}$, then by the remark in step $4$ we have $\mathbb{N}\models\neg\phi_i^p(\overline{a},c)$.
\end{enumerate} This concludes the proof.
\end{proof}

\begin{theorem}
Any formula $\phi(\overline{v})$ is equivalent to a Boolean combination of formulas of the form $\theta^{\geqslant m}(\overline{v})$.
\end{theorem}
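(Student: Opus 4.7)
The plan is to prove Theorem A.4 by induction on the complexity of $\phi(\overline{v})$, treating atomic formulas, Boolean operations, and existential quantification in turn. Corollary A.3 will bear almost all the weight; what remains is to set up the reduction cleanly.

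For the atomic case, a formula is of the form $t_1(\overline{v}) = t_2(\overline{v})$ with $t_1, t_2$ monomials in $\overline{v}$. The key observation is that equality in $\mathbb{M}$ is determined prime-by-prime: the sentence $\forall \overline{v}[t_1(\overline{v}) = t_2(\overline{v}) \leftrightarrow \forall u(\pi(u) \to t_1(\overline{v}_{u^\infty}) = t_2(\overline{v}_{u^\infty}))]$ is a theorem of $\mathbb{N}$, hence of $T$. Setting $\theta(\overline{v}) \equiv t_1(\overline{v}) \neq t_2(\overline{v})$, this shows $t_1 = t_2$ is equivalent to $\neg \theta^{\geqslant 1}(\overline{v})$, a Boolean combination of the required form.

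Boolean operations are handled trivially: a Boolean combination of Boolean combinations of $\theta^{\geqslant m}$-formulas is again such a combination. The existential case is the substantive one, and is where Corollary A.3 is designed to be applied. Given $\phi(\overline{v}) \equiv \exists w \, \psi(\overline{v}, w)$ with $\psi$ a Boolean combination of formulas $\theta^{\geqslant m}(\overline{v}, w)$ by the induction hypothesis, I would put $\psi$ in disjunctive normal form. Since $\exists$ distributes over $\vee$, this reduces the problem to showing that each disjunct, namely a conjunction of formulas $\theta_i^{\geqslant m_i}(\overline{v}, w)$ together with negated instances $\psi_j^{<n_j}(\overline{v}, w) \equiv \psi_j^{\leqslant n_j - 1}(\overline{v}, w)$, becomes under an existential quantifier a Boolean combination of $\theta^{\geqslant m}(\overline{v})$-formulas. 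This is exactly the statement of Corollary A.3.

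The main obstacle — swapping the existential quantifier past the counting quantifiers, with all the combinatorial accounting of prime witnesses this involves — has already been overcome in Lemma A.2 and Corollary A.3. The descent from these to the full theorem is therefore a routine induction on formula complexity: atomic reduction to $\neg\theta^{\geqslant 1}$, trivial Boolean closure, and DNF-normalization to invoke Corollary A.3 in the existential step.
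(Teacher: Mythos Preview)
Your proposal is correct and follows essentially the same route as the paper: induction on formula complexity, with the atomic case handled by the prime-by-prime criterion for equality (your $\neg\theta^{\geqslant 1}$ is exactly the paper's $(\neg\phi)^{\leqslant 0}$), Boolean closure being trivial, and the existential step reduced via DNF to a direct invocation of Corollary A.3. The only cosmetic difference is that you spell out the DNF normalization explicitly, whereas the paper compresses this into ``it suffices to show that, if $\phi(\overline{v},w)$ is a conjunction\dots''.
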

\begin{proof}
Proof by induction on the complexity of $\phi(\overline{v})$. First suppose that $\phi$ is atomic; then it is of the form $v_1^{k_1}\dots v_n^{k_n}=v_1^{l_1}\dots v_n^{l_n}$ for some $k_i,l_i\in\omega$. Since two elements of $\mathbb{N}$ are equal if and only if their $p$-parts are equal for all $p\in\mathbb{P}$, and the $p$-part operation commutes with products, the formula $\phi(\overline{v})$ is equivalent to $(\neg\phi)^{\leqslant 0}(\overline{v})$, as desired. Now it suffices to show that, if $\phi(\overline{v},w)$ is a conjunction of formulas of the form $\theta^{\geqslant m}(\overline{v},w)$ and $\theta^{\leqslant m}(\overline{v},w)$, then so is $\exists w\phi(\overline{v},w)$. This follows from Corollary A.3.
\end{proof}

\section{A combinatorial fact}
Thank you to Alex Kruckman for his observation that Fact B.2 follows from the following theorem, and to Jamshid Derakhshan and Angus Macintyre for their elegant exposition of its proof in Example 1 of the paper \citep{derakhshan_macintyre_2017}.

\begin{theorem}
The theory of infinite atomic Boolean algebras has quantifier elimination after expanding its language to include the predicate `$v$ has $n$ atoms lying beneath it' for every $n\in\omega$.$\hfill\blacksquare$
\end{theorem}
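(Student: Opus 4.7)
The plan is to apply the standard substructure criterion for quantifier elimination: if for any two models $\mathcal{A}, \mathcal{B}$ of the theory (with $\mathcal{B}$ sufficiently saturated), any embedding $f: C \hookrightarrow \mathcal{B}$ of a finitely generated substructure $C \subseteq \mathcal{A}$, and any $a \in \mathcal{A}$, the embedding $f$ extends to the substructure generated by $C \cup \{a\}$, then QE holds. I would then iterate this one-element extension to conclude.

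First I would note that a finitely generated substructure $C$ in the expanded language is just a finite Boolean subalgebra, determined up to isomorphism by its atoms $e_1, \ldots, e_k$ together with the atom-count $n_i \in \omega \cup \{\infty\}$ of each $e_i$ in the ambient algebra. Adjoining $a$ refines this decomposition by splitting each $e_i$ into $u_i := a \wedge e_i$ and $e_i \setminus u_i$, whose atom-counts $m_i$ and $m'_i$ in $\mathcal{A}$ must satisfy $m_i + m'_i = n_i$ (with the usual conventions for $\infty$). The task thus reduces to finding, for each $i$, an element $b_i \leq f(e_i)$ in $\mathcal{B}$ with atom-count $m_i$ such that $f(e_i) \setminus b_i$ has atom-count $m'_i$; then $b := \bigvee_i b_i$ will realize the correct quantifier-free type over $f(C)$, and we send $a \mapsto b$.

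The existence of such $b_i$ is where the content lies. When $n_i$ is finite, we simply take the join of any $m_i$ of the $n_i$ atoms of $\mathcal{B}$ lying below $f(e_i)$. When $n_i = \infty$, the three subcases---$m_i$ finite with $m'_i = \infty$, the symmetric case, and both infinite---are all realizable because $f(e_i)$ has infinitely many atoms in $\mathcal{B}$ by preservation of the atom-count predicates; a suitable $b_i$ is obtained either by taking a finite join of atoms below $f(e_i)$ or by partitioning the infinite set of such atoms into two infinite subsets. Saturation of $\mathcal{B}$ ensures these partitions are available.

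The main obstacle is not any single deep step but rather verifying that matching atom-counts across the refinement of $C$ by $a$ is both necessary and sufficient to pin down the quantifier-free type of $a$ over $C$ in the expanded language. This follows because every atomic formula over $C$ factors through the partition given by the atoms of $C$: the Boolean operations respect the refinement, and the atom-count predicates are additive over finite disjoint joins, so they are determined on arbitrary elements of $C\langle a\rangle$ by their values on the refined atoms $u_i$ and $e_i \setminus u_i$. Once this reduction is in hand, the case analysis above provides the desired extension and QE follows.
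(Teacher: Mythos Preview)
The paper does not prove this theorem: it is quoted as a known result, with the proof attributed to an exposition in Derakhshan--Macintyre (and ultimately to Tarski), and closed immediately with $\blacksquare$. Your proposal therefore supplies what the paper deliberately omits, so there is no ``paper's own proof'' to compare against.

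That said, your argument via the embedding-extension criterion is correct and is essentially the standard route. The quantifier-free type of $a$ over the finite subalgebra $C$ in the expanded language is indeed determined by the atom-counts of the refined atoms $a\wedge e_i$ and $e_i\setminus a$, since every element of $C\langle a\rangle$ is a disjoint join of some of these and atom-counts are additive over disjoint joins. Your case analysis then correctly produces a matching $b$ on the saturated side. The one point worth flagging explicitly is that the ``both infinite'' split genuinely requires saturation of $\mathcal{B}$: in the finite/cofinite algebra on $\omega$, for instance, no cofinite element can be partitioned into two pieces each with infinitely many atoms below it. You do invoke saturation here, so the argument goes through; just be sure in a full write-up to spell out the finitely satisfiable type whose realization gives the desired $b_i$.
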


\begin{fact}
Suppose we have natural numbers $m_i,n_j\in\omega$ and sets $S_i,T_j$, for each $i\in[k]$ and $j\in[l]$, and another set $U$. Then the existence of subsets $P_i\subseteq S_i$ and $Q_j\subseteq T_j$ such that
\begin{enumerate}
    \item $|P_i|=m_i$ and $|Q_j|=n_j$ for each $i\in[k]$, $j\in[l]$.
    \item $P_{i_1}\cap P_{i_2}=P_{i_1}\cap Q_{j_1}=Q_{j_1}\cap Q_{j_2}=\varnothing$ for any $i_1\neq i_2\in[k]$, $j_1\neq j_2\in[l]$.
    \item $U\subseteq\bigcup_{i=1}^k P_i$.
\end{enumerate} is expressible as a Boolean combination of formulas of the form $|X|\geqslant m$, where $m\in\omega$ and $X$ is a Boolean combination of the $S_i$, the $T_j$, and $U$.$\hfill\blacksquare$
\end{fact}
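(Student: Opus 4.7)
The plan is to interpret the existential statement inside a sufficiently large power-set Boolean algebra and apply Theorem B.1 directly. Fix an ambient set $\Omega$ containing all elements of $S_i, T_j, U$, enlarging it to an infinite set if necessary. Then $\mathcal{P}(\Omega)$ is an infinite atomic Boolean algebra whose atoms are the singletons of $\Omega$, so ``$X$ has (at least) $m$ atoms beneath it'' in the sense of Theorem B.1 coincides exactly with the cardinality condition $|X| \geqslant m$. Enlarging $\Omega$ does not change the truth of the existence statement, since any $P_i \subseteq S_i$ and $Q_j \subseteq T_j$ are automatically contained in $\bigcup_i S_i \cup \bigcup_j T_j$, hence in the original ambient set.

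Next, observe that the existence statement of Fact B.2 is a first-order formula $\phi(S_1,\dots,S_k,T_1,\dots,T_l,U)$ in the expanded language of Theorem B.1, with the $S_i, T_j, U$ as its free variables. Indeed, the subset conditions $P_i \subseteq S_i$ and $Q_j \subseteq T_j$, the pairwise disjointness of the $P_i, Q_j$, and the covering condition $U \subseteq \bigcup_i P_i$ are quantifier-free Boolean-algebraic formulas; the size conditions $|P_i| = m_i$ and $|Q_j| = n_j$ are conjunctions of the new ``has exactly $n$ atoms below'' predicates; and the $P_i, Q_j$ are then existentially quantified over $\mathcal{P}(\Omega)$.

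Applying Theorem B.1, $\phi$ is equivalent modulo the theory of infinite atomic Boolean algebras to a quantifier-free formula in the parameters, i.e., to a Boolean combination of equalities between Boolean terms in the $S_i, T_j, U$ and of ``has exactly $n$ atoms below'' predicates applied to such terms. In any atomic Boolean algebra, an equality $X = Y$ is equivalent to $\neg(|X \triangle Y| \geqslant 1)$, and ``$X$ has exactly $n$ atoms below it'' is equivalent to $|X| \geqslant n \wedge \neg(|X| \geqslant n+1)$. Substituting these gives the desired Boolean combination of formulas $|X| \geqslant m$, with $X$ a Boolean combination of the $S_i, T_j, U$. The only step requiring real care is the initial reduction to an infinite ambient Boolean algebra, which is essential for invoking Theorem B.1 but poses no genuine obstacle; beyond that, the argument is a direct translation between the combinatorial and Boolean-algebraic formulations.
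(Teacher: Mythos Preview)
Your proposal is correct and matches the paper's intended approach exactly: the paper states Fact B.2 without proof, noting only (in the section preamble) that it follows from Theorem B.1 on quantifier elimination for infinite atomic Boolean algebras, and your argument supplies precisely those details. The only point worth emphasizing is that the quantifier-free equivalent produced by Theorem B.1 is uniform across all models of the theory, so the same Boolean combination works for every choice of $S_i$, $T_j$, $U$; you implicitly use this but could make it explicit.
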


\section{Definable closures of stably embedded sets}

Let $T$ be any complete first-order theory, and let $\mathbb{M}\models T$ be a monster model in the sense of Section 0. Given a tuple $\overline{w}$ and a formula $\psi(v)$ in a single variable, use $\psi(\overline{w})$ to abbreviate the formula $\bigwedge_{i\in[|\overline{w}|]}\psi(w_i)$.

\begin{lemma}
Suppose $D\subseteq\mathbb{M}^m$ and $E\subseteq\mathbb{M}^n$ are $\varnothing$-definable, and that $D\subseteq \dcl E$. Then there is a $\varnothing$-definable surjection from a $\varnothing$-definable set of tuples of $E$ onto $D$.
\end{lemma}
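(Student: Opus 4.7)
The plan is to apply compactness to the assumption $D\subseteq\dcl E$ so as to reduce to finitely many defining formulas, and then to assemble these into a single $\varnothing$-definable surjection by padding parameter tuples to a common length and making a definable choice.

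First I would extract a finite list of uniform defining formulas. Consider the partial type in an $m$-tuple $\overline{v}$ consisting of the assertion $\overline{v}\in D$ together with, for each formula $\phi(\overline{v};\overline{w})$ and each $k\in\omega$, the clause
$$\neg\exists\overline{w}\left[\bigwedge_{i\in[k]}E(w_i)\wedge\forall\overline{v}'\left(\phi(\overline{v}';\overline{w})\leftrightarrow\overline{v}'=\overline{v}\right)\right].$$
The hypothesis $D\subseteq\dcl E$ says precisely that no $\overline{d}\in D$ realizes this type, so by saturation of $\mathbb{M}$ finitely many of the clauses already suffice. This would yield formulas $\phi_1(\overline{v};\overline{w}_1),\ldots,\phi_N(\overline{v};\overline{w}_N)$ with arities $|\overline{w}_i|=k_i$, such that every $\overline{d}\in D$ is the unique realization of $\phi_i(\overline{v};\overline{e})$ for some $i\in[N]$ and some $\overline{e}\in E^{k_i}$.

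Next I would uniformize the arities. Set $k=\max_i k_i$ and assume $E\neq\varnothing$, the empty case being degenerate. For each $i\in[N]$ let
$$\psi_i(\overline{w})\equiv\bigwedge_{j\in[k]}E(w_j)\wedge\exists!\overline{v}\left[D(\overline{v})\wedge\phi_i(\overline{v};w_1,\ldots,w_{k_i})\right],$$
and let $E'=\bigcup_{i\in[N]}\psi_i(\mathbb{M}^k)\subseteq E^k$. Define the graph of a map $f:E'\to D$ by picking, on input $\overline{e}\in E'$, the least $i$ such that $\psi_i(\overline{e})$ holds and outputting the unique realization of $\phi_i(\overline{v};e_1,\ldots,e_{k_i})$; concretely, take the graph to consist of pairs $(\overline{e},\overline{d})$ satisfying
$$\bigvee_{i\in[N]}\left[\psi_i(\overline{e})\wedge\bigwedge_{j<i}\neg\psi_j(\overline{e})\wedge\phi_i(\overline{d};e_1,\ldots,e_{k_i})\right].$$
This formula is manifestly $\varnothing$-definable and functional by the $\exists!$ clauses, and is surjective onto $D$ by the compactness step.

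The main obstacle, to the extent there is one, is the non-uniformity of parameter arities: different $\overline{d}\in D$ may be defined over tuples from $E$ of different lengths and by different formulas. Compactness reduces us to a finite list, after which the padding and lexicographic choice above uniformize the resulting family at the cost of fixing a single ambient length $k$.
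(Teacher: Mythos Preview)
Your proposal is correct and follows essentially the same approach as the paper: form the partial type expressing ``$\overline{v}\in D$ but no formula with parameters from $E$ isolates $\overline{v}$,'' use compactness to extract finitely many defining formulas, pad to a common parameter length, and patch them together by taking the least index for which the uniqueness clause holds. The paper's proof is nearly identical, differing only in minor presentational choices (it phrases the type clauses as ``$\theta(v;\overline{w})\to\exists^{>1}u\,\theta(u;\overline{w})$'' rather than via the biconditional, and simply says ``without loss of generality'' for the arity uniformization that you carry out explicitly).
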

\begin{proof}
For notational convenience, we assume $m=n=1$; the more general case is identical. Let $\phi(v)$ and $\psi(v)$ be such that $D=\phi(\mathbb{M})$ and $E=\psi(\mathbb{M})$. By hypothesis, the following partial type in $v$ is inconsistent:
$$\{\phi(v)\}\cup\left\{\forall\overline{w}\left[\psi(\overline{w})\wedge\theta(v;\overline{w})\to\exists^{>1}u\theta(u;\overline{w})\right]\right\}_\theta.$$ Thus, by compactness, there exist formulas $\theta_1(v;\overline{w}_1),\dots,\theta_k(v;\overline{w}_k)$ such that, for every element $d\in D$, there exist some $i\in[k]$ and a tuple $\overline{e}\in E^{|\overline{w}_i|}$ with $\theta_i(D;\overline{e})=\{d\}$. Without loss of generality, assume that the $\overline{w}_i$ are all tuples of the same length. For each $i\in[k]$, define a formula $$\delta_i(\overline{w})\equiv\psi(\overline{w})\wedge\exists^{=1}v\theta_i(v;\overline{w})\wedge\forall v\left[\theta_i(v;\overline{w})\to\phi(v)\right].$$ Now, defining the formula $\delta(\overline{w})\equiv\bigvee_{i\leqslant k}\delta_i(\overline{w})$, we get a $\varnothing$-definable surjection $g(\overline{w})$ from $\delta(E^{|\overline{w}|})$ onto $D$ by taking $g(\overline{w})$ to act as $\theta_i$ on each $\delta_i(E^{|\overline{w}|})\setminus\bigcup_{j<i}\delta_j(E^{|\overline{w}|})$.
\end{proof}

We need the following lemma in Section 3:
\begin{lemma}
Suppose $D\subseteq\mathbb{M}^m$ and $E\subseteq\mathbb{M}^n$ are $\varnothing$-definable, with $\dcl D=\dcl E$. Then $D$ is stably embedded if and only if $E$ is stably embedded.
\end{lemma}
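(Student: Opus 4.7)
The plan is to use Lemma C.1 on both directions of the equality $\dcl D = \dcl E$ in order to transport definable subsets of $E^k$ back to definable subsets of $D^l$, apply stable embeddedness there, and then pull the resulting parameters back to tuples from $E$. By symmetry it suffices to prove the forward direction, so assume $D$ is stably embedded and fix an arbitrary $\mathbb{M}$-definable set $X\subseteq E^k$; the goal is to show $X$ is definable over parameters from $E$.

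First, since $E\subseteq\dcl D$, Lemma C.1 gives a $\varnothing$-definable set $G\subseteq D^l$ (for some $l$) and a $\varnothing$-definable surjection $g:G\to E$. Let $g^{(k)}:G^k\to E^k$ denote the componentwise map, and set $Y:=(g^{(k)})^{-1}(X)\subseteq D^{kl}$; this is $\mathbb{M}$-definable because $g$ is $\varnothing$-definable. By stable embeddedness of $D$, $Y$ is definable over some parameter tuple $\overline{d}\in D^j$. Since $g^{(k)}$ is a surjection onto $E^k$ and $X=g^{(k)}(Y)$, the set $X$ is also $\overline{d}$-definable.

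Now apply Lemma C.1 in the other direction: since $D\subseteq\dcl E$, there is a $\varnothing$-definable set $F\subseteq E^{m'}$ and a $\varnothing$-definable surjection $f:F\to D$. For each $i\in[j]$ choose $\overline{e}_i\in F$ with $f(\overline{e}_i)=d_i$; then each $d_i$ is $\overline{e}_i$-definable as the unique image of $\overline{e}_i$ under the $\varnothing$-definable function $f$. Concatenating gives a tuple $\overline{e}:=\overline{e}_1\dots\overline{e}_j$ of elements of $E$ over which $\overline{d}$, and hence $X$, is definable. This shows $X$ is $E$-definable, so $E$ is stably embedded.

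There is no real obstacle here; the proof is a routine diagram chase using Lemma C.1 twice. The only point that requires any care is keeping the arities straight and verifying that the preimage $Y$ is a subset of a Cartesian power of $D$ (so that we may actually apply the stable embeddedness hypothesis on $D$), and that the parameters $\overline{e}_i\in F\subseteq E^{m'}$ pulled back from $D$ genuinely lie in powers of $E$.
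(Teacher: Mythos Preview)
Your proof is correct and follows essentially the same approach as the paper's: pull back the given definable subset along the $\varnothing$-definable surjection supplied by Lemma C.1, apply stable embeddedness to obtain parameters, then convert those parameters using the other inclusion of definable closures. The only cosmetic difference is that you invoke Lemma C.1 a second time for the parameter conversion, whereas the paper simply appeals directly to $E\subseteq\dcl D$ at the end; these amount to the same thing.
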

\begin{proof}
Again suppose $m=n=1$ for convenience, and assume that $E$ is stably embedded. By Lemma C.1, there is a formula $\delta(\overline{w})$ and a $\varnothing$-definable surjection $g(\overline{w})$ from $\delta(E^{|\overline{w}|})$ onto $D$. Let $\xi(\overline{v};\overline{c})$ be any formula. Since $E$ is stably embedded, the set $$X:=\left\{\overline{e}_1\dots\overline{e}_{k}\in\delta(E^{|\overline{w}|})^k:\mathbb{M}\models \xi(g(\overline{e}_1),\dots,g(\overline{e}_{k});\overline{c})\right\}$$ is $E$-definable. Since $g$ is surjective, the image of $X$ under $g$ is precisely $\xi(D^k;\overline{c})$, and so, since $g$ is $\varnothing$-definable, $\xi(D^k;\overline{c})$ is also $E$-definable. Since $E\subseteq\dcl D$, we are done.
\end{proof}


\end{document}